\makeatletter \@addtoreset{equation}{section}
\newtheorem{theorem}{Theorem}[section]
\newtheorem{corollary}[theorem]{Corollary}
\newtheorem{definition}[theorem]{Definition}
\newtheorem{lemma}[theorem]{Lemma}
\newtheorem{proposition}[theorem]{Proposition}
\newtheorem{remark}[theorem]{Remark}
\newtheorem*{theorem*}{Theorem}
\def\pr1{\prod\hskip -2.07ex * \hskip 0.9 ex}
\begin{document}
\noindent
\title{A Strong Version of the Hilbert Nullstellensatz for slice regular  polynomials in several quaternionic variables}
\author{Anna Gori $^1$} 
 \thanks{$^1$ Dipartimento di Matematica - Universit\`a di Milano,                
 Via Saldini 50, 20133  Milano, Italy}
 \author{Giulia Sarfatti $^2$}
 \thanks{ $^2$ DIISM - Universit\`a Politecnica delle Marche,               
 	Via Brecce Bianche 12,  60131, Ancona, Italy}
 \author{Fabio Vlacci $^3$}
 \thanks{$^3$ MIGe  - Universit\`a di Trieste, Piazzale Europa 1, 34100, Trieste, Italy}

 \begin{abstract}
In this paper we prove a strong version of the  Hilbert Nullstellensatz in the ring
$\mathbb H[q_1,\ldots,q_n]$ of slice regular polynomials in several
quaternionic variables. Our proof deeply depends on a detailed
analysis of the common zeros of slice regular polynomials which belong to an ideal in
$\mathbb H[q_1,\ldots,q_n]$. This study motivates the introduction of
a new notion of algebraic set in the quaternionic setting, which
allows us to define a Zariski-type topology on $\mathbb H^n$.


 \end{abstract}
\keywords{Nullstellensatz, quaternionic slice regular polynomials, algebraic sets\\
{\bf MSC:} 30G35, 16S36} 
\maketitle
\noindent

\section*{Acknowledgments}
The authors are partially supported by:
GNSAGA-INdAM via the project ``Hypercomplex function theory and
applications''; the first author is also partially supported by MUR
project PRIN 2022 ``Real and Complex Manifolds: Geometry and
Holomorphic Dynamics'', the second and the third authors are also
partially supported by MUR projects PRIN 2022 ``Interactions between
Geometric Structures and Function Theories'' and Finanziamento
Premiale FOE 2014 ``Splines for accUrate NumeRics: adaptIve models for
Simulation Environments''.

\section{Introduction}

In the usual framework of $\mathbb{C}^n$, the Hilbert Nullstellensatz, 
both in its weak and  (equivalent) strong version,
represents a  highly relevant result in 
Algebraic Geometry; indeed  it establishes a correspondence between subsets of $\mathbb{C}^n$ and ideals in the ring of polynomials in $\mathbb{C}[z_1,\ldots,z_n]$ that paves the way to the 
introduction of
foundational concepts such as the notion of 
algebraic varieties.\\
The major motivation of our research 
is 
the possibility of  introducing a concept of algebraic variety also in the non-commutative
framework, and in particular in the quaternionic setting.
There have been several attempts to formulate  weak and
strong versions of the analog of the Hilbert Nullstellensatz in a non-commutative framework;
among other results in this area, we recall
\cite{BR},
 the contributions 
in \cite{israeliani-1} for  quaternionic polynomials with central
variables, and the ones  in \cite{Nul1} for slice regular
  polynomials in two quaternionic variables.
The main goal of the present paper is to establish a
Nullstellesatz-type theorem for slice regular polynomials in several
quaternionic variables which extends the results proved in \cite{Nul1}.\\
Slice regular  polynomials in $n$ quaternionic variables are polynomial functions
$P:{\mathbb{H}^n}  \to \mathbb{H}$, of the form
	\[	 (q_1,\ldots, q_n)\mapsto P(q_1,\ldots, q_n)=
	\sum_{ \substack{\ell_1=0,\ldots, L_1  \\
	\cdots\\ \ell_n=0,\ldots, L_n} }{q_1}^{\ell_1}\cdots {q_n}^{\ell_n}a_{\ell_1,\ldots,\ell_n} \]
	with $a_{\ell_1,\ldots,\ell_n}\in\mathbb{H}$, 
where $\deg_{{q_{\ell_j}}}P:=L_j$.\\	
These polynomials belong to the class of {\em slice regular} functions in several quaternionic variables, according to Definition 2.5 in \cite{severalvariables}.
The set $\mathbb{H}[q_1,\ldots,q_n]$ of slice regular polynomials in $n$ quaternionic variables 
can be endowed with an
appropriate notion of (non--commutative) product, the so called {\em slice product}, denoted by the symbol $*$; in this way, $(\mathbb{H}[q_1,\ldots,q_n], +,*)$ has the structure of a non-commutative ring.

The set of slice regular polynomials vanishing on a given subset $Z$ of $\mathbb{H}^n$ is not in general an ideal in 
$\mathbb{H}[q_1,\ldots,q_n]$.
In order to  establish a direct link between  subsets  of $\mathbb{H}^n$
and  (right) ideals of $\mathbb{H}[q_1,\ldots,q_n]$,
we  define  $\mathcal{J}(Z)$ to be the  right  ideal 
 generated by
slice regular polynomials in several variables which vanish on the subset $Z$. 
On the other hand  starting from  a right ideal $I$ in $\mathbb{H}[q_1,\ldots,q_n]$, we 
define $\mathcal{V}(I)$ to be the set of common zeros of slice regular polynomials
in $I$. \\
In  \cite{israeliani-1}, the authors consider  the vanishing sets  of quaternionic polynomials with central variables; this means that these sets in $\mathbb{H}^n$
consist only of  points
with commuting components and thus have rather limited geometric properties.
In our setting, 
vanishing sets of slice regular quaternionic  polynomials can also contain points with non--commutative components.
A reasonable approach to study zeros of a polynomial is to begin with the description of zeros of its factors.
 However,
the
vanishing set of a  $*$-product of slice regular polynomials,
is not always immediately deducible from
the vanishing sets of the polynomial factors. 
After some preliminary results, the present paper focuses on two
classes of  vanishing sets  
 which have specific
symmetries and are preserved
 under the $*$-product. \\
\noindent For any $(a_1,\ldots,a_n)\in \mathbb {H}^n$, we define
\[\mathbb S_{(a_1,\ldots,a_n)}=\{(g^{-1}a_1g, \ldots, g^{-1}a_ng) : g \in \mathbb{H}\setminus\{0\} \}\]
to be the set obtained by simultaneously  rotating each coordinate of the point $(a_1,\ldots,a_n)$.
 Any set of this form will be called a {\em spherical set} in analogy with the terminology adopted 
 in the one quaternionic variable setting. 
 In particular, if $(a_1,\ldots,a_n)$ has commuting components, i.e.  $a_la_m=a_ma_l$ for any  $l,m$, then the spherical set will be said an {\em arranged} spherical set. 
Any set of the form $\mathbb S_{(a_1,\cdots,a_k)}\times \{a_{k+1}\} \times \cdots\times \{  a_{n}\}$,  with $\mathbb S_{(a_1,\cdots,a_k)}$ an arranged spherical set and  
$a_la_m=a_ma_l$ for any  $l,m\geq k+1$,
will be called a {\em balloon}.

\noindent In order to prove that balloons and subsets of the form $\mathbb S_{a_1}\times \cdots \times \mathbb S_{a_k}\times \cdots \times \{a_{k+1}\}\times \{a_n\}$ are zero loci of right ideals in $\mathbb H[q_1,\ldots,q_n]$, we provide an explicit description of slice regular polynomials vanishing on these sets.

\noindent Given a right ideal $I\subseteq \mathbb{H}[q_1,\ldots, q_n]$, we  prove that  for any $(a_1,\ldots,a_n)\in
\mathcal V(I)$   appropriate balloons, built starting from $(a_1,\cdots,a_n),$ are contained in $\mathcal{V}(I)$. 
\noindent This is the key ingredient to show that the right ideal $\mathcal{J}(\mathcal{ V }(I))$ coincides
with the set of polynomials vanishing on $\mathcal{ V }(I)$.
\noindent Moreover, after introducing the radical  $\sqrt{I}$ of a right ideal $I$ as the intersection
    of    all completely prime right ideals that contain $I$ (as in \cite{israeliani-1}),
 we prove this strong version of the Hilbert Nullstellensatz in $\mathbb{H}^n$
\begin{theorem*}
Let $I$ be a right ideal in $\mathbb{H}[q_1,\ldots, q_n]$. Then
\[\mathcal{J}(\mathcal{V}(I))=\sqrt{I}.\] 	
\end{theorem*}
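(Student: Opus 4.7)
The inclusion $I\subseteq \mathcal J(\mathcal V(I))$ is immediate, so my plan is to present $\mathcal J(\mathcal V(I))$ as an intersection of completely prime right ideals that all contain $I$, and conversely to show that every completely prime right ideal above $I$ is captured by this presentation. The common building blocks on both sides will be the vanishing ideals of balloons.

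\textbf{Key lemma.} The first step is to prove that for every balloon $B\subseteq \mathbb H^n$, the set $\mathcal J(B)$ is a proper completely prime right ideal of $\mathbb H[q_1,\ldots,q_n]$. The ingredients are the explicit description of slice regular polynomials vanishing on $B$ that is provided earlier in the paper, together with the $*$-product vanishing rule: if $f*g$ vanishes on $B$ but $g$ does not, then the simultaneous rotation symmetry of the balloon forces $f$ to vanish on $B$. This step also verifies that the right ideal generated by polynomials vanishing on $B$ coincides with the full set of polynomials vanishing on $B$.

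\textbf{Easy inclusion $\sqrt I\subseteq \mathcal J(\mathcal V(I))$.} The preliminary result announced in the Introduction assigns to every $x\in \mathcal V(I)$ a balloon $B_x\subseteq \mathcal V(I)$ passing through $x$, whence
\[\mathcal V(I)=\bigcup_{x\in \mathcal V(I)}B_x \qquad \text{and} \qquad \mathcal J(\mathcal V(I))=\bigcap_{x\in \mathcal V(I)}\mathcal J(B_x).\]
Each $\mathcal J(B_x)$ is completely prime by the Key Lemma and contains $I$ since $B_x\subseteq \mathcal V(I)$. Because $\sqrt I$ is defined as the intersection of \emph{all} completely prime right ideals above $I$, intersecting over the subfamily $\{\mathcal J(B_x)\}_{x\in \mathcal V(I)}$ already gives $\sqrt I\subseteq \mathcal J(\mathcal V(I))$.

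\textbf{Hard inclusion and main obstacle.} For the reverse inclusion I would argue contrapositively: given $p\notin \sqrt I$, fix a completely prime right ideal $P\supseteq I$ with $p\notin P$, and produce a point of $\mathcal V(I)$ where $p$ does not vanish. This rests on a structural result---essentially a weak Nullstellensatz in the quaternionic setting---stating that every proper completely prime right ideal $P\subseteq \mathbb H[q_1,\ldots,q_n]$ has the form $\mathcal J(B_P)$ for some balloon $B_P$. Once this is in place, $B_P\subseteq \mathcal V(P)\subseteq \mathcal V(I)$ and $p\notin \mathcal J(B_P)$ yields the desired point. Proving this structural result is the main obstacle: the natural evaluation map $\mathbb H[q_1,\ldots,q_n]\to \mathbb H$ is not a ring homomorphism, so the classical argument identifying maximal ideals with points fails. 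I plan to adapt the central-variable techniques of \cite{israeliani-1}, combined with a Zorn-type reduction to minimal completely prime right ideals containing $I$ and the rotational symmetries that protect zero sets, to produce the required balloon.
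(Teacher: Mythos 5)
There are two genuine gaps, and both trace back to the same missing ingredient. Your ``easy inclusion'' rests on the premise that every $x\in\mathcal V(I)$ lies on a balloon $B_x\subseteq\mathcal V(I)$, so that $\mathcal V(I)=\bigcup_x B_x$ and hence $\mathcal J(\mathcal V(I))=\bigcap_x\mathcal J(B_x)$. That premise is false: the balloons produced by Proposition \ref{caso2} (see Remark \ref{balloon} and Definition \ref{BS}) have the form $\mathbb S_{(\widetilde a_1,\ldots,\widetilde a_t)}\times\{a_{t+1}\}\times\cdots\times\{a_n\}$ with $\widetilde a_l\in\mathbb S_{a_l}$ chosen so that the first $t$ components \emph{commute}; they are only ``built starting from'' $(a_1,\ldots,a_n)$ and do not contain that point whenever its components fail to commute. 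Consequently $\bigcup_x B_x$ is in general a proper subset of $\mathcal V(I)$, and a polynomial vanishing on all the balloons does not obviously vanish at the uncovered points; the assertion that it does is precisely Theorem \ref{Siannullano} of the paper, whose inductive proof on the number of variables is the real work and is absent from your outline. The ``hard inclusion'' has a second problem: the structural claim that every proper completely prime right ideal equals $\mathcal J(B_P)$ for some balloon $B_P$ is false as stated --- the zero ideal is completely prime (the ring has no zero divisors) yet is not the vanishing ideal of any balloon, and the same difficulty arises for any non-maximal completely prime ideal. Even with a corrected version, deducing $p\notin\mathcal J(\mathcal V(I))$ from the existence of a point of $\mathcal V(I)$ where $p$ does not vanish requires knowing that $\mathcal J(\mathcal V(I))$ coincides with the full set of polynomials vanishing on $\mathcal V(I)$, which is again Theorem \ref{Siannullano}.

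For comparison, the paper's route is much shorter: it quotes the already-established identity $\mathcal J(\mathcal V_c(I))=\sqrt I$ from \cite{Nul1} (Theorem 4.10 together with equation (4.2) there), where $\mathcal V_c(I)$ is the part of $\mathcal V(I)$ lying in the spherical cylinder $\bigcup_{J\in\mathbb S}\mathbb C_J^n$, and then reduces the general statement to this one via Corollary \ref{idealiuguali}, i.e.\ $\mathcal J(\mathcal V(I))=\mathcal J(\mathcal V_c(I))$. All the balloon machinery you invoke is deployed there, inside the inductive proof of Theorem \ref{Siannullano}, to show that a polynomial vanishing on $\mathcal V_c(I)$ also vanishes at every point of $\mathcal V(I)$ with non-commuting components. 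If you want to salvage your plan, that single implication is the piece you must actually prove.
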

\noindent 
{After completing the final draft of this paper we became aware that   
the previous result was simultaneously obtained by G. Alon and E. Paran, and then published in  \cite{israeliani-3}.\\
The two proofs were obtained independently, they both rely on the geometric properties of the zero sets of slice regular polynomials. 
The main difference is that we use an inductive argument relying on the existence of one {balloon} in any set of the form $\mathcal V(I)$, while in \cite{israeliani-3} the proof is based on the existence of a finite family of balloons (called {\em embedded spheres} in \cite{israeliani-3}), organized in the structure of a binary tree.

\noindent Let us also mention that G. Alon, A. Chapman and E. Paran further investigate the geometry of sets of the form $\mathcal V(I)$ in \cite{israeliani-4}. 
}

 \noindent
In the last part of the present paper, we introduce the notion of {\em slice algebraic
  sets} in $\mathbb{H}^n$ and show that the family of slice algebraic
sets in $\mathbb H^n$ defines a topology on $\mathbb H^n$ which resembles
the Zariski topology in $\mathbb{C}^n$. 
Thanks to the previously established results on the 
geometric properties of  vanishing sets of slice regular polynomials which belong to the same right ideal $I$
in $\mathbb{H}[q_1,\ldots,q_n]$,  we prove that
 $\mathcal{V}(I)$ is a slice algebraic set.
\\
\noindent The paper is organized as follows: after recalling  in Section 2 some  background results  on ideals of polynomials in
$\mathbb{H}[q_1,\ldots, q_n]$, in Section 3 we investigate the
geometric properties of the vanishing sets of right ideals of slice regular
polynomials in $n$ variables.
In the route to prove the main results of this section, we give an explicit description of slice regular polynomials vanishing on two types of subsets with spherical symmetry.
\\
In Section 4  a proof of a strong version of
 the Hilbert Nullstellensatz in $\mathbb{H}^n$ is given.
 Finally, {\em slice algebraic
  sets} in $\mathbb{H}^n$ are introduced and proved  to define 
a Zarisky--type topology in $\mathbb H^n$. 
\section{Introduction to quaternionic slice regular polynomials}\label{sec1}
Let $\mathbb{H}{=\mathbb{R}+i\mathbb{R}+j\mathbb{R}+k\mathbb{R}}$ denote the skew field of
quaternions and let $\mathbb{S}=\{q \in \mathbb{H} \ : \ q^2=-1\}$ be
the two dimensional sphere of quaternionic imaginary units.  Then
\[ \mathbb{H}=\bigcup_{J\in \mathbb{S}}(\mathbb{R}+\mathbb{R} J),  
\]
where the ``slice'' $\mathbb{C}_J:=\mathbb{R}+\mathbb{R} J$ can be
identified with the complex plane $\mathbb{C}$ for any
$J\in\mathbb{S}$. In this way, any $q\in \mathbb{H}$ can be
  expressed as $q=x+yJ$ with $x,y \in \mathbb{R}$ and $J \in
  \mathbb{S}$. The {\it real part }of $q$ is ${\rm Re}(q)=x$ and its
         {\it imaginary part} is ${\rm Im}(q)=yJ$; the {\it conjugate}
         of $q$ is $\bar q:={\rm Re}(q)-{\rm Im}(q)$.  For any non-real
         quaternion $a\in \mathbb{H}\setminus \mathbb{R}$ we will
         denote by $J_a:=\frac{{\rm Im}(a)}{|{\rm Im}(a)|}\in
         \mathbb{S}$ and by $\mathbb{S}_a:=\{{\rm Re}(a)+J |{\rm
           Im}(a)| \ : \ J\in \mathbb{S}\}$. If $a\in \mathbb{R}$,
         then $J_a$ is any imaginary unit, and $\mathbb S_a=\{a\}.$ For any $a\in \mathbb{H}$, $C_a$ will denote the set $C_a=\{q\in \mathbb{H} \ : \ qa=aq\}.$

\noindent The central object of the present paper is the class of slice regular quaternionic polynomial functions 
	$P:{\mathbb{H}^n}  \to \mathbb{H}$,
	\[	 (q_1,\ldots, q_n)\mapsto P(q_1,\ldots, q_n)=
	\sum_{ \substack{\ell_1=0,\ldots, L_1  \\
	\cdots\\ \ell_n=0,\ldots, L_n} }{q_1}^{\ell_1}\cdots {q_n}^{\ell_n}a_{\ell_1,\ldots,\ell_n} \]
	with $a_{\ell_1,\ldots,\ell_n}\in\mathbb{H}$, 
where $\deg_{{q_{\ell_j}}}P:=L_j$.\\	
\noindent
These polynomial functions are  examples of {\it slice regular functions} on $\mathbb{H}^n$. %
When considering functions of several quaternionic variables, the
definition of slice regularity 
relies on
the notion of {\em stem functions}. The formulation of the theory in several quaternionic variables 
(in the more general setting of real alternative $*-$algebras)
can be found in \cite{severalvariables}.

\noindent {In one quaternionic variable (under some hypothesis on the domain) this class of functions coincides with functions such that their restriction to any {\em slice} $\mathbb C_I$ is in the kernel of corresponding Cauchy-Riemann operator \[\overline{\partial_I}:=\frac{1}{2}\left(\frac{\partial}{\partial x} +I\frac{\partial}{\partial y}\right).\]
For an updated survey on the theory in one quaternionic variable we refer to the book \cite{libroGSS}. }

%
{In analogy with the Splitting Lemma for slice regular functions in one quaternionic variable (see \cite[Lemma 1.3]{libroGSS}), for slice regular polynomial functions in several quaternionc variables  we
have the following
\begin{lemma}\label{splitting}
Let $P$
	be a slice regular polynomial in $n$ variables. For   any $K\in \mathbb S$ and for any $L\in\mathbb S$ orthogonal to $K$ (with respect to the standard scalar product in $\mathbb{R}^3$), there exist two complex polynomials $F,G:\mathbb C_K^{n}\to \mathbb C_K^{n} $ such that for any $(z_1,\ldots,z_n)\in \mathbb C_K^n$
	\begin{equation}\label{split}
		P(z_1,\ldots,z_n)=F(z_1,\ldots,z_n)+G(z_1,\ldots,z_n)L.
	\end{equation}
	
\end{lemma}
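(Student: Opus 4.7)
The plan is to adapt the one-variable Splitting Lemma (\cite[Lemma 1.3]{libroGSS}) to the multivariable setting by decomposing each coefficient of $P$ in the basis $\{1,L\}$ over $\mathbb{C}_K$ and then exploiting the commutativity of $\mathbb{C}_K$. The crucial algebraic fact I would invoke is: since $K,L\in\mathbb{S}$ are orthogonal (so that $KL=-LK$), the set $\{1,K,L,KL\}$ is an $\mathbb{R}$-basis of $\mathbb{H}$. Equivalently, every $a\in\mathbb{H}$ admits a unique decomposition $a=\alpha+\beta L$ with $\alpha,\beta\in\mathbb{C}_K=\mathbb{R}+\mathbb{R}K$.

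First, I would apply this decomposition coefficient-wise to $P$: write
\[
a_{\ell_1,\ldots,\ell_n}=\alpha_{\ell_1,\ldots,\ell_n}+\beta_{\ell_1,\ldots,\ell_n}L
\]
with $\alpha_{\ell_1,\ldots,\ell_n},\beta_{\ell_1,\ldots,\ell_n}\in\mathbb{C}_K$. Next, I would observe that when the polynomial is evaluated at $(z_1,\ldots,z_n)\in\mathbb{C}_K^n$, every monomial $z_1^{\ell_1}\cdots z_n^{\ell_n}$ lies in $\mathbb{C}_K$, because $\mathbb{C}_K$ is a commutative subfield of $\mathbb{H}$ isomorphic to $\mathbb{C}$, hence closed under products. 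This is the only place where the hypothesis $(z_1,\ldots,z_n)\in\mathbb{C}_K^n$ is actually used.

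With these two ingredients in place, substituting the decomposition into the sum defining $P$ and keeping $L$ on the right gives, term by term,
\[
z_1^{\ell_1}\cdots z_n^{\ell_n}a_{\ell_1,\ldots,\ell_n}=\bigl(z_1^{\ell_1}\cdots z_n^{\ell_n}\alpha_{\ell_1,\ldots,\ell_n}\bigr)+\bigl(z_1^{\ell_1}\cdots z_n^{\ell_n}\beta_{\ell_1,\ldots,\ell_n}\bigr)L,
\]
where both parenthesized quantities lie in $\mathbb{C}_K$. Collecting terms, I would then define
\[
F(z_1,\ldots,z_n):=\sum z_1^{\ell_1}\cdots z_n^{\ell_n}\alpha_{\ell_1,\ldots,\ell_n},\qquad G(z_1,\ldots,z_n):=\sum z_1^{\ell_1}\cdots z_n^{\ell_n}\beta_{\ell_1,\ldots,\ell_n},
\]
which are complex polynomials in the sense that, after identifying $\mathbb{C}_K$ with $\mathbb{C}$, they are ordinary polynomials in $n$ complex variables. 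The identity \eqref{split} then follows by linearity.

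The only potential pitfall is bookkeeping with non-commutativity: since $K$ and $L$ anti-commute, one has $\beta L\neq L\beta$ for generic $\beta\in\mathbb{C}_K$, so one cannot move $L$ freely through the coefficients. However, because in the definition of $P$ the coefficient is written to the right of the monomial and the monomial itself lies in $\mathbb{C}_K$, the $L$ factor remains on the right throughout and no reordering issue arises. Thus there is no substantial obstacle, and the proof reduces essentially to linear algebra over $\mathbb{C}_K$.
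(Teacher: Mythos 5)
Your proposal is correct and follows essentially the same route as the paper: decompose each coefficient as $a_{\ell_1,\ldots,\ell_n}=\alpha_{\ell_1,\ldots,\ell_n}+\beta_{\ell_1,\ldots,\ell_n}L$ using the basis $\{1,K,L,KL\}$ of $\mathbb H$ over $\mathbb R$, and collect the two resulting sums into $F$ and $G$. Your added remark on why the right-placement of the coefficients avoids any anti-commutation issue is a useful clarification but does not change the argument.
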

\begin{proof}
	Let $P(q_1,\ldots,q_n)=\sum_{\ell_1,\ldots,\ell_n}q_1^{\ell_1}\cdots q_n^{\ell_n}a_{\ell_1,\ldots,\ell_n}$ and let $K,L \in \mathbb S$ be two orthogonal imaginary units. 
	Then $\{1, K, L, KL\}$ is a basis of $\mathbb H$ over $\mathbb R$ and hence we can write any coefficient of $P$ as
	\[a_{\ell_1,\ldots,\ell_n}=\alpha_{\ell_1,\ldots,\ell_n}+\beta_{\ell_1,\ldots,\ell_n}L\]
	with $\alpha_{\ell_1,\ldots,\ell_n},\beta_{\ell_1,\ldots,\ell_n} \in \mathbb C_K$.
	Setting
	\[F(z_1,\ldots,z_n)=\sum_{\ell_1,\ldots,\ell_n}z_1^{\ell_1}\cdots z_n^{\ell_n}\alpha_{\ell_1,\ldots,\ell_n}, \ \ \text{ and }  \ \
	G(z_1,\ldots, z_n)=\sum_{\ell_1,\ldots,\ell_n}z_1^{\ell_1}\cdots z_n^{\ell_n}\beta_{\ell_1,\ldots,\ell_n}\] we get the claim.
\end{proof}

}

%

Slice regular polynomial functions of several variables can be endowed with an
appropriate notion of product, the so called {\em slice product}, that
will be denoted by the symbol $*$.
Let us recall here how it works for
slice regular polynomials in two variables.
\begin{definition}
	If $P(q_1,q_2)=\sum_{ \substack{n=0,\ldots,N_1  \\ m=0,\ldots,N_2} } q_1^nq_2^m a_{n,m}$ and
        $Q(q)=\sum_{ \substack{n=0,\ldots,L_1  \\ m=0,\ldots,L_2} }q_1^nq_2^m b_{n,m}$ are two
        slice regular polynomials,
        then the {\em $*$-product} of $P$ and $Q$ is the slice regular polynomial
        defined by
$$
P*Q (q_1,q_2):=\sum_{ \substack{n=0,\ldots,N_1+L_1  \\ m=0,\ldots,N_2+L_2}}q_1^nq_2^m\sum_{ \substack{r=0,\ldots,n  \\ s=0,\ldots,m} }a_{r,s} b_{n-r,m-s}
$$
\end{definition}
\noindent For example, if $a,b \in \mathbb{H}$, then
\begin{itemize}
	\item $q_1*q_2=q_2*q_1=q_1q_2$;
	\item $a*(q_1q_2)=(q_1q_2)*a=q_1q_2a$;
	\item $(q_1^nq_2^ma)*(q_1^rq_2^sb)=q_1^{n+r}q_2^{m+s}ab$.
\end{itemize} 	
\noindent Moreover we point out that, if $P$ or $Q$ have real coefficients, then $P*Q=Q*P$.

\noindent{In the one variable case, the $*$-product of two slice-regular polynomials $P$ and $Q$ can be expressed as follows (see \cite[Theorem 3.4]{libroGSS}):
	\begin{equation}\label{PQ}
	P* Q(q)=\left\{\begin{array}{c cl}
	0 & &\text{if $P(q)=0 $}\\
	P(q)\cdot Q(P(q)^{-1}\cdot q \cdot P(q)) & &\text{if $P(q)\neq 0 $\,,}
	\end{array}\right.
	\end{equation}
where the symbol $\cdot$ denotes the standard product in $\mathbb H$.}
	\noindent
        Notice that $P(q)^{-1}\cdot q\cdot P(q)$ belongs to the 
        sphere $\mathbb{S}_q$. Hence each zero of $P*Q$ in
        $\mathbb{S}_q$ is given either by a zero of $P$ or by a point
        which is a conjugate of a zero of $Q$ in the same sphere.

\noindent Observe that 
(\ref{PQ}) shows that
the point-wise evaluation of slice regular polynomials 
is not a multiplicative homomorphism.
Moreover, the vanishing locus of the $*$-product of two slice regular polynomials is not in general the union of the zeros of each of the factors.

\noindent For slice regular polynomials in several variables, the situation is even more complicated; in general, the zeros of $P$ are not necessarily zeros of $P*Q$. For instance, while the polynomial $q_1-i$ vanishes on $\{i\}\times \mathbb{H}$,
    the polynomial $q_1q_2-q_1j-q_2i+k$ obtained by $*$--multiplication of  $(q_1-i)$ times $(q_2-j)$, vanishes on the pair $(i, q_2)$ if and only if  $q_2\in\mathbb{C}_i$.

\noindent A first result in the direction of describing the zero locus of slice regular polynomials in several variables, is the following (see \cite[Proposition 3.6]{Nul1}).	 
	\begin{proposition}\label{mlineare}
	Let $P$
	be a slice regular polynomial in $n$ variables and let $1\le m \le n$. Then $P$ vanishes on $\mathbb{H}^{m-1}\times\{a\}\times (C_a)^{n-m}$
	if and only if there exists $P_m\in\mathbb{H}[q_1, \ldots, q_n]$ such that 
	\[P(q_1, \ldots, q_n)=(q_m-a)*P_m(q_1, \ldots, q_n).\]	
\end{proposition}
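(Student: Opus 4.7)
The plan is to prove both directions by exploiting an appropriate $*$-division by $(q_m-a)$ in the variable $q_m$.

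For the easy ($\Leftarrow$) direction, I would fix an arbitrary $P_m=\sum_\ell q_1^{\ell_1}\cdots q_n^{\ell_n} b_\ell$ and compute the coefficients of $(q_m-a)*P_m$ directly from the convolution formula defining the $*$-product: the resulting coefficient at multi-index $\ell$ is $b_{(\ell_1,\dots,\ell_m-1,\dots,\ell_n)}-a\,b_\ell$ (with the convention that terms with a negative index are zero). Evaluating this at $(q_1,\ldots,q_{m-1},a,q_{m+1},\ldots,q_n)$ with $q_{m+1},\ldots,q_n\in C_a$, I would use that $a$ commutes with each $q_j$ for $j\ge m+1$ in order to slide the factor $a$ past $q_{m+1}^{\ell_{m+1}}\cdots q_n^{\ell_n}$; after a shift of index $\ell_m\mapsto \ell_m+1$ in the telescoping sum, the two pieces cancel term by term.

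For the main ($\Rightarrow$) direction, I would perform Euclidean-style division on the left by $(q_m-a)$ through the $*$-product: if the $q_m$-degree of $P$ is $N$, subtracting $(q_m-a)*P_m^{(1)}$, where $P_m^{(1)}$ collects the top-$q_m$-degree terms of $P$ with $q_m^N$ replaced by $q_m^{N-1}$, cancels the $q_m^N$ part; iterating yields a decomposition
\[
P=(q_m-a)*P_m+R,
\]
where $R\in\mathbb H[q_1,\ldots,q_{m-1},q_{m+1},\ldots,q_n]$ does not contain the variable $q_m$. By the backward direction just proved, the $*$-product term vanishes on $\mathbb H^{m-1}\times\{a\}\times (C_a)^{n-m}$, so $R$ vanishes there too; since $R$ is independent of $q_m$, this means that $R$ vanishes identically on $\mathbb H^{m-1}\times (C_a)^{n-m}$.

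It then remains to upgrade this vanishing to $R\equiv 0$, which is the only subtle point. Writing $R=\sum_{\ell_1,\dots,\ell_{m-1}} q_1^{\ell_1}\cdots q_{m-1}^{\ell_{m-1}} R_{\ell_1,\dots,\ell_{m-1}}(q_{m+1},\ldots,q_n)$, I fix an arbitrary $(q_{m+1},\ldots,q_n)\in (C_a)^{n-m}$; then the polynomial $(q_1,\ldots,q_{m-1})\mapsto R$ vanishes on all of $\mathbb H^{m-1}$, so restricting to $\mathbb R^{m-1}$ and using linear independence of the real monomials over $\mathbb H$ forces each coefficient $R_{\ell_1,\dots,\ell_{m-1}}$ to vanish on $(C_a)^{n-m}$. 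An induction on the number of remaining variables, applying the one-variable identity principle (a slice regular polynomial in one variable vanishing on the slice $C_a=\mathbb C_{J_a}$ is identically zero, via the Splitting Lemma \ref{splitting}) at each step to eliminate one $q_j$ with $j\ge m+1$, shows that each $R_{\ell_1,\dots,\ell_{m-1}}$ is identically zero, hence $R\equiv 0$ and $P=(q_m-a)*P_m$. The main obstacle is precisely this last identity-principle-type step, since the set $\mathbb H^{m-1}\times (C_a)^{n-m}$ is only a proper slice-subset in the last $n-m$ coordinates and one has to be careful to stratify correctly across variables.
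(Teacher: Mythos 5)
Your proof is correct: the backward direction's coefficient computation (where membership of $q_{m+1},\ldots,q_n$ in $C_a$ is exactly what lets $a$ slide past the trailing variables) and the forward direction's Euclidean division by the monic polynomial $q_m-a$ followed by killing the $q_m$-independent remainder via restriction to real values and the slice-wise identity principle are all sound. The paper itself does not prove this proposition but imports it from \cite[Proposition 3.6]{Nul1}; your argument is the standard one and coincides in method with what the paper does for its analogous statements (e.g.\ Proposition \ref{prodottisfere} and Lemma \ref{gradouno}), so there is no substantive divergence to report.
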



	
\noindent In the one variable case, the structure of the zero set of a slice regular polynomial (more in general a slice regular function) is completely known. Besides at isolated points, slice regular polynomials can also vanish on two
dimensional spheres. As an example, the polynomial $q^2+1$ vanishes on
the entire sphere of imaginary units $\mathbb{S}$. More in general, for any $a\in \mathbb H$, the quadratic polynomial with real coefficients
\[S_a(q)=(q-a)*(q-\bar{a})=q^2-2{\rm Re}(a)q+|a|^2\]
vanishes on the sphere $\mathbb{S}_a$. The polynomial $S_a(q)$ is the symmetrized polynomial $(q-a)^s$, see \cite[Definition 1.48]{libroGSS}. 

\noindent Furthermore, zeros of slice regular polynomials in one variable, including spherical ones, cannot accumulate.
\begin{theorem}\cite[Theorem 3.13]{libroGSS}
	Let $P$ be a slice regular polynomial in one variable. If $P$ does not vanish identically, then its zero
        set consists of isolated points or isolated $2$-spheres of the
        form $x +y \mathbb{S}$ with $x,y \in \mathbb{R}$, $y \neq 0$.
\end{theorem}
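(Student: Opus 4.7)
My plan is to combine the one-variable Splitting Lemma (the direct analog of Lemma 2.1) with the Representation Formula for slice regular functions in one quaternionic variable. Fix $K \in \mathbb{S}$ and an orthogonal $L \in \mathbb{S}$; the Splitting Lemma provides complex polynomials $F, G : \mathbb{C}_K \to \mathbb{C}_K$ such that $P(z) = F(z) + G(z) L$ for every $z \in \mathbb{C}_K$. Since $\mathbb{H}$ splits as the right $\mathbb{C}_K$-module direct sum $\mathbb{C}_K \oplus \mathbb{C}_K L$, the condition $P(z) = 0$ on the slice is equivalent to $F(z) = G(z) = 0$. Classical one-variable complex analysis then yields a dichotomy: either $F$ and $G$ vanish identically on $\mathbb{C}_K$, in which case $P \equiv 0$ on the slice and hence on all of $\mathbb{H}$ (by the Representation Formula, which reconstructs $P$ on any sphere from its values on $\mathbb{C}_K$), or the common zero set of $F$ and $G$ is finite, so $P$ has only finitely many zeros on $\mathbb{C}_K$.

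I would then analyze a single non-real sphere $x + y\mathbb{S}$ and show that the zero set of $P$ meets such a sphere either in a single point or in all of it. Assume $P$ vanishes at two distinct points $x + yJ_1, x + yJ_2$ of the sphere. Applying the Representation Formula with base slice $\mathbb{C}_{J_1}$ gives
\[
0 = P(x + yJ_2) = \tfrac{1}{2}(1 + J_2 J_1)\, P(x - yJ_1).
\]
Either $J_2 = -J_1$, in which case $P(x - yJ_1) = P(x + yJ_2) = 0$ immediately, or $J_2 \neq \pm J_1$, in which case the imaginary part of $J_2 J_1$ (the cross product $J_2 \times J_1$) is nonzero, so $1 + J_2 J_1 \neq 0$ and again $P(x - yJ_1) = 0$. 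Feeding both $P(x \pm yJ_1) = 0$ back into the Representation Formula forces $P(x + yJ) = 0$ for every $J \in \mathbb{S}$, so the entire sphere belongs to the zero set.

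Finally, everything assembles by bookkeeping. Every non-real sphere $x + y\mathbb{S}$ meets $\mathbb{C}_K$ in exactly the two points $x \pm yK$, so the projection $q \mapsto (\mathrm{Re}(q), |\mathrm{Im}(q)|)$ sends the zero set of $P$ into a finite subset of $\mathbb{R} \times \mathbb{R}_{\geq 0}$; therefore only finitely many spheres contribute zeros, and on each the zero locus is either a single (isolated) point or the full sphere, while the finitely many real zeros on $\mathbb{R} \subset \mathbb{C}_K$ are automatically isolated. The main obstacle is the spherical-vanishing argument in the middle step, where the Representation Formula must be manipulated carefully to rule out a second non-spherical zero; the dichotomy via the scalar $1 + J_2 J_1$ is the crux. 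An alternative inductive proof on $\deg P$ (factoring $P = (q-a) * Q$ or $P = S_a * Q$ and using the evaluation identity~(\ref{PQ})) would also work, but requires more algebraic preparation.
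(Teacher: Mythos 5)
The paper does not actually prove this statement---it imports it verbatim from \cite[Theorem 3.13]{libroGSS}---so there is no in-paper argument to compare yours against; I am judging your proof on its own. Your first two steps are sound: the one-variable Splitting Lemma plus the Representation Formula give the dichotomy ``either $P\equiv 0$ or $P$ has finitely many zeros on $\mathbb{C}_K$'', and your analysis of the scalar $\tfrac{1}{2}(1+J_2J_1)$ (which vanishes precisely when $J_2=J_1$) correctly shows that two distinct zeros on a sphere $x+y\mathbb{S}$ force $P$ to vanish on the whole sphere, so each sphere meets the zero set in at most one point or in its entirety.

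The gap is in the final bookkeeping step. You claim that, since each sphere $x+y\mathbb{S}$ meets $\mathbb{C}_K$ in exactly the two points $x\pm yK$ and $P$ has finitely many zeros on $\mathbb{C}_K$, only finitely many spheres can contain zeros of $P$. This inference fails: a zero of $P$ lying on $x+y\mathbb{S}$ does not force $P$ to vanish at either of $x\pm yK$. The Representation Formula only yields the linear relation $0=\tfrac{1-JK}{2}P(x+yK)+\tfrac{1+JK}{2}P(x-yK)$ between the two slice values, not the vanishing of either one. Concretely, for $P(q)=q-i$ and $K=j$ the zero set is $\{i\}$, which lies on the sphere $0+1\cdot\mathbb{S}$, yet $P(\pm j)\neq 0$; read literally with this choice of $K$, your projection argument would conclude that no sphere contributes any zero at all. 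The standard repair is to pass to the symmetrized polynomial $P^s=P*P^c$ (instances of which, namely $S_a=(q-a)^s$, already appear in the paper): $P^s$ is a nonzero polynomial with real coefficients, hence maps each slice $\mathbb{C}_K$ to itself and has a circular zero set containing the zero set of $P$; since $P^s|_{\mathbb{C}_K}$ is a nonzero complex polynomial with finitely many zeros, the zero set of $P$ is contained in finitely many real points and spheres, and combining this with your middle step gives the claimed isolation. Alternatively, the inductive factorization route you mention at the end, via $P=(q-a)*Q$ and the evaluation formula \eqref{PQ}, also closes the gap.
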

\noindent 


\medskip 

\noindent In the sequel we will denote by $\mathbb{H}[q_1,\ldots,q_n]$ the set of slice regular
polynomials in $n$ quaternionic variables. Since the $*$-product is associative but not commutative, $(\mathbb{H}[q_1,\ldots,q_n], +, *)$ is
a non-commutative ring (without zero divisors). \\
\noindent	A subset $ I$ of $\mathbb{H}[q_1,\ldots,q_n]$, closed under addition, is called
	\begin{itemize}
		\item a {\em left ideal} if for any $P\in\mathbb{H}[q_1,\ldots,q_n]$, $P* I=\{P*Q \ : \ Q \in  I\} \subseteq  I$;
		\item a {\em right ideal} if for any $P\in\mathbb{H}[q_1,\ldots,q_n]$, $I*P=\{Q*P \ : \ Q \in I\} \subseteq  I$;
		\item a {\em two-sided ideal} if $ I$ is both a left and a right ideal. 
	\end{itemize}

\noindent We are interested in studying the set of common zeros of slice regular polynomials which belong to a right ideal in
 $\mathbb{H}[q_1,\ldots,q_n]$.

\begin{definition}
	
	\noindent  Given a right ideal $I$ in $\mathbb{H}[q_1,\ldots,q_n]$,
	we define $\mathcal{V}(I)$ to be the set of common zeros of $P\in I$, i.e.,
	if $Z_P\subset \mathbb{H}^n$ represents the zero set of a slice regular  polynomial $P\in I$, then 
	\[ \mathcal{V}(I):=\bigcap_{P\in I} Z_P.\]
	
	\noindent Furthermore, we set
	
	\[ \mathcal{V}_c(I):=\mathcal{V}(I)\cap\bigcup_{J\in \mathbb{S}}(\mathbb{C}_J)^n.\]

\end{definition}
\noindent Notice that $\mathcal V_c(I)$  is contained in $\mathcal{V}(I)$. 
As in \cite{Nul1}, given  $(a_1,\ldots,a_n)\in \mathbb{H}^n$, we set $E_{(a_1,\ldots,a_n)}:=\{ P\in\mathbb{H}[q_1,\ldots,q_n] \ : \ P(a_1,\ldots,a_n)=0\}$.
Observe that a point $(a_1,\ldots,a_n)\in \mathbb{H}^n$ belongs to $\mathcal{ V }(I)$ if
and only if $I\subseteq E_{(a_1,\ldots,a_n)}$.
\noindent  Since the set of slice regular polynomials vanishing on a given subset $Z$ of $\mathbb{H}^n$ is not in general a right ideal 
(see \cite[Proposition 3.11]{Nul1}), it becomes natural to give the following.
\begin{definition}\label{idealinuovi}
	Let $Z$ be a non-empty subset of $\mathbb{H}^n$.
	\noindent We denote by
	$\mathcal{J}(Z)$  the right ideal generated in $\mathbb{H}[q_1,\ldots,q_n]$
	by slice regular polynomials which vanish on $Z$,
	\[\mathcal{J}(Z):=\left\{\sum_{k=1}^N P_k*Q_k\ :\ P_k,Q_k\in\mathbb{H}[q_1,\ldots,q_n]\ {\rm with}\ {P_k}_{|_Z}\equiv 0\right\}.\] 
\end{definition}
\noindent Recalling Example 4.7 in \cite{Nul1}, in general 
$\mathcal{J}(Z)$ does not coincide with the set of polynomials vanishing on $Z$, but if $Z=\mathcal{ V }_c(I)$ where $I$ is a right ideal, then
\begin{equation}\label{jvci}
	\mathcal J(\mathcal V_c(I))=\{P\in \mathbb H[q_1,\ldots,q_n] : \ P_{|_{\mathcal V_c(I)}} = 0 \}.
	\end{equation}
The notion of radical of an ideal as introduced in \cite{israeliani-1}
can be defined also in $\mathbb H[q_1,\ldots,q_n]$.



\begin{definition}
	Let $I$ be a right ideal in $\mathbb{H}[q_1,\ldots,q_n]$. The
        {\em right radical} $\sqrt{I}$ of $I$ is the intersection of
        all completely prime right ideals that contain $I$, where a
        right ideal $L$ in $\mathbb{H}[q_1,\ldots,q_n]$ is {\em
          completely prime} if for any $P,Q \in
        \mathbb{H}[q_1,\ldots,q_n]$ such that $P*Q \in L$ and
        $P*L\subseteq L$ we have that $P\in L$ or $Q\in L$.
\end{definition}

\section{Vanishing sets of slice regular polynomials}

{In this section we identify two types of subsets of $\mathbb H^n$ (with some spherical symmetry) which we prove  in Propositions \ref{idealprod} and \ref{idealcinese} to be vanishing loci of
a right ideal in $\mathbb{H}[q_1. \ldots. q_n]$.
In the route to prove these results, in Proposition \ref{Scom} and in Corollary \ref{formacinese} we give an explicit description of slice regular polynomials vanishing on such subsets.\\
Finally, in Propositions \ref{caso1} and \ref{caso2},
we investigate how, starting from a zero set of a right ideal, it is possible to enlarge it finding new zeros of slice regular polynomials in such ideal.\\ }
Let us begin with a general fact on the $*$-product of slice regular polynomials in several variables. We will adopt the following notation:
if $\textbf{q}=(q_1,\ldots,q_n)\in \mathbb H^n$ and $\ell=(\ell_1,\ldots,\ell_n)\in \mathbb N^n$,
then 
$\textbf{q}^\ell=q_1^{\ell_1}q_2^{\ell_2}\cdots q_n^{\ell_n}$.

\begin{proposition}\label{prodstar}
	Let ${  \textbf{a}}=(a_1,\ldots,a_n)\in \mathbb H^n$ be
	 such that 
$a_la_m=a_ma_l$ for any $1\leq l,m\leq n$
	and let $P,Q \in \mathbb H[q_1,\ldots,q_n]$. Then
	\[P*Q(\textbf{a})=\left\{\begin{array}{lr}
		0 & \text {if}\ P(\textbf{a})=0\\
		P(\textbf{a})\cdot Q(P(\textbf{a})^{-1}a_1P(\textbf{a}), P(\textbf{a})^{-1}a_2P(\textbf{a}),\ldots,P(\textbf{a})^{-1}a_nP(\textbf{a})) & \text{if}\ P(\textbf{a})\neq0
	\end{array}\right.\] 	
\end{proposition}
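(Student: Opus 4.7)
My plan is to mimic the proof of the one-variable formula (\ref{PQ}) by direct computation, with the pairwise commutativity $a_la_p=a_pa_l$ replacing what was automatic in dimension one. Writing $P(\textbf{q}) = \sum_\ell \textbf{q}^\ell \alpha_\ell$ and $Q(\textbf{q}) = \sum_m \textbf{q}^m \beta_m$ with multi-indices $\ell,m\in\mathbb{N}^n$ and coefficients $\alpha_\ell,\beta_m\in\mathbb{H}$, the convolution description of $*$ (as spelled out in the two-variable case in the text) gives
\[
P*Q(\textbf{q}) \;=\; \sum_{N}\textbf{q}^N\sum_{\ell+m=N}\alpha_\ell\beta_m.
\]
Evaluating at $\textbf{a}$ and reindexing yields $P*Q(\textbf{a})=\sum_{\ell,m}\textbf{a}^{\ell+m}\alpha_\ell\beta_m$.

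The commutativity of the components of $\textbf{a}$ now enters in two places. First, it makes $\textbf{a}^\ell=a_1^{\ell_1}\cdots a_n^{\ell_n}$ independent of the order of the factors, and gives the identity $\textbf{a}^{\ell+m}=\textbf{a}^\ell\textbf{a}^m=\textbf{a}^m\textbf{a}^\ell$ in $\mathbb{H}$. This lets me move $\textbf{a}^m$ past $\textbf{a}^\ell$ in each summand and collect, producing a copy of $P(\textbf{a})$ in the middle:
\[
P*Q(\textbf{a}) \;=\; \sum_m \textbf{a}^m \Bigl(\sum_\ell \textbf{a}^\ell \alpha_\ell\Bigr) \beta_m \;=\; \sum_m \textbf{a}^m\, P(\textbf{a})\, \beta_m.
\]
The case $P(\textbf{a})=0$ is then immediate.

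For the case $p:=P(\textbf{a})\neq 0$, inserting $p\,p^{-1}$ to the left of each $\textbf{a}^m$ and factoring $p$ out on the left gives $P*Q(\textbf{a})=p\cdot\sum_m(p^{-1}\textbf{a}^m p)\,\beta_m$. Setting $a'_i:=p^{-1}a_i p$ and $\textbf{a}':=(a'_1,\ldots,a'_n)$, I would finish by verifying the telescoping identity $p^{-1}\textbf{a}^m p=(a'_1)^{m_1}\cdots(a'_n)^{m_n}=(\textbf{a}')^m$. This is where commutativity is used a second time: $a'_ia'_j=p^{-1}a_ia_jp=p^{-1}a_ja_ip=a'_ja'_i$, so the $a'_i$ commute among themselves and their powers can be grouped in any order. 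Substituting produces exactly $P*Q(\textbf{a})=P(\textbf{a})\cdot Q(\textbf{a}')$, as claimed. The entire argument is essentially bookkeeping with multi-indices; I do not foresee any substantive obstacle, only the need to be careful that commutativity is invoked at both the $\{a_i\}$ and the $\{a'_i\}$ steps.
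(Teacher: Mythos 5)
Your proposal is correct and follows essentially the same route as the paper: both reduce $P*Q(\textbf{a})$ to $\sum_m \textbf{a}^m P(\textbf{a})\beta_m$ using the commutativity of the components of $\textbf{a}$, and then insert $P(\textbf{a})P(\textbf{a})^{-1}$ to recognize $P(\textbf{a})\cdot Q(\textbf{a}')$. The only difference is that you spell out the intermediate identity in more detail (and note, inessentially, that the conjugated components still commute, which the telescoping insertion of $pp^{-1}$ does not actually require).
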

\begin{proof}
 If
  $P(\textbf{q})=\sum_\ell \textbf{q}^\ell c_\ell$
  and $Q(\textbf{q})=\sum_\ell \textbf{q}^\ell b_\ell$,
 from the definition of $*$-product, we can write
	\[P*Q(\textbf{\em a})=\sum_\ell\textbf{\em a}^\ell P(\textbf{\em a})b_\ell.\]
	If $P({\textbf{\em a}})=0$, then $P*Q(\textbf{\em a})=0$. Otherwise, we
	can also write
	\[\begin{array}{ccl}
		P*Q(\textbf{\em a})
		&=&P(\textbf{\em a})\sum_\ell P(\textbf{\em a})^{-1}a_1^{\ell_1}P(\textbf{\em a})P(\textbf{\em a})^{-1}a_2^{\ell_2}P(\textbf{\em a})\cdots P(\textbf{\em a})^{-1}a_n^{\ell_n}P(\textbf{\em a})b_\ell
	\end{array}
	\]
	that is
	\[P*Q(\textbf{\em a})=P(\textbf{\em a})\cdot Q(P(\textbf{\em a})^{-1}a_1P(\textbf{\em a}), P(\textbf{\em a})^{-1}a_2P(\textbf{\em a}),\ldots,P(\textbf{\em a})^{-1}a_nP(\textbf{\em a})).\]
\end{proof}
\noindent The previous result can be regarded as a version of
Formula (\ref{PQ}) in the several variable case. Notice that it can be also generalized to the case of converging power series.\\
\subsection{Slice regular polynomials which vanish on products of spheres}

 We start with a preliminary result concerning Euclidean division of polynomials.
	\begin{lemma}
Let $P\in \mathbb{H}[q_1,\ldots,q_n]$ and let $M,L$ be monic polynomials in $\mathbb{H}[q_1,\ldots,q_n]$ of degree $d_m$ in $q_m$ and of degree $d_\ell$ in $q_\ell$ respectively. If we divide $P$ by $M$ and then we divide the remainder of such division by $L$, we get a second remainder of degree less than $d_m$ in $q_m$ and less than $d_\ell$ in $q_\ell$.
	\end{lemma}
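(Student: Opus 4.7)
I would carry out two successive Euclidean divisions. For the first, viewing $P$ as a polynomial in $q_m$ with coefficients in $\mathbb{H}[q_1,\ldots,\widehat{q_m},\ldots,q_n]$, I would exploit the fact that every $q_i$ $*$-commutes with every quaternionic scalar (written on the right) and with every other variable, together with the hypothesis that $M$ is monic in $q_m$. This makes the classical reduction step---cancelling the top $q_m$-coefficient $p_s$ of the current dividend by subtracting $q_m^{s-d_m}*M*p_s$---well-defined and strictly $q_m$-degree-decreasing, so after finitely many iterations one obtains $P = M*S + R_1$ with $\deg_{q_m} R_1 < d_m$. Applying the same argument to $R_1$ and $L$ in the variable $q_\ell$ then yields $R_1 = L*T + R_2$ with $\deg_{q_\ell} R_2 < d_\ell$.

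The substantive point, and the only place the lemma has content beyond a double application of the Euclidean algorithm, is to verify that the second division does not spoil the $q_m$-degree bound just achieved. I would write $R_1 = \sum_{k} q_\ell^{k}\, r_k$ with $r_k \in \mathbb{H}[q_1,\ldots,\widehat{q_\ell},\ldots,q_n]$, observing from the first division that $\deg_{q_m} r_k < d_m$ for every $k$. Since $L$ is a monic polynomial in the variable $q_\ell$ with quaternionic (scalar) coefficients, each reduction step in the second division replaces the current leading $q_\ell$-coefficient $r_s$ by $\mathbb{H}$-linear combinations of $r_s$ obtained from the coefficients of $L$. Multiplication by a quaternionic scalar does not raise the $q_m$-degree, so the bound $\deg_{q_m}(\cdot) < d_m$ propagates through every reduction step and, by induction on the number of steps, is inherited by the final remainder $R_2$.

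The principal obstacle is bookkeeping in the noncommutative $*$-product setting: one must place the quotient on the correct side of the divisor in each reduction step, and use that the leading coefficient $1$ of $M$ (respectively $L$) really acts as a two-sided identity in the cancellation. Both checks reduce to the $*$-commutation of the variables with each other and with quaternionic scalars on the right, after which the classical single-variable Euclidean argument applies twice, and the two parallel degree bounds $\deg_{q_m} R_2 < d_m$ and $\deg_{q_\ell} R_2 < d_\ell$ follow simultaneously.
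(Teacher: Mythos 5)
Your argument is correct and reaches the same conclusion, but the key step is handled by a different mechanism than in the paper. Both proofs agree on the skeleton: perform the two Euclidean divisions and then check that the division by $L$ does not destroy the bound $\deg_{q_m}<d_m$ already achieved. For that check, you expand the first remainder in powers of $q_\ell$ and track the $q_m$-degree through each reduction step of the second division, using that the step only modifies coefficients by right-multiplication by quaternionic scalars. The paper instead expands the first remainder in powers of $q_m$, writes $R=\sum_k q_m^k*R_k$ with $R_k$ free of $q_m$, divides each coefficient $R_k$ by $L$ separately, reassembles, and invokes the \emph{uniqueness} of quotient and remainder (Proposition 3.2 in the cited reference) to identify the reassembled remainder $\sum_k q_m^k*T'_k$ with the actual second remainder $T$; the degree bound in $q_m$ is then manifest. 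Your route requires opening up the division algorithm and arguing by induction on its steps, whereas the paper's treats the division as a black box with existence and uniqueness, which is slightly cleaner but needs the observation that $q_m^k*L=L*q_m^k$ under the slice product. One point in your favour: you make explicit that the coefficients of $L$ are quaternionic scalars (equivalently, that $L$ does not involve $q_m$), a hypothesis that is genuinely needed --- for instance $P=q_2$, $M=q_1$, $L=q_2+q_1$ gives a second remainder $-q_1$ of degree $1$ in $q_1$ --- and which the paper leaves implicit, relying on the intended application where $M$ and $L$ are the single-variable polynomials $S_{a_m}(q_m)$ and $S_{a_\ell}(q_\ell)$.
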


\begin{proof}
Applying the Euclidean division with remainder as in \cite[Proposition 3.2]{Nul1}, we can divide $P$ by the monic polynomial $M$ obtaining
$$P(q_1,\ldots,q_n)=M*P_1(q_1,\ldots,q_n)+R(q_1,\ldots,q_n),$$
with
\begin{equation}\label{resti}
	R(q_1,\ldots,q_n)=\sum_{k=1}^{d_{m}-1}q_m^k*R_k(q_1,\ldots,q_{m-1},q_{m+1},\ldots,q_n),\quad  \text{with}  \quad \deg_{q_m}R<d_m.
\end{equation} 
Similarly, we can write $R=L*Q+T$ with
$$T(q_1,\ldots,q_n)=\sum_{k=1}^{d_{\ell}-1}q_\ell^k*T_k(q_1,\ldots,q_{\ell-1},q_{\ell+1},\ldots,q_n), \quad \text{and} \deg_{q_\ell}T<d_\ell.$$
If divide each $R_k$ in \eqref{resti} by $L$, we can write $R_k=L*Q_k + T'_k$ with $\deg_{q_\ell}T'_k<d_\ell$, obtaining
\[R=\sum_{k=1}^{d_{m}-1}q_m^k*R_k=\sum_{k=1}^{d_{m}-1}q_m^k*(L*Q_k+T_k')=L*\sum_{k=1}^{d_{m}-1}q_m^k*Q_k+\sum_{k=1}^{d_{m}-1}q_m^k*T'_k.\]
Thanks to the uniqueness statement in Proposition 3.2 in \cite{Nul1}, we get that
$$T(q_1,\ldots,q_n)=\sum_{k=1}^{d_{m}-1}q_m^k*T'_k(q_1,\ldots,q_{m-1},q_{m+1},\ldots,q_n),$$
and hence we conclude that
$\deg_{q_m} T < d_m$.
\end{proof}	
	
\noindent Using the previous lemma, it is not difficult to prove the following.
	\begin{lemma}\label{gradouno}
		If $P\in \mathbb{H}[q_1,\ldots,q_n]$ has degree at most one in $q_1,\ldots,q_n$ and vanishes on a set of the type $\mathbb{S}_{a_1}\times \ldots\times \mathbb{S}_{a_n}$ with $a_\ell \notin \mathbb R$ for every $\ell$, then  $P$ is identically zero.
	\end{lemma}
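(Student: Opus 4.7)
The plan is to combine the Splitting Lemma \ref{splitting} with a tensor-product interpolation argument: a multilinear polynomial has at most $2^n$ coefficients, so forcing it to vanish on a carefully chosen product of $n$ two-point sets inside a single slice will suffice to kill every coefficient.

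First I fix an imaginary unit $K \in \mathbb{S}$ together with $L \in \mathbb{S}$ orthogonal to $K$, and apply Lemma \ref{splitting} to write $P|_{\mathbb{C}_K^n} = F + GL$, with $F,G \colon \mathbb{C}_K^n \to \mathbb{C}_K$ complex polynomials. The construction in the proof of Lemma \ref{splitting} simply splits each quaternionic coefficient $a_\ell = \alpha_\ell + \beta_\ell L$ while keeping the same multi-index set, so $F$ and $G$ inherit from $P$ the property of being at most multilinear in each variable.

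Next, for each $j$ the intersection $\mathbb{S}_{a_j} \cap \mathbb{C}_K$ consists of exactly two distinct points $z_j^{\pm} := \mathrm{Re}(a_j) \pm |\mathrm{Im}(a_j)|\, K$, distinctness coming from the hypothesis $a_j \notin \mathbb{R}$. By assumption $P$ vanishes on the $2^n$ points of the grid $\prod_{j=1}^n \{z_j^+, z_j^-\} \subset \mathbb{C}_K^n$, which translates into $F + GL = 0$ at each such point. Since the values of $F$ lie in $\mathbb{C}_K$ and those of $GL$ in the complementary real subspace $\mathbb{C}_K L$ of $\mathbb{H}$, both $F$ and $G$ must vanish separately at every grid point.

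The final step is a routine complex interpolation: a multilinear polynomial in $n$ complex variables that vanishes on a product of $n$ two-point sets with distinct pairs must vanish identically, since the $2^n \times 2^n$ evaluation map factors as the tensor product of $n$ invertible $2 \times 2$ Vandermonde matrices, one per variable, each nondegenerate precisely because $z_j^+ \neq z_j^-$. This forces all $\alpha_\ell$ and all $\beta_\ell$ to vanish, hence every $a_\ell = 0$, and $P \equiv 0$. I do not anticipate a genuine obstacle; the only point requiring a moment of care is the verification that the splitting preserves multilinearity, which is immediate bookkeeping from the explicit form of $F$ and $G$.
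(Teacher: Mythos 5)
Your proof is correct, but it takes a genuinely different route from the paper's. The paper argues by peeling off variables one at a time: for each fixed $(\hat a_2,\ldots,\hat a_n)$ the one-variable polynomial $q_1\mapsto P(q_1,\hat a_2,\ldots,\hat a_n)$ has degree at most one and vanishes on the whole $2$-sphere $\mathbb{S}_{a_1}$, hence is zero; writing $P=q_1P_1+P_2$ one concludes that $P_1,P_2$ vanish on $\mathbb{S}_{a_2}\times\cdots\times\mathbb{S}_{a_n}$, and the process is iterated down to a two-variable case that is finished off with Proposition \ref{mlineare} and the Identity Principle. You instead work entirely inside a single slice: Lemma \ref{splitting} reduces to two multilinear complex polynomials $F,G$ on $\mathbb{C}_K^n$, and the vanishing on the $2^n$ grid points $\prod_j\{z_j^+,z_j^-\}$ (with $z_j^\pm$ the two points of $\mathbb{S}_{a_j}\cap\mathbb{C}_K$, distinct because $a_j\notin\mathbb{R}$) kills all coefficients via the invertibility of a tensor product of $2\times 2$ Vandermonde matrices. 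Your argument is self-contained linear algebra, avoids the induction and the appeal to Proposition \ref{mlineare}, and in fact proves something slightly stronger: vanishing on the finite grid $\prod_j\bigl(\mathbb{S}_{a_j}\cap\mathbb{C}_K\bigr)$ in one slice already forces $P\equiv 0$, whereas the paper uses the full product of spheres. The only blemish is notational: your closing sentence ``every $a_\ell=0$'' should read ``every coefficient $a_{\ell_1,\ldots,\ell_n}=0$'', to avoid a clash with the points $a_1,\ldots,a_n$ defining the spheres.
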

\begin{proof}
For any $(\hat a_2,\ldots,\hat a_n) \in \mathbb{S}_{a_2}\times \ldots\times \mathbb{S}_{a_n}$ The polynomial $\hat P(q_1)=P(q_1,\hat a_2,\ldots,\hat a_n)$ is slice regular and vanishes on the entire sphere $\mathbb{S}_{a_1}$. 
Since $\deg_{q_1}\hat P \le 1$, $\hat P$ is identically zero. Thus, if
$P(q_1,\ldots,q_n)=q_1P_1(q_2,\ldots,q_n)+P_2(q_2,\ldots,q_n)$, then  $P_1$ and $P_2$ vanish on $\mathbb{S}_{a_2}\times \ldots\times \mathbb{S}_{a_n}$.
Since $P_1$ and $P_2$ have degree less or equal than $1$ in each variable, we can repeat the procedure until we are left to study the case of polynomials in two variables $q_{n-1},q_n$, of degree less or equal than one in each variable, vanishing on $\mathbb{S}_{a_{n-1}}\times \mathbb{S}_{a_n}$.
Let $Q(q_{n-1},q_n)$ be such polynomial. Then
$Q(q_{n-1},a_n)$ is a one-variable polynomial vanishing for any $q_{n-1}\in \mathbb S_{a_{n-1}}$. The fact that $\deg_{q_{n-1}}Q\le 1$ implies that $Q(q_{n-1},q_n)$ equals zero on $\mathbb{H}\times \{a_n\}$. 
The same holds if we restrict to $q_n=\bar a_n$. \\
Recalling Proposition \ref{mlineare}, we can write
\[Q(q_{n-1},q_n)=(q_n-a_n)*(q_{n-1}\alpha +\beta)=q_{n-1}q_n\alpha-q_{n-1}a_n\alpha+q_n\beta-a_n\beta \text{ \ for some $\alpha,\beta \in \mathbb H$}.\]
 Hence 
\[0\equiv Q(q_{n-1},\bar a_n)=q_{n-1}\bar a_n\alpha-q_{n-1}a_n\alpha+\bar a_n\beta-a_n\beta=q_{n-1}(\bar a_n-a_n)\alpha+(\bar a_n-a_n)\beta.\]
By the Identity Principle for polynomials, $\alpha$ and $\beta$ must be zero.
Therefore $Q$ is identically zero. Proceeding backwards we deduce that the same holds for $P$. 

\end{proof}

\noindent Let us now characterize polynomials vanishing on subsets of the form 	$\mathbb{S}_{a_1}\times \ldots\times \mathbb{S}_{a_k}\times \mathbb H^{n-k}$.
	
{\begin{proposition}\label{prodottisfere}
		Let $1\le k \le n$. A slice regular polynomial $P\in
		\mathbb{H}[q_1,\ldots,q_n]$ vanishes at		
 $\mathbb{S}_{a_1}\times \ldots\times \mathbb{S}_{a_k}\times \mathbb{H}^{n-k}$, with $a_l \notin \mathbb R$ for every $1\le l \le k$, if and only if $P$ is of the form
	\[			P(q_1,\ldots,q_n)=\sum_{l=1}^k S_{a_l}*P_l(q_1, \ldots,q_n),\]
		where $S_{a_l}(q_l)=q_l^2-2 {\rm Re}(a_l)q_l+|a_l|^2$.


	\end{proposition}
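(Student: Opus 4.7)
The plan is to prove the non-trivial direction by iterated Euclidean division by the symmetrized polynomials $S_{a_1}, \ldots, S_{a_k}$. The easy direction (sufficiency) follows from the fact that each $S_{a_l}$ has real coefficients, so by the observation after the definition of $*$-product, $S_{a_l}*P_l$ evaluates as $S_{a_l}\cdot P_l$ pointwise; since $S_{a_l}(q_l)=0$ whenever $q_l\in\mathbb{S}_{a_l}$, the $l$-th summand vanishes on the whole product set, and hence so does any sum of such summands.

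For the necessity, I would proceed by induction on $l=1,\ldots,k$, showing that after $l$ divisions one can write
\begin{equation*}
P(q_1,\ldots,q_n)=\sum_{j=1}^l S_{a_j}*P_j(q_1,\ldots,q_n)+R_l(q_1,\ldots,q_n),
\end{equation*}
where $R_l$ still vanishes on $\mathbb{S}_{a_1}\times\cdots\times\mathbb{S}_{a_k}\times\mathbb{H}^{n-k}$ and satisfies $\deg_{q_j}R_l<2$ for every $1\le j\le l$. The base step is obtained by applying the Euclidean division with remainder (Proposition 3.2 in \cite{Nul1}, used in the preceding lemma) to $P$ with respect to the monic polynomial $S_{a_1}$ in $q_1$; since $S_{a_1}$ has real coefficients and vanishes whenever $q_1\in\mathbb{S}_{a_1}$, the remainder $R_1$ inherits the vanishing property. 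The inductive step is the non-trivial point, and is precisely what the preceding lemma was designed for: dividing $R_{l-1}$ by $S_{a_l}$ produces a new remainder $R_l$ with $\deg_{q_l}R_l<2$, and by the lemma the degrees $\deg_{q_j}R_l<2$ for $j<l$ are preserved.

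After $k$ steps we obtain a remainder $R_k$ with $\deg_{q_j}R_k\le 1$ for every $1\le j\le k$ which still vanishes on $\mathbb{S}_{a_1}\times\cdots\times\mathbb{S}_{a_k}\times\mathbb{H}^{n-k}$. Writing $R_k$ as a polynomial in $q_1,\ldots,q_k$ with coefficients in $\mathbb{H}[q_{k+1},\ldots,q_n]$, for each fixed choice $(\hat a_{k+1},\ldots,\hat a_n)\in\mathbb{H}^{n-k}$ the specialization $R_k(q_1,\ldots,q_k,\hat a_{k+1},\ldots,\hat a_n)$ is a slice regular polynomial in $k$ variables of degree at most one in each variable, vanishing on $\mathbb{S}_{a_1}\times\cdots\times\mathbb{S}_{a_k}$. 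Lemma \ref{gradouno} (applied in $k$ variables) forces this specialization to be identically zero. Since $(\hat a_{k+1},\ldots,\hat a_n)$ was arbitrary, each coefficient polynomial in $\mathbb{H}[q_{k+1},\ldots,q_n]$ is identically zero, hence $R_k\equiv 0$ and $P=\sum_{l=1}^k S_{a_l}*P_l$ as required.

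The main obstacle is ensuring that the iterative division does not raise the degrees in the previously reduced variables, because the $*$-product is non-commutative and a priori dividing by $S_{a_l}$ in $q_l$ could interact badly with the remainders that were already reduced in $q_1,\ldots,q_{l-1}$. This is exactly the point resolved by the preceding lemma on successive Euclidean divisions, so once that is in hand the induction runs smoothly, and the final vanishing of $R_k$ is an immediate consequence of Lemma \ref{gradouno}.
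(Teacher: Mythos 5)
Your proposal is correct and follows essentially the same route as the paper: iterated Euclidean division by the monic real-coefficient polynomials $S_{a_1},\ldots,S_{a_k}$, with the preceding lemma guaranteeing that successive divisions do not raise the degree in the already-reduced variables, and then Lemma \ref{gradouno} applied to the specialized remainder to force it to vanish. The only cosmetic difference is that you track explicitly that each remainder inherits the vanishing property (via the real coefficients of the $S_{a_l}$), a point the paper leaves implicit.
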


{	\begin{proof}
		Let us divide $P(q_1,\ldots, q_n)$ by $S_{a_1}(q_1):=(q_1^2-2{\rm Re} (a_1)q_1+|a_1|^2)$,
		the reminder $R_1(q_1,\ldots, q_n)$
		is of degree at most 1 in $q_1$; furthermore, if one divides
		$R_1(q_1,\ldots, q_n)$ by $S_{a_2}(q_2):=(q_2^2-2{\rm Re} (a_2)q_2+|a_2|^2)$, one then writes
		\[ P(q_1,\ldots, q_n)=S_{a_1}*P_1(q_1,\ldots, q_n)+S_{a_2} *P_2(q_1,\ldots, q_n)+R_2 (q_1,\ldots, q_n)\]
		with $R_2$ a slice regular polynomial of degree at most 1 in $q_1$ and in $q_2$.
		Iterating this procedure $k$ times, one obtains
		\begin{eqnarray*}  P(q_1,\ldots, q_n)&=&S_{a_1}* P_1(q_1,\ldots, q_n)+S_{a_2} *P_2(q_1,\ldots, q_n)+
			\ldots 
			\\  &\ldots& +S_{a_k}* P_k(q_1,\ldots, q_n)+R (q_1,\ldots,
			q_n)\end{eqnarray*} with $R$ a polynomial of degree at most 1 in
		$q_1,\ q_2,\ldots,q_k$.  Suppose that $P$ vanishes at
		$\mathbb{S}_{a_1}\times\cdots\times \mathbb{S}_{a_k}\times \mathbb H^{n-k}$. Then, for any fixed choice of the last $n-k$ entries $(a_{k+1},\ldots,a_n) \in \mathbb H^{n-k}$, applying Lemma \ref{gradouno}, 
		\[(q_1,\ldots,q_k) \mapsto R(q_1,\ldots,q_k,a_{k+1},\ldots,a_n) \] vanishes identically.
		Since $(a_{k+1},\ldots,a_n)$ are chosen arbitrarily, we necessarily have $R\equiv 0$. \\ Conversely if $P$ is of
		the form $P(q_1,\ldots,q_n)=\sum\limits_{l=1}^k S_{a_l}*P_l(q_1,
		\ldots,q_n)$, recalling that $S_{a_l}(q_l)$ has real coefficients for any $l$,
                we have that
                 \begin{equation}\label{polyreali}
                 	S_{a_l}*\sum_{(k_1,\ldots,
			k_n)\in K} q_1^{k_1}q_2^{k_2}\cdots q_n^{k_n} a_{k_1,\ldots, k_n}
		=\sum_{(k_1,\ldots, k_n)\in K} q_1^{k_1}q_2^{k_2}\cdots
		S_{a_{k_l}}(q_l)q_l^{k_l} \ldots q_n^{k_n} a_{k_1,\ldots, k_n}
		\end{equation}
		 and hence $P$ vanishes on  $\mathbb{S}_{a_1}\times \ldots\times \mathbb{S}_{a_k} \times \mathbb H^{n-k}$.
	\end{proof}
}

}

\noindent	Let us now recall {Proposition 3.10 in \cite{Nul1}}.

	\begin{proposition}\label{zeri}
		A slice regular polynomial  $P\in
		\mathbb{H}[q_1,\ldots,q_n]$ vanishes at $(a_1,\ldots,a_n)\in \mathbb{H}^n$ if and only if there exist
		$P_k\in \mathbb H[q_1,\ldots,q_k]$ for any $k=1,\ldots,n$ such that
		\begin{equation}\label{zeroab}
			P(q_1,\ldots,q_n)=\sum_{k=1}^n(q_k-a_k)*P_k(q_1, \ldots,q_k).
		\end{equation}	
	\end{proposition}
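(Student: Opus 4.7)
The plan is to prove both implications, with the ``if'' direction reduced to a direct $*$-product computation and the ``only if'' direction settled by induction on the number of variables $n$.

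For the easy direction, I would first show that, for any $P_k\in\mathbb H[q_1,\ldots,q_k]$, the polynomial $(q_k-a_k)*P_k(q_1,\ldots,q_k)$ vanishes at $(a_1,\ldots,a_n)$. Writing $P_k(q_1,\ldots,q_k)=\sum q_1^{\ell_1}\cdots q_k^{\ell_k}c_{\ell_1,\ldots,\ell_k}$ and applying the definition of the $*$-product, one gets
\[(q_k-a_k)*P_k=\sum\left(q_1^{\ell_1}\cdots q_k^{\ell_k+1}c_\ell - q_1^{\ell_1}\cdots q_k^{\ell_k}a_k c_\ell\right).\]
Evaluating at $(a_1,\ldots,a_n)$ produces a telescoping cancellation in each summand, regardless of whether the components $a_j$ commute; summing over $k$ then yields the ``if'' direction.

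For the converse implication, I would induct on $n$. The base case $n=1$ is classical: Euclidean division of $P(q)$ by $(q-a)$ gives $P(q)=(q-a)*P_1(q)+P(a)$, so $P(a)=0$ delivers the decomposition. For the inductive step with $n\ge 2$, I would apply the left Euclidean division of Proposition~3.2 of \cite{Nul1} to the monic polynomial $(q_n-a_n)$, obtaining
\[P(q_1,\ldots,q_n)=(q_n-a_n)*P_n(q_1,\ldots,q_n)+R(q_1,\ldots,q_{n-1}),\]
where the remainder $R$ has degree strictly less than $1$ in $q_n$, and hence depends only on $q_1,\ldots,q_{n-1}$. By the computation in the easy direction, $(q_n-a_n)*P_n$ vanishes at $(a_1,\ldots,a_n)$, so the hypothesis $P(a_1,\ldots,a_n)=0$ forces $R(a_1,\ldots,a_{n-1})=0$. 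Applying the inductive hypothesis to $R$ at $(a_1,\ldots,a_{n-1})$ produces $P_1,\ldots,P_{n-1}$ with $P_k\in\mathbb H[q_1,\ldots,q_k]$ such that $R=\sum_{k=1}^{n-1}(q_k-a_k)*P_k$, and concatenation with the $(q_n-a_n)*P_n$ term completes the decomposition for $P$.

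The only structural ingredient used beyond direct $*$-product manipulation is the fact that left division by $(q_n-a_n)$ genuinely returns a $q_n$-independent remainder; this is the content of Proposition~3.2 of \cite{Nul1}, already invoked by the lemma preceding Lemma~\ref{gradouno}. I do not foresee any additional obstacles: the main subtlety is to handle the possibly non-commuting $a_j$ correctly, but the telescoping identity above is insensitive to the order of the $a_j$, so no extra hypothesis on the point $(a_1,\ldots,a_n)$ is required.
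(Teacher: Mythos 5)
Your proof is correct. Note that the paper itself does not prove this statement --- it is recalled verbatim as Proposition 3.10 of \cite{Nul1} --- so there is no in-paper argument to compare against; your route (left Euclidean division by the monic $(q_n-a_n)$, yielding a remainder free of $q_n$, followed by induction on $n$, with the converse handled by the telescoping cancellation $a_k^{\ell_k+1}c_\ell-a_k^{\ell_k}a_kc_\ell=0$) is exactly the one suggested by the division lemmas the paper does prove, and the one delicate point --- that the cancellation needs $q_k$ to be the \emph{rightmost} variable occurring in $P_k$, which is guaranteed by $P_k\in\mathbb H[q_1,\ldots,q_k]$ --- is handled correctly.
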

	
\noindent Combining the two previous propositions, we obtain the following characterization of slice regular polynomials vanishing on a subset of the form $\mathbb{S}_{a_1}\times \ldots\times \mathbb{S}_{a_k}\times\{a_{k+1}\}\times \ldots\times \{a_{n}\}$.
{\begin{proposition}\label{Scom}
		A slice regular polynomial $P\in \mathbb{H}[q_1,\ldots,q_n]$ vanishes on
		$\mathbb{S}_{a_1}\times \ldots\times \mathbb{S}_{a_k}\times\{a_{k+1}\}\times \ldots\times \{a_{n}\}\subset
		\mathbb{H}^n$,  with $a_l \notin \mathbb R$ for every $1\le l\le k$, if and only if there
		exist $P_l\in \mathbb H[q_1,\ldots,q_n]$ for any $l=1,\ldots, k$
		and $P_l\in \mathbb H[q_1,\ldots,q_l]$ for any $l=k+1,\ldots, n$ such that
		\begin{equation}\label{spezzo1}
			P(q_1,\ldots,q_n)=\sum_{l=1}^k S_{a_l}*P_l(q_1, \ldots,{q_n})+\sum_{l=k+1}^n (q_l-a_l)*P_l(q_1, \ldots,q_l).
		\end{equation}	
		If moreover $a_la_m=a_ma_l$ for any $l,m> k$, then $P$ vanishes on $\mathbb{S}_{a_1}\times \ldots\times \mathbb{S}_{a_k}\times\{a_{k+1}\}\times \ldots\times \{a_{n}\}\subset
		\mathbb{H}^n$ if and only if $P$
		is of the form
		\begin{equation}\label{spezzo}
			P(q_1,\ldots,q_n)=\sum_{l=1}^k S_{a_l}*P_l(q_1, \ldots,q_n)+\sum_{l=k+1}^n (q_l-a_l)*P_l(q_1, \ldots,q_n),
		\end{equation}	
		with $P_l\in \mathbb H[q_1,\ldots,q_n]$ for any $l=1,\ldots,n$. 
	\end{proposition}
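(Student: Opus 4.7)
The plan is to establish the equivalence by combining iterated Euclidean division with Propositions \ref{prodottisfere} and \ref{mlineare}, together with Lemma \ref{gradouno}.

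For the sufficiency direction of (\ref{spezzo1}), I would verify term by term that each summand vanishes on the target set. Each $S_{a_l}*P_l$ vanishes on the $l$-th spherical factor by exactly the computation (\ref{polyreali}) used in the proof of Proposition \ref{prodottisfere}: since $S_{a_l}$ has real coefficients and involves only $q_l$, one may slide $S_{a_l}(q_l)$ into the $l$-th slot inside every monomial, and evaluation at $q_l\in\mathbb{S}_{a_l}$ then kills the term. For each summand $(q_l-a_l)*P_l(q_1,\ldots,q_l)$ with $l>k$, Proposition \ref{mlineare} gives vanishing on $\mathbb{H}^{l-1}\times\{a_l\}\times(C_{a_l})^{n-l}$; because $P_l$ does not involve $q_{l+1},\ldots,q_n$, the product is independent of those variables and therefore vanishes on the strictly larger set $\mathbb{H}^{l-1}\times\{a_l\}\times\mathbb{H}^{n-l}$, which contains the target.

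For the necessity direction of (\ref{spezzo1}), I would proceed by iterated Euclidean division. Starting from $P$, successively divide by the monic one-variable polynomials $S_{a_1},\ldots,S_{a_k},(q_{k+1}-a_{k+1}),\ldots,(q_n-a_n)$. An $n$-fold iteration of the lemma preceding Lemma \ref{gradouno} guarantees that each subsequent division preserves the degree reductions achieved at earlier steps, so the final remainder $R$ has degree at most one in each of $q_1,\ldots,q_k$ and degree zero in each of $q_{k+1},\ldots,q_n$. In other words $R\in\mathbb{H}[q_1,\ldots,q_k]$ with degree at most one in each variable. Since the sufficiency direction already shows that the explicit summands vanish on the target set, so does $R$; being independent of $q_{k+1},\ldots,q_n$, the polynomial $R$ must then vanish on the full product of spheres $\mathbb{S}_{a_1}\times\cdots\times\mathbb{S}_{a_k}$, and Lemma \ref{gradouno} forces $R\equiv 0$.

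Under the additional commutativity hypothesis on $a_{k+1},\ldots,a_n$, necessity of (\ref{spezzo}) follows immediately from (\ref{spezzo1}) via the inclusion $\mathbb{H}[q_1,\ldots,q_l]\subseteq\mathbb{H}[q_1,\ldots,q_n]$. For sufficiency, the $S_{a_l}$ summands are handled exactly as above, while for $l>k$ Proposition \ref{mlineare} gives $(q_l-a_l)*P_l(q_1,\ldots,q_n)$ vanishing on $\mathbb{H}^{l-1}\times\{a_l\}\times(C_{a_l})^{n-l}$; the assumption $a_ja_l=a_la_j$ for $j>l\geq k+1$ places every such $a_j$ inside $C_{a_l}$, so the target set is contained in this vanishing set. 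I expect the main technical point to be the iterated division argument: one must carefully check that the degree bounds obtained at each stage survive the later divisions, which amounts to an inductive extension of the lemma at the start of the section.
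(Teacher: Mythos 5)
Your sufficiency arguments are correct, but the necessity of \eqref{spezzo1} has a genuine gap. The iterated Euclidean division of \cite[Proposition 3.2]{Nul1} by the monic polynomial $(q_l-a_l)$ produces a quotient $P_l$ that in general depends on \emph{all} of $q_1,\ldots,q_n$, not only on $q_1,\ldots,q_l$. This matters twice. First, \eqref{spezzo1} explicitly requires $P_l\in\mathbb H[q_1,\ldots,q_l]$ for $l\ge k+1$, so the decomposition your division yields is only of the weaker shape \eqref{spezzo}, which without the commutativity hypothesis is \emph{not} equivalent to vanishing on the target set. Second, and more seriously, your deduction that the final remainder $R$ vanishes on $\mathbb S_{a_1}\times\cdots\times\mathbb S_{a_k}$ rests on the claim that each summand $(q_l-a_l)*P_l$ vanishes on the target set; by Proposition \ref{mlineare} such a summand is only guaranteed to vanish on $\mathbb H^{l-1}\times\{a_l\}\times(C_{a_l})^{n-l}$, and the target set lies inside this only when $a_{l+1},\ldots,a_n$ commute with $a_l$ --- exactly the hypothesis absent from the first part of the statement. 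A concrete failure: dividing $q_lq_{l+1}$ by $(q_l-a_l)$ gives $q_lq_{l+1}=(q_l-a_l)*q_{l+1}+q_{l+1}a_l$, and $(q_l-a_l)*q_{l+1}$ evaluated at $q_l=a_l$ equals $a_lq_{l+1}-q_{l+1}a_l\neq 0$ whenever $q_{l+1}\notin C_{a_l}$.

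The paper avoids this by first invoking Proposition \ref{zeri} at the single point $(a_1,\ldots,a_n)$, which already delivers quotients $P_l\in\mathbb H[q_1,\ldots,q_l]$ for every $l$; the substitution $q_{k+1}=a_{k+1},\ldots,q_n=a_n$ then annihilates the tail $\sum_{l>k}(q_l-a_l)*P_l$ precisely because each such $P_l$ involves no later variables, isolating $\hat P=\sum_{l\le k}(q_l-a_l)*P_l$, which vanishes on $\mathbb S_{a_1}\times\cdots\times\mathbb S_{a_k}$ and is rewritten via Proposition \ref{prodottisfere}. If you want to keep a division-flavoured argument, you must divide in a manner that keeps the quotients in the restricted variable sets (e.g.\ by successive substitution of the last coordinates, as in the proof of Proposition \ref{zeri}), not by the generic Euclidean division.
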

	\begin{proof}
	Let us prove the first part of the statement. If $P$ is as in
		\eqref{spezzo1}, then, thanks to Proposition \ref{prodottisfere} and Proposition \ref{zeri}, $P$ vanishes on $\mathbb{S}_{a_1}\times
		\ldots\times \mathbb{S}_{a_k}\times\{a_{k+1}\}\times \ldots\times
		\{a_{n}\}$. 
				
		Conversely, assume $P$ vanishes on $\mathbb{S}_{a_1}\times \ldots\times \mathbb{S}_{a_k}\times\{a_{k+1}\}\times \ldots\times \{a_{n}\}$; in particular
		$P$ vanishes at $(a_1,\ldots, a_n)$, therefore, from   Proposition \ref{zeri}, we have
		\begin{equation*}\label{spezzo2}
			P(q_1,\ldots,q_n)=\sum_{l=1}^k(q_l-a_l)*P_l(q_1, \ldots,q_l)+\sum_{l=k+1}^n(q_l-a_l)*P_l(q_1, \ldots,q_l);
		\end{equation*}	
Consider 
\[\hat P(q_1,\ldots,q_k):= P(q_1,\ldots,q_k,a_{k+1},\ldots,a_n)=\sum_{l=1}^k(q_l-a_l)*P_l(q_1, \ldots,q_l)\] 
which vanishes on 
$\mathbb{S}_{a_1}\times
\ldots\times \mathbb{S}_{a_k}$ and thus, applying Proposition \ref{prodottisfere}, 
\[\hat P(q_1,\ldots,q_l)=\sum_{l=1}^k S_{a_l}*\hat P_l(q_1, \ldots,q_l),\]
which leads to the expression \eqref{spezzo1}.
Since \eqref{spezzo1} is a particular instance of \eqref{spezzo}, to conclude the proof of the second part of the statement, it suffices to show that if $P$ is as in \eqref{spezzo}, with $a_{k+1},\ldots,a_n$ on a the same slice, then $P$ vanishes on $\mathbb{S}_{a_1}\times
\ldots\times \mathbb{S}_{a_k}\times\{a_{k+1}\}\times \ldots\times
\{a_{n}\}$.
But this is guaranteed by 
Proposition \ref{mlineare} and Proposition \ref{prodottisfere}.

	\end{proof}}

	\noindent In accordance with the notations introduced in \cite{Nul1}, for any subset $U$ in $\mathbb{H}^n$, we denote by $E_U$ the set of polynomials vanishing on $U$. 

\noindent In particular,  
$E_{\mathbb{S}_{a_1}\times \ldots\times  \mathbb{S}_{a_k}\times \{a_{k+1}\} \times \ldots \times \{a_{n}\}}$
denotes the set of polynomials vanishing on $\mathbb{S}_{a_1}\times
\ldots\times \mathbb{S}_{a_k}\times\{a_{k+1}\}\times \ldots\times
\{a_{n}\}$ (with $1\leq k\leq n$).
	
\noindent A direct consequence of Formula \eqref{spezzo} in Proposition \ref{Scom} is the following.	
	
	\begin{proposition}\label{idealprod}
Let $(a_1,\ldots,a_n)\in \mathbb H^n$ and let $k$, with $1\leq k\leq n$, be such that {$a_l \notin \mathbb R$ for every $1\le l\le k$,} and
$a_la_m=a_ma_l$ for any $l,m>k$.
Then, $E_{\mathbb{S}_{a_1}\times \ldots\times \mathbb{S}_{a_k}\times\{a_{k+1}\}\times \ldots\times \{a_{n}\}}$
is a right ideal contained in $E_{(a_1,\ldots, a_n)}$ .
	\end{proposition}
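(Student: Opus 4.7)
The plan is to deduce both assertions directly from the representation formula (\ref{spezzo}) of Proposition \ref{Scom}, exploiting associativity of the $*$-product and the fact that the ``left factors'' appearing in (\ref{spezzo}) either have real coefficients or belong to a single variable.

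First, the inclusion $E_{\mathbb{S}_{a_1}\times \ldots\times \mathbb{S}_{a_k}\times\{a_{k+1}\}\times \ldots\times \{a_{n}\}}\subseteq E_{(a_1,\ldots,a_n)}$ is immediate: since $a_l\in\mathbb{S}_{a_l}$ for every $l\le k$, the point $(a_1,\ldots,a_n)$ lies in $\mathbb{S}_{a_1}\times \ldots\times \mathbb{S}_{a_k}\times\{a_{k+1}\}\times \ldots\times \{a_{n}\}$, hence any polynomial vanishing on this set vanishes at $(a_1,\ldots,a_n)$.

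Closure under addition is clear. The substantive point is closure under right $*$-multiplication. Given $P\in E_{\mathbb{S}_{a_1}\times \ldots\times \mathbb{S}_{a_k}\times\{a_{k+1}\}\times \ldots\times \{a_{n}\}}$, the assumption $a_la_m=a_ma_l$ for all $l,m>k$ allows us to invoke the second part of Proposition \ref{Scom} and write
\[
P(q_1,\ldots,q_n)=\sum_{l=1}^k S_{a_l}*P_l(q_1,\ldots,q_n)+\sum_{l=k+1}^n (q_l-a_l)*P_l(q_1,\ldots,q_n),
\]
with $P_l\in\mathbb H[q_1,\ldots,q_n]$. For an arbitrary $Q\in\mathbb H[q_1,\ldots,q_n]$, associativity of the $*$-product yields
\[
P*Q=\sum_{l=1}^k S_{a_l}*(P_l*Q)+\sum_{l=k+1}^n (q_l-a_l)*(P_l*Q).
\]
This is again a representation of the shape (\ref{spezzo}), so the converse implication of Proposition \ref{Scom} (which is precisely where the commutativity hypothesis on $a_{k+1},\ldots,a_n$ is used) ensures that $P*Q$ vanishes on $\mathbb{S}_{a_1}\times \ldots\times \mathbb{S}_{a_k}\times\{a_{k+1}\}\times \ldots\times \{a_{n}\}$, i.e.\ $P*Q\in E_{\mathbb{S}_{a_1}\times \ldots\times \mathbb{S}_{a_k}\times\{a_{k+1}\}\times \ldots\times \{a_{n}\}}$.

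The only delicate point is ensuring that the algebraic rearrangement $(S_{a_l}*P_l)*Q=S_{a_l}*(P_l*Q)$ and $((q_l-a_l)*P_l)*Q=(q_l-a_l)*(P_l*Q)$ are legitimate; this is just associativity of $*$ on $\mathbb{H}[q_1,\ldots,q_n]$, already observed in Section 2. No new computation with coefficients is needed: the whole argument is the statement that the ``shape'' (\ref{spezzo}) is stable under right $*$-multiplication, which is what makes the set on the left side a right ideal rather than merely an additive subgroup.
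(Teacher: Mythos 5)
Your argument is correct and is precisely what the paper intends: it states Proposition \ref{idealprod} as a ``direct consequence of Formula \eqref{spezzo} in Proposition \ref{Scom}'', i.e.\ the characterization \eqref{spezzo} together with associativity of the $*$-product shows the shape is stable under right $*$-multiplication, exactly as you spell out. Your treatment of the inclusion in $E_{(a_1,\ldots,a_n)}$ is also the obvious one, so there is nothing to add.
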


\noindent{Given a right ideal $I\subset E_{(a_1, a_2,\ldots, a_n)}$, we want to explore to what extent it is possible to enlarge the singleton  
$\{(a_1, a_2,\ldots,a_n)\}$ to a set $U$, with some spherical symmetry,  in  such a way  that $I\subset E_U$.
The idea is to proceed by subsequent steps, starting with the following lemma. }
{\begin{lemma}\label{sfera1}
	If $I$ is a right ideal contained in $E_{(a_1,\ldots,a_n)}$ with $a_1a_2\neq a_2a_1$, then $I\subset E_{\mathbb S_{a_1}\times \{a_2\}\times\cdots\times \{a_n\}}$.
\end{lemma}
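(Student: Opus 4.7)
The plan is to reduce the claim to a one-variable statement and then exploit the right-ideal structure to produce a second zero on the sphere $\mathbb S_{a_1}$.

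Fix $P\in I$. It suffices to show that the one-variable slice regular polynomial $\hat P(q_1):=P(q_1, a_2,\ldots, a_n)$ vanishes identically on $\mathbb S_{a_1}$. We already know $\hat P(a_1)=P(a_1,\ldots,a_n)=0$ because $I\subseteq E_{(a_1,\ldots,a_n)}$. By the structure theorem for zero sets of slice regular polynomials in one variable recalled in the excerpt, $\hat P$ either vanishes identically on $\mathbb S_{a_1}$ or intersects it in at most one point. So the task reduces to exhibiting a second zero of $\hat P$ on $\mathbb S_{a_1}$, distinct from $a_1$.

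The key move is to consider $P*q_2$. This polynomial still belongs to $I$ because $I$ is a \emph{right} ideal, and therefore it vanishes at $(a_1,\ldots, a_n)$. Writing $P=\sum_\ell q_1^{\ell_1}\cdots q_n^{\ell_n}\,a_\ell$, one checks that $(P*q_2)(a_1,\ldots,a_n)=\sum_\ell a_1^{\ell_1} a_2^{\ell_2+1}\cdots a_n^{\ell_n}\,a_\ell$. Using the identity $(a_2^{-1}a_1 a_2)^{\ell_1}=a_2^{-1}a_1^{\ell_1}a_2$, a short calculation then gives
\[
\hat P(a_2^{-1}a_1 a_2) \;=\; P(a_2^{-1}a_1 a_2,\, a_2,\ldots, a_n) \;=\; a_2^{-1}\cdot (P*q_2)(a_1,\ldots,a_n) \;=\; 0.
\]
The hypothesis $a_1 a_2 \neq a_2 a_1$ forces $a_1 \notin \mathbb R$ (otherwise $a_1$ would be central in $\mathbb H$), so $\mathbb S_{a_1}$ is a genuine $2$-sphere; the same hypothesis ensures $a_2^{-1}a_1 a_2 \neq a_1$, and since conjugation preserves the sphere we have $a_2^{-1}a_1 a_2 \in \mathbb S_{a_1}$. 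This provides the required second zero.

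Combining the two zeros with the one-variable structure theorem yields $\hat P \equiv 0$ on $\mathbb S_{a_1}$, which is precisely the statement that $P$ vanishes on $\mathbb S_{a_1}\times \{a_2\}\times\cdots\times \{a_n\}$. I do not foresee any real obstacle: the entire argument hinges on the insight that right-multiplication by $q_2$ converts evaluation at $(a_1,\ldots,a_n)$ into evaluation at $(a_2^{-1}a_1 a_2, a_2,\ldots,a_n)$, turning the non-commutativity between $a_1$ and $a_2$ into exactly the extra spherical root that the one-variable theory needs.
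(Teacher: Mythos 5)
Your proof is correct and takes essentially the same route as the paper's: both exploit $P*q_2=q_2*P\in I$ to show that $\hat P(q_1)=P(q_1,a_2,\ldots,a_n)$ vanishes at the second point $a_2^{-1}a_1a_2\in\mathbb S_{a_1}$, and then invoke the one-variable fact that two distinct zeros on the same sphere force vanishing on the whole sphere. Your explicit remarks that $a_1\notin\mathbb R$ and $a_2^{-1}a_1a_2\neq a_1$ follow from $a_1a_2\neq a_2a_1$ are a welcome (if minor) addition to what the paper leaves implicit.
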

\begin{proof}
	Consider $P\in I$. 
By direct computation, if 
\[P(q_1,\ldots,q_n)=\sum_{(k_1,\ldots,
	k_n)\in K} q_1^{k_1}q_2^{k_2}\cdots q_n^{k_n} a_{k_1,\ldots, k_n},\]
then	
\begin{equation*}
	\begin{aligned}
		q_{2}*P(q_1,\ldots,q_n)&=\sum_{(k_1,\ldots,
			k_n)\in K} q_1^{k_1}q_2^{k_2+1}q_3^{k_3}\cdots q_n^{k_n} a_{k_1,\ldots, k_n}=q_{2}\cdot
		P(q_{2}^{-1}q_1q_{2},q_2,\ldots q_n)
	\end{aligned}
\end{equation*}
when $q_{2}\neq 0$. 	
Since $P*q_2=q_2*P \in I$,		
		it vanishes at $(a_1,\ldots,a_n)$. Hence, if we consider $\hat P(q_1)=P(q_1,a_2,\ldots,a_n)$ we have that $\hat P(a_1)=0$ and $\hat P(a_2^{-1}a_1a_2)=0$. Since $\hat P$ is a slice regular polynomial in one quaternionic variable, thanks to Theorem 3.1 in \cite{libroGSS},  $\hat P$ vanishes on $\mathbb S_{a_1}$. Therefore $P \in E_{\mathbb S_{a_1}\times \{a_2\}\times\cdots\times \{a_n\}}$. 
\end{proof}
}\noindent To iterate this procedure, we need the following result.
\begin{lemma}\label{intsfere}
{Let $a_l \notin \mathbb R$ for every $1\le l\le n$.}	If $P\in E_{\mathbb{S}_{a_1}\times\cdots \times\mathbb S_{a_{n-1}}\times\{a_n\}}\cap E_{\mathbb{S}_{a_1}\times\cdots \times\mathbb S_{a_{n-1}}\times\{\widetilde a_n\}}$ with $\widetilde a_n \in \mathbb S_{a_n}$, then 
	$P\in E_{\mathbb{S}_{a_1}\times\cdots \times\mathbb S_{a_n}}$.
\end{lemma}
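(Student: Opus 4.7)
The plan is to reduce the problem to showing that a certain polynomial in $n-1$ variables vanishes on $L:=\mathbb{S}_{a_1}\times\cdots\times\mathbb{S}_{a_{n-1}}$, to which Proposition~\ref{prodottisfere} can be applied.

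I would first apply Proposition~\ref{Scom} with $k=n-1$ to the vanishing of $P$ on $L\times\{a_n\}$, obtaining a decomposition $P=\sum_{l=1}^{n-1}S_{a_l}*P_l+(q_n-a_n)*P_n$. Since every $S_{a_l}*P_l$ vanishes on $L\times\mathbb{H}$, it is enough to show that $(q_n-a_n)*P_n$ vanishes on $L\times\mathbb{S}_{a_n}$. To exploit the presence of the full sphere $\mathbb{S}_{a_n}$ in the last coordinate, I would perform the Euclidean division of $(q_n-a_n)*P_n$ by the monic (in $q_n$) real-coefficient polynomial $S_{a_n}(q_n)$, writing $(q_n-a_n)*P_n=S_{a_n}*Q+R$ with $\deg_{q_n}R\le 1$, so $R=q_n*R_1+R_0$ for some $R_0,R_1\in\mathbb{H}[q_1,\ldots,q_{n-1}]$. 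Since $S_{a_n}*Q$ already vanishes on $\mathbb{H}^{n-1}\times\mathbb{S}_{a_n}$, the remaining task is to control $R$.

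The critical structural step is the observation that $R$ is in fact of the form $(q_n-a_n)*R_1$. Indeed, both $(q_n-a_n)*P_n$ and $S_{a_n}*Q$ vanish identically on $\mathbb{H}^{n-1}\times\{a_n\}$, so the same holds for $R$; imposing $R(q_1,\ldots,q_{n-1},a_n)\equiv 0$ and using the identity principle for polynomials in $n-1$ variables forces the coefficientwise relation $R_0=-a_n*R_1$. The second hypothesis then yields $R$ vanishing on $L\times\{\widetilde a_n\}$; computing the corresponding evaluation gives the identity $(b*R_1)(\hat q)=0$ for every $\hat q\in L$, where $b:=\widetilde a_n-a_n\neq 0$.

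Proposition~\ref{prodottisfere} applied to $b*R_1\in\mathbb{H}[q_1,\ldots,q_{n-1}]$ yields $b*R_1=\sum_{l=1}^{n-1}S_{a_l}*V_l$; left-multiplying by the constant $b^{-1}$ and using that each $S_{a_l}$ has real coefficients (hence commutes with quaternion constants under $*$) gives $R_1=\sum_{l=1}^{n-1}S_{a_l}*(b^{-1}*V_l)$. Thus $R_1$ vanishes on $L$ and consequently $R=(q_n-a_n)*R_1$ lies in the right ideal generated by $S_{a_1},\ldots,S_{a_{n-1}}$, so $R$ vanishes on $L\times\mathbb{H}$. Combined with the vanishing of $S_{a_n}*Q$ on $\mathbb{H}^{n-1}\times\mathbb{S}_{a_n}$, this closes the argument. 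The step I expect to be the main obstacle is the factorization $R=(q_n-a_n)*R_1$: the Euclidean division by itself only guarantees $\deg_{q_n}R<2$, and the further refinement hinges on the fact that both pieces of the division vanish on the whole slab $\mathbb{H}^{n-1}\times\{a_n\}$. Once this factorization is in hand, the ``scalar cancellation'' by $b^{-1}$ is clean precisely because each $S_{a_l}$ is central in $(\mathbb{H}[q_1,\ldots,q_n],*)$.
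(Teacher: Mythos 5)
Your argument is correct, and while it opens exactly as the paper's proof does (the decomposition of Proposition \ref{Scom} with $k=n-1$, reducing everything to the single term $(q_n-a_n)*P_n$), the engine you use afterwards is genuinely different. The paper writes that term as $\sum_{\ell}q_1^{\ell_1}\cdots q_{n-1}^{\ell_{n-1}}C_{\ell}(q_n)$, notes that each one-variable coefficient polynomial $C_{\ell}$ vanishes at the two distinct points $a_n,\widetilde a_n$ of $\mathbb S_{a_n}$ (using the second factorization $(q_n-\widetilde a_n)*\widetilde P_n$ together with Proposition \ref{mlineare}), and invokes the one-variable spherical-zero theorem to get $C_{\ell}=S_{a_n}*\widetilde C_{\ell}$, hence left-divisibility by $S_{a_n}$ and vanishing on all of $\mathbb H^{n-1}\times\mathbb S_{a_n}$. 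You instead divide by $S_{a_n}(q_n)$, upgrade the remainder to the factorization $(q_n-a_n)*R_1$ with $R_1\in\mathbb H[q_1,\ldots,q_{n-1}]$ (your coefficientwise derivation of $R_0=-a_n*R_1$ via the identity principle is sound, since $q_n$ sits adjacent to the right-hand coefficients), and convert the second hypothesis into the vanishing of $(\widetilde a_n-a_n)*R_1$ on $\mathbb S_{a_1}\times\cdots\times\mathbb S_{a_{n-1}}$, so that Proposition \ref{prodottisfere} in $n-1$ variables closes the argument after cancelling the nonzero constant. In short: the paper reduces to one-variable theory in the last variable, you reduce to the $(n-1)$-variable product-of-spheres characterization in the first $n-1$ variables; your route avoids the second factorization of the remainder term (you only need $P$ itself to vanish on $\mathbb S_{a_1}\times\cdots\times\mathbb S_{a_{n-1}}\times\{\widetilde a_n\}$) at the price of the extra step $R=(q_n-a_n)*R_1$. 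One shared tacit assumption: both proofs need $\widetilde a_n\neq a_n$ (otherwise your $b$ vanishes, and the paper's two-point argument on $\mathbb S_{a_n}$ collapses); this is clearly the intended reading of the hypothesis, as the lemma is false without it.
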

\begin{proof}
From Proposition \ref{Scom}, $P$ has the form
	\[P(q_1,\ldots,q_n)=\sum_{k=1}^{n-1}S_{a_k}*P_k(q_1,\ldots,q_k)+R_n(q_1,\ldots,q_n),\] 
	where $R_n$ can be written in two different ways:
	\[R_n(q_1,\ldots q_n)=(q_n-a_n)*P_n(q_1,\ldots,q_n)=(q_n-\tilde a_n)*\tilde P_n(q_1,\ldots,q_n).\] 
	Thanks to Proposition \ref{mlineare}, $R_n$ vanishes on $\mathbb H^{n-1} \times \{a_n\}$ and on $\mathbb H^{n-1} \times \{\tilde a_n\}.$
	If we write
\[R_n(q_1,\ldots,q_n)=\sum_{\ell_k \in \{0,1\}}q_1^{\ell_1}\cdots q_{n-1}^{\ell_{n-1}}C_{\ell_1,\ldots,\ell_{n-1}}(q_n),\] we get then that 
$C_{\ell_1,\ldots,\ell_{n-1}}(a_n)= C_{\ell_1,\ldots,\ell_{n-1}}(\tilde a_n)=0$ for any choice of $\ell_1,\ldots,\ell_{n-1}$. 
Since each $C_{\ell_1,\ldots,\ell_{n-1}}$ is a slice regular polynomial in one quaternionic variable, thanks to Theorem 3.1 in \cite{libroGSS} we get that any $C_{\ell_1,\ldots,\ell_{n-1}}(q_n)\equiv 0$ on $\mathbb S_{a_n}$. That is, for any choice of $\ell_1,\ldots,\ell_{n-1}$, 
\[C_{\ell_1,\ldots,\ell_{n-1}}(q_n)=S_{a_n}*\tilde C_{\ell_1,\ldots,\ell_{n-1}}(q_n).\]
Using the fact that $S_{a_n}$ has real coefficients, we can then write
\[R_n(q_1,\ldots,q_n)=S_{a_n}*\sum_{\ell_k \in \{0,1\}}q_1^{\ell_1}\cdots q_{n-1}^{\ell_{n-1}}\tilde C_{\ell_1,\ldots,\ell_{n-1}}(q_n)=S_{a_n}*P_n(q_1,\ldots,q_n),\]
proving that $P$ vanishes on $\mathbb{S}_{a_1}\times\cdots \times\mathbb S_{a_n}$.
\end{proof}

\noindent Combining Lemmas \ref{sfera1} and \ref{intsfere}, we obtain the following.

\begin{proposition}\label{caso1}
	If $I$ is a right ideal contained in $E_{(a_1,\ldots,a_n)}$ with $a_\ell a_{\ell+1}\neq a_{\ell+1}a_\ell$ for any $\ell= 1,\ldots,k$, then $I\subset E_{\mathbb S_{a_1}\times \cdots \times \mathbb S_{a_k}\times\{a_{k+1}\}\cdots\times \{a_n\}}$.
\end{proposition}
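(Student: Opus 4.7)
The plan is to prove the statement by induction on $k$. The base case $k=1$ is precisely Lemma \ref{sfera1}. For the inductive step I would assume that every right ideal contained in $E_{(a_1,\ldots,a_n)}$ satisfying the non-commutativity hypothesis for indices $1,\ldots,k-1$ is already contained in $E_{\mathbb S_{a_1}\times\cdots\times\mathbb S_{a_{k-1}}\times\{a_k\}\times\cdots\times\{a_n\}}$, and then use the extra hypothesis $a_k a_{k+1}\neq a_{k+1}a_k$ to enlarge the $k$-th slot from $\{a_k\}$ to $\mathbb S_{a_k}$.

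For the inductive step, fix $P\in I$. From the definition of the slice product one has $P*q_{k+1}=q_{k+1}*P$, so the right ideal property gives $q_{k+1}*P\in I$. A direct computation, analogous to the one in the proof of Lemma \ref{sfera1}, shows that whenever $q_{k+1}\neq 0$,
\[(q_{k+1}*P)(q_1,\ldots,q_n)=q_{k+1}\cdot P(q_{k+1}^{-1}q_1 q_{k+1},\ldots,q_{k+1}^{-1}q_k q_{k+1},q_{k+1},q_{k+2},\ldots,q_n).\]
Applying the inductive hypothesis to $q_{k+1}*P$ and evaluating at $(b_1,\ldots,b_{k-1},a_k,a_{k+1},a_{k+2},\ldots,a_n)$ with $b_i\in\mathbb S_{a_i}$ yields zero; since $a_{k+1}\neq 0$ (it fails to commute with $a_k$, so it is not real), this translates into
\[P\bigl(a_{k+1}^{-1}b_1 a_{k+1},\ldots,a_{k+1}^{-1}b_{k-1}a_{k+1},a_{k+1}^{-1}a_k a_{k+1},a_{k+1},a_{k+2},\ldots,a_n\bigr)=0.\]
Because conjugation by $a_{k+1}$ is a bijection of each $\mathbb S_{a_i}$, letting the $b_i$ range over the product of spheres shows that $P$ vanishes identically on $\mathbb S_{a_1}\times\cdots\times\mathbb S_{a_{k-1}}\times\{a_{k+1}^{-1}a_k a_{k+1}\}\times\{a_{k+1}\}\times\cdots\times\{a_n\}$.

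To finish, for each fixed $(\tilde b_1,\ldots,\tilde b_{k-1})\in\mathbb S_{a_1}\times\cdots\times\mathbb S_{a_{k-1}}$, the one-variable slice regular polynomial $\hat P(q_k):=P(\tilde b_1,\ldots,\tilde b_{k-1},q_k,a_{k+1},\ldots,a_n)$ vanishes at both $a_k$ (by the inductive hypothesis applied to $P$) and at $a_{k+1}^{-1}a_k a_{k+1}$ (by the previous step). These are distinct points of the same sphere $\mathbb S_{a_k}$, because $a_k a_{k+1}\neq a_{k+1}a_k$. Invoking Theorem 3.1 in \cite{libroGSS} as in the proof of Lemma \ref{sfera1}, $\hat P$ must then vanish on all of $\mathbb S_{a_k}$, and letting $(\tilde b_1,\ldots,\tilde b_{k-1})$ vary yields the desired conclusion. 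The step that requires most care is the conjugation manoeuvre: one must use that $a_{k+1}\neq 0$ in order to invert it, and that $b\mapsto a_{k+1}^{-1}b a_{k+1}$ restricts to a bijection of each sphere $\mathbb S_{a_i}$, so that a single extra zero in $\mathbb S_{a_k}$ produced via $q_{k+1}*P$ suffices, in combination with the one-variable vanishing result, to inflate $\{a_k\}$ to the whole sphere.
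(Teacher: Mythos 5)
Your overall strategy (induction on $k$, right-multiplying by $q_{k+1}$ to produce the conjugated zero $a_{k+1}^{-1}a_k a_{k+1}$, then inflating $\{a_k\}$ to $\mathbb S_{a_k}$) is the same iterative scheme the paper uses, and everything up to and including the conclusion that $P$ vanishes on both $\mathbb S_{a_1}\times\cdots\times\mathbb S_{a_{k-1}}\times\{a_k\}\times\{a_{k+1}\}\times\cdots\times\{a_n\}$ and $\mathbb S_{a_1}\times\cdots\times\mathbb S_{a_{k-1}}\times\{a_{k+1}^{-1}a_k a_{k+1}\}\times\{a_{k+1}\}\times\cdots\times\{a_n\}$ is correct. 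The gap is in the last step. For $k\ge 2$ the function $\hat P(q_k):=P(\tilde b_1,\ldots,\tilde b_{k-1},q_k,a_{k+1},\ldots,a_n)$ is \emph{not} a slice regular polynomial in the single variable $q_k$: the substituted values $\tilde b_1,\ldots,\tilde b_{k-1}$ are non-real quaternions sitting to the \emph{left} of the powers of $q_k$, so $\hat P$ has the form $\sum_m c_m q_k^{e_m} d_m$ with non-real left coefficients $c_m$. Theorem 3.1 of \cite{libroGSS} does not apply to such functions, and the two-zeros-on-a-sphere criterion genuinely fails for them: for instance $f(q)=q-jqj$ vanishes at $i$ and at $-i$ but $f(j)=2j\neq 0$, so two zeros on $\mathbb S$ do not force a spherical zero once left coefficients are present.

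The information you have gathered is nevertheless exactly the hypothesis of the paper's Lemma \ref{intsfere} (applied to the $k$-variable restriction $P(q_1,\ldots,q_k,a_{k+1},\ldots,a_n)$, in which $q_k$ is the \emph{last} variable): vanishing on two parallel copies $\mathbb S_{a_1}\times\cdots\times\mathbb S_{a_{k-1}}\times\{a_k\}$ and $\mathbb S_{a_1}\times\cdots\times\mathbb S_{a_{k-1}}\times\{\tilde a_k\}$ with $\tilde a_k\in\mathbb S_{a_k}$, $\tilde a_k\neq a_k$. That lemma reaches the conclusion you want, but it does so structurally rather than pointwise: it uses the Euclidean division underlying Proposition \ref{Scom} to write the relevant remainder as $\sum q_1^{\ell_1}\cdots q_{k-1}^{\ell_{k-1}}C_{\ell_1,\ldots,\ell_{k-1}}(q_k)$, so that the $q_k$-dependence is collected into genuine one-variable slice regular polynomials $C_{\ell_1,\ldots,\ell_{k-1}}$ with coefficients on the right, to which the one-variable two-zeros theorem legitimately applies. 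Replacing your pointwise evaluation argument by an appeal to Lemma \ref{intsfere} repairs the proof and makes it coincide with the paper's.
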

\begin{proof}
	Since $a_1a_2\neq a_2a_1$, applying Lemma \ref{sfera1}, we get that $I\subset E_{\mathbb S_{a_1}\times \{a_{2}\}\cdots\times \{a_n\}}$.
	Let $P\in I$ and consider $P*q_3$ which still belongs to $I$. Then, { if $q_3\neq 0$},
	\[P*q_3 = q_3P(q_3^{-1}q_1q_3, q_3^{-1}q_2q_3,q_3\ldots,q_n)\] 
	vanishes on $\mathbb S_{a_1}\times\{a_2\}\times\cdots\times\{a_n\}$. 
Let $\tilde a_2=a_3^{-1}a_2a_3\neq a_2$.
Then for any $q_1\in \mathbb S_{a_1}$, $P(q_1,\tilde a_2,a_3,\ldots,a_n)=0$ and $P(q_1, a_2,a_3,\ldots,a_n)=0$.
Then, thanks to Lemma \ref{intsfere}, $\hat P(q_1,q_2)=P(q_1,q_2,a_3,\ldots,a_n)$   vanishes on $\mathbb S_{a_1} \times \mathbb S_{a_2}$, and hence $P \in E_{\mathbb S_{a_1}\times \mathbb S_{a_2}\times\{a_{3}\}\cdots\times \{a_n\}}$.
We can repeat this argument untill we find the first commuting pair of components $a_{k+1},a_{k+2}$, showing that 
$P\in E_{\mathbb S_{a_1}\times \cdots \times \mathbb S_{a_k}\times\{a_{k+1}\}\times\cdots\times \{a_n\}}$.
\end{proof}

\subsection{Slice regular polynomials which vanish on spherical sets}

We begin with defining the main geometrical objects we will deal with in this subsection.

\begin{definition}
For any $(a_1,\ldots,a_n)\in \mathbb {H}^n$ we denote by $\mathbb S_{(a_1,\ldots,a_n)}$ the set obtained {\em rotating} simultaneously every coordinate of the point $(a_1,\ldots,a_n)$, that is
\[\mathbb S_{(a_1,\ldots,a_n)}=\{(g^{-1}a_1g, \ldots, g^{-1}a_ng) : g \in \mathbb{H}\setminus\{0\} \}.\]
 Any set of this form will be called a {\em spherical set}. In particular, if $(a_1,\ldots,a_n)$ has commuting components, i.e.  $a_la_m=a_ma_l$ for any  $l,m$, then the spherical set will be said an {\em arranged} spherical set. 

\end{definition}	
\noindent Notice that each point in an arranged spherical set 
 has commuting components. In particular, if each $a_j\in\mathbb{R}, \ j=1, \ldots, n$, then
 $\mathbb S_{(a_1,\ldots,a_n)}=(a_1,\ldots,a_n)$. {If some of the $a_l$ are reals, we obtain a \textquotedblleft pinched" spherical set}.
	Observe that,  for any $(a_1,\ldots,a_k)\in \mathbb H^k$, the set $\mathbb S_{(a_1,\ldots,a_k)}$ is a subset of $\mathbb S_{a_1}\times\cdots \times \mathbb S_{a_k}$. Therefore $E_{\mathbb S_{a_1}\times \cdots \times \mathbb S_{a_k}\times \{a_{k+1}\} \times \cdots\times \{  a_{n}\}}$ is contained in $E_{\mathbb S_{(a_1,\cdots,a_k)}\times \{a_{k+1}\} \times \cdots\times \{  a_{n}\}}$.

\noindent In $\mathbb{H}^n$, arranged spherical sets play the role of spherical zeros of  slice regular polynomials in one quaternionic variable; a first result in this sense is the following

\begin{proposition}\label{zerisferici}
	A slice regular polynomial with real coefficients $R\in
        \mathbb R[q_1,\ldots,q_n]$ vanishes on a point
        $(a_1,\ldots,a_n)\in \mathbb H^n$, then it vanishes on the
        entire spherical set $\mathbb S_{(a_1,\ldots,a_n)}$.
\end{proposition}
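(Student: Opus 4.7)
The plan is to exploit the fact that conjugation $x \mapsto g^{-1}xg$ is a ring automorphism of $\mathbb{H}$ fixing $\mathbb{R}$ pointwise; combined with a simultaneous-conjugation identity for monomials, this reduces evaluation at a rotated point to conjugation of the original value.

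First I would write $R(q_1,\ldots,q_n)=\sum_\ell q_1^{\ell_1}\cdots q_n^{\ell_n}c_\ell$ with $c_\ell\in\mathbb{R}$, pick an arbitrary $g\in\mathbb{H}\setminus\{0\}$, and set $(b_1,\ldots,b_n)=(g^{-1}a_1g,\ldots,g^{-1}a_ng)$. The elementary identity $(g^{-1}a_ig)^{\ell_i}=g^{-1}a_i^{\ell_i}g$ (a straightforward telescoping) lets me collapse each monomial:
\[
b_1^{\ell_1}\cdots b_n^{\ell_n}=(g^{-1}a_1^{\ell_1}g)(g^{-1}a_2^{\ell_2}g)\cdots(g^{-1}a_n^{\ell_n}g)=g^{-1}\bigl(a_1^{\ell_1}a_2^{\ell_2}\cdots a_n^{\ell_n}\bigr)g.
\]
Since each $c_\ell$ is real, it commutes with $g$ and $g^{-1}$, so $g^{-1}a_1^{\ell_1}\cdots a_n^{\ell_n}g\cdot c_\ell=g^{-1}(a_1^{\ell_1}\cdots a_n^{\ell_n}c_\ell)g$. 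Summing over $\ell$ yields
\[
R(b_1,\ldots,b_n)=g^{-1}\,R(a_1,\ldots,a_n)\,g=g^{-1}\cdot 0\cdot g=0.
\]

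Since $g$ was arbitrary, $R$ vanishes on every point of $\mathbb{S}_{(a_1,\ldots,a_n)}$, which is exactly the claim. I do not foresee any genuine obstacle: the entire argument rests on the two observations that conjugation commutes with powers and fixes real scalars, both of which are immediate from the ring structure of $\mathbb{H}$. The only mild care required is to ensure the coefficients appear on the correct side of the monomials so that reality of $c_\ell$ can be invoked to slip them past $g$; this is automatic in the paper's convention $q_1^{\ell_1}\cdots q_n^{\ell_n}a_{\ell_1,\ldots,\ell_n}$.
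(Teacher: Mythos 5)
Your proof is correct and follows exactly the paper's argument: both write $R$ as a sum of monomials with real coefficients, use the identity $(g^{-1}a_ig)^{\ell}=g^{-1}a_i^{\ell}g$ to telescope the conjugations, and conclude $R(g^{-1}a_1g,\ldots,g^{-1}a_ng)=g^{-1}R(a_1,\ldots,a_n)g=0$. No differences worth noting.
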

\begin{proof}
	Let $R(q_1,\ldots,q_n)=\sum q_1^{\ell_1}\cdots q_n^{\ell_n} r_{\ell_1,\ldots, \ell_n}$ with $r_{\ell_1,\ldots, \ell_n}\in \mathbb R$.
	Then, for any $g\in\mathbb H\setminus\{0\}$, one has 
			\begin{equation}
	\begin{aligned}
					R(g^{-1}q_1g,\ldots,g^{-1}q_ng)&=\sum g^{-1}q_1^{\ell_1}g\cdots g^{-1}q_n^{\ell_n} g \, r_{\ell_1,\ldots, \ell_n}\\
					&=g^{-1}R(q_1,\ldots,q_n)g.
		 \end{aligned}
		\end{equation}
\end{proof}

\noindent Another geometrical object with spherical symmetry central for our investigation is the following.
{\begin{definition}
		Any set of the form 
		\[\mathbb S_{(a_1,\ldots,a_t)}\times \{a_{t+1}\}\times \cdots \times \{a_n\}\] 
		where 
		$\mathbb S_{(a_1,\ldots,a_t)}$ is an arranged spherical set, and $a_{l}a_m=a_m a_{l}$ for any $l, m > t$ will be called a {\em balloon}. 
\end{definition}}
\noindent We want to describe the set of slice regular polynomials which vanish on arranged spherical sets and on balloons.

%
{\begin{proposition}
 		Let $\mathbb{S}_{(a_1,\cdots,a_n)}\subset\mathbb{H}^n$ be an arranged spherical set and suppose $a_\ell \not \in \mathbb{R}$ for any $\ell$. A slice regular polynomial $P\in\mathbb{H}[q_1,\ldots,q_n]$ vanishes on  $\mathbb{S}_{(a_1,a_2,\cdots,a_n)}$ if and only if $P$ is of the form
 		\[Q_1*P_1(q_1,\ldots,q_n)+Q_2*P_2(q_2,\ldots,q_n)+\cdots+Q_{n-1}*P_{n-1}(q_{n-1},q_n)+S_{a_n}*P_n(q_n)\]
 		where
 		$Q_\ell$ are the polynomials 
 		\begin{equation}\label{Ql}
 			Q_{\ell}(q_{\ell},q_{\ell+1})=q_{\ell}+q_{\ell +1} \gamma_{\ell} +\delta_{\ell}
 		\end{equation}
 		with real coefficients
 		\[\gamma_{\ell}= -\dfrac{|{\rm Im}  (a_{\ell})|}{|{\rm Im} ( a_{\ell +1})|}, \quad \text{and} \quad \delta_{\ell}=-{\rm Re}  (a_{\ell})+\dfrac{{\rm Re}  (a_{\ell+1})|{\rm Im}  (a_{\ell})|}{|{\rm Im}  (a_{\ell +1})|}, \]
 	for any $\ell=1,\ldots,n-1$, if ${\rm Im}(a_{\ell}){\rm Im }(a_{\ell+1})>0$; 
 	while
 		\[\gamma_{\ell}=\dfrac{|{\rm Im}  (a_{\ell})|}{|{\rm Im} ( a_{\ell +1})|}, \quad \text{and} \quad \delta_{\ell}=-{\rm Re}  (a_{\ell})-\dfrac{{\rm Re}  (a_{\ell+1})|{\rm Im}  (a_{\ell})|}{|{\rm Im}  (a_{\ell +1})|}, \]
	for any $\ell=1,\ldots,n-1$, if ${\rm Im}(a_{\ell}){\rm Im }(a_{\ell+1})<0$. 
\end{proposition}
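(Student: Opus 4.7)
The plan is to prove both implications separately, with the converse (``only if'') direction handled by induction on $n$.

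\textbf{For the ``if'' direction,} I would first check by direct substitution that each polynomial $Q_\ell$ satisfies $Q_\ell(a_\ell, a_{\ell+1}) = 0$, splitting according to the sign of ${\rm Im}(a_\ell){\rm Im}(a_{\ell+1})$ (equivalently, whether ${\rm Im}(a_\ell)$ and ${\rm Im}(a_{\ell+1})$, regarded as vectors in $\mathbb{R}^3$ pointing along the common slice $\mathbb{C}_J$, have parallel or antiparallel orientation). Since each $Q_\ell$ has real coefficients, Proposition \ref{zerisferici} then implies that $Q_\ell$ vanishes on the whole spherical set $\mathbb{S}_{(a_\ell,a_{\ell+1})}$, and in particular on the projection of $\mathbb{S}_{(a_1,\ldots,a_n)}$ onto its $\ell$-th and $(\ell+1)$-th coordinates. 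Likewise $S_{a_n}$ has real coefficients and vanishes on $\mathbb{S}_{a_n}$. Because the coordinates of any point of $\mathbb{S}_{(a_1,\ldots,a_n)}$ commute (the arranged-spherical condition is preserved under simultaneous conjugation), Proposition \ref{prodstar} forces each summand $Q_\ell * P_\ell$ and $S_{a_n} * P_n$ to vanish on $\mathbb{S}_{(a_1,\ldots,a_n)}$.

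\textbf{For the converse,} I would proceed by induction on $n$. The base case $n=1$ is the classical one-variable statement that a slice regular polynomial vanishes on $\mathbb{S}_{a_1}$ if and only if it is left-divisible by $S_{a_1}$ (see for instance \cite[Theorem 3.1]{libroGSS}). For the inductive step, given $P$ vanishing on $\mathbb{S}_{(a_1,\ldots,a_n)}$, I would perform Euclidean division of $P$ by $Q_1(q_1,q_2)$, which is monic of degree one in $q_1$, using Proposition 3.2 of \cite{Nul1}: this yields $P = Q_1 * \widetilde{P}_1 + R$ with $R \in \mathbb{H}[q_2,\ldots,q_n]$. By the ``if'' direction, $Q_1*\widetilde{P}_1$ vanishes on $\mathbb{S}_{(a_1,\ldots,a_n)}$, so $R$ does too; since $R$ is independent of $q_1$, it must vanish on the projection of $\mathbb{S}_{(a_1,\ldots,a_n)}$ onto the last $n-1$ coordinates, which is precisely $\mathbb{S}_{(a_2,\ldots,a_n)}$. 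The tuple $(a_2,\ldots,a_n)$ still consists of pairwise commuting non-real components, so the inductive hypothesis applies to $R$ and yields a decomposition $R = Q_2 * P_2 + \cdots + Q_{n-1} * P_{n-1} + S_{a_n} * P_n$; assembled with the term $Q_1 * \widetilde{P}_1$ this gives the desired form.

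The main obstacle is verifying that $Q_\ell(a_\ell, a_{\ell+1}) = 0$ under the correct interpretation of the sign condition on ${\rm Im}(a_\ell){\rm Im}(a_{\ell+1})$; this is the only calculation of substance, and once it is in place the induction runs on two purely formal facts, namely that $Q_1$ is monic in $q_1$ (so Euclidean division is applicable) and that the projection of an arranged spherical set onto any subset of its coordinates is again an arranged spherical set. A minor care-point is that ``vanishing on $\mathbb{S}_{(a_2,\ldots,a_n)}$'' for $R$ is to be read as a statement about the projection being genuinely onto (which is immediate from the simultaneous-rotation parametrization), so that the inductive hypothesis applies verbatim.
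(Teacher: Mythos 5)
Your proposal is correct and follows essentially the same route as the paper: both arguments rest on the direct verification that each real-coefficient $Q_\ell$ vanishes at $(a_\ell,a_{\ell+1})$ and hence, via Proposition \ref{zerisferici}, on the whole arranged spherical set, followed by Euclidean division by the monic polynomials $Q_\ell$ --- the paper carries out the $n-1$ divisions in a single pass and reduces the final remainder $R(q_n)$ to the one-variable case, whereas you package the identical reduction as an induction on $n$. The interpretation you adopt for the sign of ${\rm Im}(a_\ell){\rm Im}(a_{\ell+1})$ (parallel versus antiparallel imaginary parts on the common slice) is indeed the one under which $Q_\ell(a_\ell,a_{\ell+1})=0$ checks out.
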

} 
\begin{proof}
By direct computation, each $Q_\ell$ vanishes on $(a_\ell,a_{\ell+1})$ and hence, considered as a polynomial in $\mathbb H[q_1,\ldots,q_n]$, also on $(a_1,\ldots,a_n)$. Since they have real coefficients, they all vanish on $\mathbb{S}_{(a_1,a_2,\cdots,a_n)}$ (see Proposition \ref{zerisferici}).
Consider $P\in E_{\mathbb S_{(a_1,a_2,a_3,\ldots, a_n)}}$. Performing subsequent divisions by the monic polynomials $Q_\ell$, as in \cite[Proposition 3.2]{Nul1}, we can write $P$ as
\[Q_1*P_1(q_1,\ldots,q_n)+Q_2*P_2(q_2,\ldots,q_n)+\cdots+Q_{n-1}*P_{n-1}(q_{n-1},q_n)+R(q_n).\]
Since $P,Q_1,\ldots,Q_{n-1}$ vanish on $\mathbb S_{(a_1,a_2,a_3,\ldots, a_n)}$ and $R$ depends only on the variable $q_n$, we get that $R(q_n)$ vanishes on $\mathbb S_{a_n}$. Recalling the one-variable theory \cite[Proposition 3.17]{libroGSS}, we get that $R(q_n)=(q_n-a_n)^s*P_n(q_n)=S_{a_n}*P_n(q_n)$.
\end{proof}
 \noindent  
{It is easy to generalize the previous result to the case of slice regular polynomials 
which vanish on {balloons} of the form  $\mathbb S_{(a_1,\ldots,a_k)}\times \{a_{k+1}\} \times \cdots\times \{ a_{n}\}$, obtaining a characterization analogue to that given in Formula \eqref{spezzo} of Proposition \ref{Scom}.
}
{\begin{corollary}\label{formacinese}
 A slice regular polynomial $P$ vanishes on the balloon $\mathbb S_{(a_1,\ldots,a_k)}\times \{a_{k+1}\} \times \cdots\times \{ a_{n}\}$  if and only if $P$ can be written as
	\begin{equation*}
		\begin{aligned}
			&\sum_{\ell=1}^{k-1}Q_\ell*P_\ell(q_\ell,\ldots,q_n)+S_{a_k}*P_k(q_k, \ldots, q_n)+\sum_{\ell=k+1}^n(q_\ell-a_\ell)*P_\ell(q_1,\ldots,q_n),		
		\end{aligned}
	\end{equation*}
	where the slice regular polynomials $Q_\ell$ are defined as in \eqref{Ql} and $P_\ell$ are slice regular polynomials for any $\ell$.
\end{corollary}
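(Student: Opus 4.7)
The plan is to deduce the corollary by combining the two main structural results of this subsection: the characterization for arranged spherical sets in the preceding proposition and Proposition \ref{Scom} for polynomials vanishing on products of spheres and points. As in those earlier proofs, the strategy splits naturally into an \textbf{if} direction (checking each summand vanishes) and an \textbf{only if} direction (iterated Euclidean division with remainder).

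For the \emph{if} direction, I would check that each of the three families of summands vanishes on the balloon $\mathbb S_{(a_1,\ldots,a_k)}\times\{a_{k+1}\}\times\cdots\times\{a_n\}$. Each $Q_\ell(q_\ell,q_{\ell+1})$ has real coefficients and, by the choice of $\gamma_\ell,\delta_\ell$, vanishes at $(a_\ell,a_{\ell+1})$; so regarded as an element of $\mathbb H[q_1,\ldots,q_n]$ with real coefficients, Proposition \ref{zerisferici} shows it vanishes on $\mathbb S_{(a_1,\ldots,a_n)}$ and in particular on the balloon (a subset of it). The polynomial $S_{a_k}$ also has real coefficients and vanishes on $\mathbb S_{a_k}$, hence on the balloon. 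Since $Q_\ell$ and $S_{a_k}$ have real coefficients, their $*$-product with any $P_\ell$ agrees with the pointwise product, so the vanishing is preserved. Finally, for $\ell>k$ the terms $(q_\ell-a_\ell)*P_\ell$ vanish on $\mathbb H^{\ell-1}\times\{a_\ell\}\times (C_{a_\ell})^{n-\ell}$ by Proposition \ref{mlineare}, and the balloon lies in this set because $a_{l}a_m=a_m a_l$ for all $l,m>k$.

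For the \emph{only if} direction, I would iterate the Euclidean division procedure of Lemma 3.4 (itself based on Proposition 3.2 of \cite{Nul1}), dividing $P$ successively by the monic polynomials $Q_1,\ldots,Q_{k-1}$ (each monic of degree $1$ in $q_\ell$ respectively), then by $S_{a_k}$ (monic of degree $2$ in $q_k$), then by $(q_{k+1}-a_{k+1}),\ldots,(q_n-a_n)$ (each monic of degree $1$ in the corresponding variable). Since the divisors involve pairwise distinct leading variables, the same bookkeeping as in Lemma 3.4 shows that the final remainder $R$ has degree $0$ in every variable except $q_k$, and degree at most $1$ in $q_k$; thus $R$ is a slice regular polynomial in the single variable $q_k$ of degree at most $1$. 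Writing
\[
P=\sum_{\ell=1}^{k-1}Q_\ell*P_\ell+S_{a_k}*P_k+\sum_{\ell=k+1}^n (q_\ell-a_\ell)*P_\ell+R(q_k),
\]
and using the \emph{if} direction just proved, $R$ must vanish on the balloon. But the projection of the balloon onto the $k$-th coordinate is $\mathbb S_{a_k}$, an infinite set (assuming $a_k\notin\mathbb R$, as in the preceding proposition), so the one-variable polynomial $R(q_k)$ of degree at most $1$ has infinitely many zeros and must therefore be identically zero, yielding the claimed decomposition.

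The main technical obstacle is the careful bookkeeping in the iterated division: one must verify that dividing by a polynomial monic in a specific variable preserves the property ``has degree zero'' already obtained in the previously handled variables. This is exactly the kind of argument carried out in Lemma 3.4 for two divisions, and it extends by induction to a finite sequence of divisions as long as the leading variables of successive divisors are distinct, which is the case here. Once this is settled, the reduction of $R$ to a polynomial in $q_k$ alone is automatic, and the conclusion follows immediately from the one-variable theory.
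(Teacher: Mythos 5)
Your proof is correct and takes essentially the same route the paper intends: the paper presents this corollary as an ``easy generalization'' of the preceding proposition on arranged spherical sets, and your argument fills in the details exactly as that proposition's proof does, namely iterated Euclidean division by the monic polynomials $Q_1,\ldots,Q_{k-1}$, $S_{a_k}$ and the linear factors $(q_\ell-a_\ell)$, combined with the vanishing checks from Propositions \ref{zerisferici} and \ref{mlineare} for the ``if'' direction and the one-variable conclusion on $\mathbb S_{a_k}$ for the remainder. The only point worth recording explicitly is the standing hypothesis $a_\ell\notin\mathbb R$ for $\ell\le k$ (needed both for the $Q_\ell$ to be defined and for your final step), which you correctly flag.
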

}

\noindent {
	As a consequence, we have that the set of polynomials vanishing on a balloon is indeed a right ideal.
	
	\begin{proposition}\label{idealcinese}
		Let $\mathbb S_{(a_1,\cdots,a_k)}\times \{a_{k+1}\}\times \cdots \times \{a_n\}$ be a balloon. Then the set	$E_{\mathbb S_{(a_1,\cdots,a_k)}\times \{a_{k+1}\}\times \cdots \times \{a_n\}}$ is a right ideal of $\mathbb{H}[q_1,\ldots,q_n]$ contained in $E_{(a_1,\ldots,a_{n})}$.
	\end{proposition}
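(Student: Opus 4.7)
The set $E_B$ (where $B$ denotes the balloon $\mathbb S_{(a_1,\ldots,a_k)}\times\{a_{k+1}\}\times\cdots\times\{a_n\}$) is clearly closed under addition, and it is contained in $E_{(a_1,\ldots,a_n)}$ because $(a_1,\ldots,a_n)$ is itself a point of $B$. The real content of the statement is closure under right multiplication: if $P\in E_B$ and $R\in\mathbb H[q_1,\ldots,q_n]$, then $P*R\in E_B$.

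The plan is to invoke Corollary \ref{formacinese} as a canonical form for $P$, writing
$$P=\sum_{\ell=1}^{k-1} Q_\ell*A_\ell+S_{a_k}*A_k+\sum_{\ell=k+1}^{n}(q_\ell-a_\ell)*B_\ell,$$
and then use associativity of the $*$-product to rewrite
$$P*R=\sum_{\ell=1}^{k-1} Q_\ell*(A_\ell*R)+S_{a_k}*(A_k*R)+\sum_{\ell=k+1}^{n}(q_\ell-a_\ell)*(B_\ell*R).$$
I will then verify that every summand vanishes on $B$. For $\ell>k$, each term $(q_\ell-a_\ell)*(B_\ell*R)$ vanishes on $\mathbb H^{\ell-1}\times\{a_\ell\}\times (C_{a_\ell})^{n-\ell}$ by Proposition \ref{mlineare}, and this set contains $B$ because $a_la_m=a_ma_l$ for $l,m>k$. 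For the summands involving $Q_\ell$ and $S_{a_k}$, the key point is that both polynomials have real coefficients; expanding $A_\ell*R$ and $A_k*R$ monomial by monomial and evaluating at a balloon point $\mathbf b=(b_1,\ldots,b_k,a_{k+1},\ldots,a_n)$, an identity analogous to \eqref{polyreali} will let me pull $Q_\ell(b_\ell,b_{\ell+1})$ or $S_{a_k}(b_k)$ out of the expression, and both of these factors vanish by the very definition of the balloon.

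The hard part will be the last step: a generic point of $B$ does not have all components commuting (the rotated block $(b_1,\ldots,b_k)$ need not commute with $(a_{k+1},\ldots,a_n)$), so Proposition \ref{prodstar} is not directly applicable. What I will use instead is that $Q_\ell$ involves only the variables $q_\ell,q_{\ell+1}$ with $\ell+1\le k$ and $S_{a_k}$ involves only $q_k$; the corresponding evaluations $b_\ell,b_{\ell+1},b_k$ all sit inside the arranged spherical block $(b_1,\ldots,b_k)$, whose entries do mutually commute. This is precisely what lets the real-coefficient factor be pulled across the remaining monomial variables to produce an overall factor $Q_\ell(b_\ell,b_{\ell+1})=0$ or $S_{a_k}(b_k)=0$, forcing the entire summand, and hence $P*R$, to vanish on $B$.
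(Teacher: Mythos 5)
Your proposal is correct and follows essentially the same route as the paper's proof: both decompose $P$ via Corollary \ref{formacinese}, handle the $(q_\ell-a_\ell)$ terms with Proposition \ref{mlineare}, and exploit the real coefficients of $Q_\ell$ and $S_{a_k}$ through the identity \eqref{polyreali} to pull out a vanishing factor. Your extra remark that the commutation needed for this factorization only involves variables inside the arranged block is a worthwhile clarification of the paper's terser ``arguing as in Equation \eqref{polyreali}'', but it is not a different argument.
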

	
	\begin{proof} 
		
		Let $P \in E_{\mathbb S_{(a_1,\cdots,a_k)}\times \{a_{k+1}\}\times \cdots \times \{a_n\}}$. We have to prove that $P*Q$ still vanishes on the balloon $\mathbb S_{(a_1,\cdots,a_k)}\times \{a_{k+1}\}\times \cdots \times \{a_n\}$ for every $Q\in \mathbb{H}[q_1,\ldots,q_n]$. Thanks to Corollary \ref{formacinese},  
		\[P(q_1,\ldots,q_n)=	\sum_{\ell=1}^{k-1}Q_\ell*P_\ell(q_\ell,\ldots,q_n)+S_{a_k}*P_k(q_k, \ldots, q_n)+\sum_{\ell=k+1}^n(q_\ell-a_\ell)*P_\ell(q_1,\ldots,q_n).\] 
		Since $Q_\ell$ and $S_{a_k}$ have real coefficients, arguing as in Equation \eqref{polyreali}, we get that
		$Q_\ell*R$ and $S_{a_k}*R$ still vanish on the balloon $\mathbb S_{(a_1,\cdots,a_k)}\times \{a_{k+1}\}\times \cdots \times \{a_n\}$ for every $R \in \mathbb H[q_1,\ldots,q_n]$ and for every $\ell$. 
		Recalling Proposition \ref{mlineare}, we conclude that $P*Q \in E_{\mathbb S_{(a_1,\cdots,a_k)}\times \{a_{k+1}\}\times \cdots \times \{a_n\}}$ for any $Q\in \mathbb H[q_1,\ldots,q_n]$.
		\end{proof}
}
\noindent In the last part of this subsection we use the information on some common zeros of slice regular polynomials in a right ideal to describe an enlarged common vanishing set of the same polynomials.

\noindent Let us now prove the following simplified version of the Representation Formula for slice regular polynomials in several variables (see \cite[Proposition 2.12]{severalvariables}), which holds on the {\em spherical cylinder} $\bigcup\limits_{J\in \mathbb S}\mathbb C_J^n$.
\begin{proposition}\label{RepFor}
	Let $P\in \mathbb H[q_1,\ldots,q_n]$ and let $J,K \in \mathbb S$. If $w_1,\ldots,w_n \in \mathbb {C}_J$ and $z_1, \ldots, z_n$ are their ``shadows" on $\mathbb{C}_K$, that is if $w_\ell=x_\ell+y_\ell J$ with $y_\ell \ge 0$, then $z_\ell=x_\ell + y_\ell K$, then
	\begin{equation}\label{RepFor1}
		P(w_1,\ldots, w_n)=\frac{1-JK}{2}P(z_1,\ldots,z_n)+\frac{1+JK}{2}P(\bar z_1,\ldots,\bar z_n).\end{equation}   
\end{proposition}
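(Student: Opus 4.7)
The plan is to reduce the identity to a monomial-by-monomial check via $\mathbb{R}$-linearity, and then verify the monomial case by a direct computation in $\mathbb{H}$. Writing $P(\mathbf{q}) = \sum_\ell \mathbf{q}^\ell a_\ell$ and setting $W_\ell := w_1^{\ell_1}\cdots w_n^{\ell_n}$, $Z_\ell := z_1^{\ell_1}\cdots z_n^{\ell_n}$, one has $P(w_1,\ldots,w_n) = \sum_\ell W_\ell a_\ell$ and analogously $P(z_1,\ldots,z_n) = \sum_\ell Z_\ell a_\ell$, $P(\bar z_1,\ldots,\bar z_n) = \sum_\ell \bar Z_\ell a_\ell$. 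Since the prefactors $(1\pm JK)/2$ act on the left of these sums while the coefficients $a_\ell$ sit on the right, it suffices to prove, for every multi-index $\ell$, the identity
\[
W_\ell = \frac{1-JK}{2}\, Z_\ell + \frac{1+JK}{2}\, \bar Z_\ell.
\]

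For the monomial step, I would observe that $w_1,\ldots,w_n$ pairwise commute in $\mathbb{C}_J$, so $W_\ell \in \mathbb{C}_J$; analogously $Z_\ell \in \mathbb{C}_K$. The $\mathbb{R}$-algebra isomorphism $\mathbb{C}_J \to \mathbb{C}_K$ induced by $J \mapsto K$ sends $w_i = x_i + y_i J$ to $z_i = x_i + y_i K$, the sign normalization $y_i \geq 0$ being exactly what makes this correspondence well defined, and hence sends $W_\ell$ to $Z_\ell$. Consequently, writing $W_\ell = A + BJ$ with unique $A,B \in \mathbb{R}$, one has $Z_\ell = A + BK$ and $\bar Z_\ell = A - BK$.

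The verification then reduces to a short computation in $\mathbb{H}$ using $K^2 = -1$: expanding $(1-JK)(A+BK) + (1+JK)(A-BK)$ produces $2A + 2BJ = 2W_\ell$, and dividing by $2$ gives the desired monomial identity. Summing over $\ell$ and invoking the right-sided linearity pointed out above then yields \eqref{RepFor1}.

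The only genuine obstacle is careful bookkeeping of the non-commutative structure of $\mathbb{H}$: one has to be certain that the scalar prefactors $(1\pm JK)/2$ always act on the left and the monomial coefficients $a_\ell$ always on the right, so that no reordering sneaks in when passing from the monomial identity to the statement for a general polynomial. An alternative route via the Splitting Lemma (Lemma \ref{splitting}) is tempting, but the splitting naturally produces a decomposition valid on $\mathbb{C}_K^n$ rather than on the cross-slice points $(w_1,\ldots,w_n) \in \mathbb{C}_J^n$; the direct monomial argument appears cleaner.
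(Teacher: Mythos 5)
Your proposal is correct and follows essentially the same route as the paper's own proof: reduce to a single monomial by right-sided linearity, observe that $w_1^{\ell_1}\cdots w_n^{\ell_n}=A+BJ$ with $A,B\in\mathbb R$ independent of the slice so that $z_1^{\ell_1}\cdots z_n^{\ell_n}=A+BK$, and conclude by expanding $\frac{1-JK}{2}(A+BK)+\frac{1+JK}{2}(A-BK)=A+BJ$. The only difference is cosmetic: you phrase the slice-independence of $A,B$ via the $\mathbb R$-algebra isomorphism $\mathbb C_J\to\mathbb C_K$, while the paper simply asserts it.
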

\begin{proof}
	Let us show the result for the monomial $M(q_1,\ldots,q_n)=q_1^{\ell_1}q_2^{\ell_2}\cdots q_n^{\ell_n}a$, with $a\in \mathbb H$.	
	If $w_1,\ldots,w_n\in \mathbb C_J$, then there exist $A, B \in \mathbb{R}$ such that 
	$w_1^{\ell_1}w_2^{\ell_2}\cdots w_n^{\ell_n}=A+BJ$. 
	Moreover, since $A$ and $B$ do not depend on $J$, 
	if $z_1,\ldots,z_n$ are the "shadows" of $w_1,\ldots,w_n$ on $\mathbb C_K$, then $z_1^{\ell_1}z_2^{\ell_2}\cdots z_n^{\ell_n}=A+BK$. Therefore
	
	\begin{equation*}
		\begin{aligned}
			&\frac{1-JK}{2}M(z_1,\ldots,z_n)+\frac{1+JK}{2}M(\bar z_1,\ldots,\bar z_n)\\
			&=\frac{1-JK}{2}z_1^{\ell_1}z_2^{\ell_2}\cdots z_n^{\ell_n}a+\frac{1+JK}{2}\bar z_1^{\ell_1}\bar z_2^{\ell_2}\cdots \bar z_n^{\ell_n}a\\
			&=\frac{1-JK}{2}(A+BK)a+\frac{1+JK}{2}(A-BK)a=(A+BJ)a=M(w_1,\ldots,w_n).
		\end{aligned}
	\end{equation*}

	
\end{proof}

\noindent A direct application of Proposition \ref{RepFor} which can be regarded as the several variable version of \cite[Theorem 3.1] {libroGSS} is the following

\begin{proposition}\label{zericinesi}
	Let $P\in \mathbb{H}[q_1,\ldots,q_n]$ and let $(a_1,\ldots,a_n)\in \mathbb{C}_L^n$ for some $L\in \mathbb{S}$.
	If $P$ vanishes on two different points of $\mathbb S_{(a_1,\ldots,a_n)}$, then $P$ vanishes on the entire spherical set $\mathbb S_{(a_1,\ldots,a_n)}$.
\end{proposition}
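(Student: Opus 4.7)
Since $(a_1,\ldots,a_n)\in \mathbb{C}_L^n$, we can write $a_\ell = x_\ell + \tilde{y}_\ell L$ with $x_\ell, \tilde{y}_\ell \in \mathbb{R}$. As $g$ ranges over $\mathbb{H}\setminus\{0\}$, the unit $J = g^{-1}Lg$ ranges over $\mathbb{S}$, so the spherical set is parametrized as
\[\mathbb{S}_{(a_1,\ldots,a_n)} = \{(x_1 + \tilde{y}_1 J,\, \ldots,\, x_n + \tilde{y}_n J) : J \in \mathbb{S}\},\]
and each such $n$-tuple lies on the slice $\mathbb{C}_J^n$. The plan is to apply the Representation Formula (Proposition \ref{RepFor}) with $K = L$ to rewrite $P$ at an arbitrary point of $\mathbb{S}_{(a_1,\ldots,a_n)}$ as an affine function of $J \in \mathbb{S}$, and then to use the two given vanishing conditions to conclude that this affine function vanishes identically.

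Concretely, set $A := P(a_1,\ldots,a_n)$ and $B := P(\bar a_1,\ldots,\bar a_n)$. For any $J \in \mathbb{S}$, denote $P_J := P(x_1 + \tilde{y}_1 J,\, \ldots,\, x_n + \tilde{y}_n J)$. Applying Proposition \ref{RepFor} (with shadows on $\mathbb{C}_L$ having the same real coefficients $\tilde{y}_\ell$ as the point itself, taking signs into account as in the proof of that proposition) yields
\[P_J = \frac{1-JL}{2}\,A + \frac{1+JL}{2}\,B = \alpha + J\beta,\]
where $\alpha := \tfrac{A+B}{2}$ and $\beta := \tfrac{L(B-A)}{2}$ are quaternions that depend only on $P$ and on $(a_1,\ldots,a_n)$, and are independent of $J$.

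Now suppose that $P$ vanishes at two distinct points of $\mathbb{S}_{(a_1,\ldots,a_n)}$. Then these correspond to two distinct parameters $J_1, J_2 \in \mathbb{S}$ (they must be distinct, because otherwise not all $\tilde{y}_\ell$ could be zero, contradicting the spherical set being a singleton), and we obtain the system
\[\alpha + J_1 \beta = 0, \qquad \alpha + J_2 \beta = 0.\]
Subtracting gives $(J_1 - J_2)\beta = 0$. Since $J_1 \neq J_2$, the quaternion $J_1 - J_2$ is nonzero and hence invertible in $\mathbb{H}$, forcing $\beta = 0$; substituting back gives $\alpha = 0$. Therefore $P_J = 0$ for every $J \in \mathbb{S}$, which means $P$ vanishes on the whole spherical set $\mathbb{S}_{(a_1,\ldots,a_n)}$.

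The argument is essentially mechanical once the Representation Formula is in hand; the only mild subtlety is the book-keeping in the parametrization $J = g^{-1}Lg$ (making sure that all components of a point in the spherical set share the same imaginary unit $J$, so that the formula indeed applies on $\mathbb{C}_J^n$) and verifying that distinct points of the spherical set genuinely correspond to distinct values of $J$. Once those are checked, the non-commutative left-invertibility of $J_1 - J_2$ closes the argument without further difficulty.
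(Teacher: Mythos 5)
Your proof is correct and rests on the same key tool as the paper's, namely the Representation Formula of Proposition \ref{RepFor}: you package the restriction of $P$ to the spherical set as an affine function $\alpha+J\beta$ of $J\in\mathbb S$ and solve a two-equation linear system, whereas the paper argues in two cases (first a pair of conjugate points, then reducing an arbitrary pair to a conjugate pair), so the difference is purely organizational. Both arguments also tacitly use the signed version of Proposition \ref{RepFor} (shadows taken with the same, possibly negative, coefficients $\tilde y_\ell$), which you correctly flag and which follows verbatim from the paper's proof of that proposition.
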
	
\begin{proof}
	{If $P$ vanishes at two conjugated points $(b_1,\ldots,b_n), (\bar b_1,\ldots,\bar b_n) \in \mathbb C_K^n\cap \mathbb S_{(a_1,\ldots,a_n)}$ and $(c_1,\ldots,c_n)$ is another point in $\mathbb C_J^n\cap \mathbb S_{(a_1,\ldots,a_n)}$, with $J\neq K$, then using Proposition \ref{RepFor}, we can write
		\[P(c_1,\ldots,c_n)=\frac{1-JK}{2}P(b_1,\ldots,b_n)+\frac{1+JK}{2}P(\bar b_1,\ldots,\bar b_n)=0.\]}
	
		%
	\noindent If $P$ vanishes at $(b_1,\ldots,b_n) \in \mathbb C_K^n\cap \mathbb S_{(a_1,\ldots,a_n)}$ and $(c_1,\ldots,c_n)\in \mathbb C_J^n\cap \mathbb S_{(a_1,\ldots,a_n)}$, with $J\neq K$, then using again Proposition \ref{RepFor}, we can write
	\[0=P(c_1,\ldots,c_n)=\frac{1-JK}{2}P(b_1,\ldots,b_n)+\frac{1+JK}{2}P(\bar b_1,\ldots,\bar b_n)=\frac{1+JK}{2}P(\bar b_1,\ldots,\bar b_n).\]   
	Since $J\neq K$, we get that $P(\bar b_1,\ldots,\bar b_n)=0$. From the previous step we conclude that $P$ vanishes on the entire $\mathbb S_{(a_1,\ldots,a_n)}$.
\end{proof}

{\noindent As a consequence of Proposition \ref{zericinesi}, we get the following result which is a generalization of Lemma \ref{sfera1} and the first step in the enlarging procedure of the zero set. 
\begin{lemma}\label{sfere}
	If $I$ is a right ideal in $E_{(a_1,\ldots,a_t,a_{t+1},\ldots,a_n)}$ with $a_la_m=a_ma_l$ for any $l,m \le t$ and $a_ta_{t+1}\neq a_{t+1}a_t$, then $I\subset E_{\mathbb S_{(a_1,\ldots, a_t)}\times \{a_{t+1}\}\times \cdots \times \{a_n\}}$.
\end{lemma}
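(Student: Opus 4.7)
The plan is to adapt the right-multiplication trick from the proof of Lemma \ref{sfera1}, using Proposition \ref{zericinesi} in place of the one-variable identity principle.

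Fix $P\in I$. Since $I$ is a right ideal, the polynomial $P*q_{t+1}=q_{t+1}*P$ belongs to $I$ and hence vanishes at $(a_1,\ldots,a_n)$. Expanding the $*$-product monomial-by-monomial and then factoring $a_{t+1}$ out on the left by conjugation (exactly as in the proof of Lemma \ref{sfera1}, and crucially without requiring any commutativity between $a_{t+1}$ and $a_{t+2},\ldots,a_n$), I would derive
\[0=[q_{t+1}*P](a_1,\ldots,a_n)=a_{t+1}\cdot P\bigl(a_{t+1}^{-1}a_1a_{t+1},\ldots,a_{t+1}^{-1}a_ta_{t+1},a_{t+1},a_{t+2},\ldots,a_n\bigr).\]
Since $a_ta_{t+1}\neq a_{t+1}a_t$ forces $a_{t+1}$ to be non-real and hence invertible, cancelling yields a second zero of $P$ at the point $(a_{t+1}^{-1}a_1a_{t+1},\ldots,a_{t+1}^{-1}a_ta_{t+1},a_{t+1},\ldots,a_n)$. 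The same non-commutativity guarantees $a_{t+1}^{-1}a_ta_{t+1}\neq a_t$, so this point differs from $(a_1,\ldots,a_n)$ already in its first $t$ coordinates.

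The next step is to freeze the last $n-t$ coordinates and set $\hat P(q_1,\ldots,q_t):=P(q_1,\ldots,q_t,a_{t+1},\ldots,a_n)\in\mathbb H[q_1,\ldots,q_t]$. By what precedes, $\hat P$ vanishes at two distinct points of the spherical set $\mathbb S_{(a_1,\ldots,a_t)}$. Because $a_1,\ldots,a_t$ pairwise commute, they share a common slice $\mathbb C_L^t$ for some $L\in\mathbb S$, so Proposition \ref{zericinesi} applies and forces $\hat P$ to vanish on all of $\mathbb S_{(a_1,\ldots,a_t)}$. Translated back, $P$ vanishes on the balloon $\mathbb S_{(a_1,\ldots,a_t)}\times\{a_{t+1}\}\times\cdots\times\{a_n\}$, and since $P\in I$ was arbitrary, the desired inclusion follows.

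The only real subtlety is the computational verification of the conjugation identity displayed above; nothing deep is required beyond the observation that in the polynomial ring $(\mathbb H[q_1,\ldots,q_n],+,*)$ the indeterminates commute with one another, which lets the formal rewriting go through regardless of the non-commutativities among the $a_i$. With that identity in hand, Proposition \ref{zericinesi} does the rest of the work.
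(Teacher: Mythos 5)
Your proposal is correct and follows essentially the same route as the paper's proof: right-multiply by $q_{t+1}$ to stay in the ideal, use the pointwise conjugation identity to produce the second zero $(a_{t+1}^{-1}a_1a_{t+1},\ldots,a_{t+1}^{-1}a_ta_{t+1},a_{t+1},\ldots,a_n)$, and then apply Proposition \ref{zericinesi} to the restricted polynomial $\hat P$. Your explicit checks that the two zeros are distinct and that $a_1,\ldots,a_t$ lie on a common slice are details the paper leaves implicit, but the argument is the same.
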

\begin{proof}
	Let $P\in I$. Then $P*q_{t+1}= q_{t+1}*P \in I$. By direct computation, arguing as in the proof of Lemma \ref{sfera1}, we get that 
	\[q_{t+1}*P(q_1,\ldots,q_n)=q_{t+1}\cdot
		P(q_{t+1}^{-1}q_1q_{t+1},\ldots, q_{t+1}^{-1}q_t
		q_{t+1},q_{t+1}, \ldots, q_n)\]
	 when $q_{t+1}\neq 0$.  Since
	$q_{t+1}*P$ vanishes at $(a_1,\ldots,a_n)$, we get that $P$
	vanishes both at $(a_1,\ldots,a_t,a_{t+1},\ldots, a_n)$ and at
	$(\tilde a_1,\ldots, \tilde a_t, a_{t+1},\ldots,a_n)$ where $\tilde
	a_\ell ={a_{t+1}}^{-1} a_\ell a_{t+1}$.  Applying Proposition
	\ref{zericinesi} to $\hat
	P(q_1,\ldots,q_t)=P(q_1,\ldots,q_t,a_{t+1},\ldots,a_n)$ we get
	that $\hat P$ is zero on the entire $\mathbb S_{(a_1,\ldots,
		a_t)}$.  Thus $P \in E_{\mathbb S_{(a_1,\ldots, a_t)}\times
		\{a_{t+1}\}\times \cdots \times\{ a_n\}}$.
\end{proof}

}
{ 
	\noindent To proceed in the construction of a larger zero set, we need the following step.}
\begin{lemma}\label{pippo}
		Let $I$ be a right ideal in $E_{\mathbb S_{(a_1,\cdots,a_k)}\times \{a_{k+1}\} \times \cdots\times \{ a_{n}\}}$, 
where $\mathbb S_{(a_1,\cdots,a_k)}$ is an arranged spherical set,
and let $t\in\{k+1,\ldots,n\}$ be such that $a_{l} a_m=a_ma_{l}$ for all $ k+1\leq  l,m\leq t$ and $a_{t}a_{t+1}\neq a_{t+1}a_t$. 
Then, there exists $(\widetilde{a_1},\ldots, \widetilde{a_k})\in \mathbb S_{(a_1,\cdots,a_k)}$  such that 
$\widetilde{a_1},\ldots, \widetilde{a_k}, a_{k+1}, \ldots, a_t$ commute and 
		\[I\subset E_{\mathbb S_{( \widetilde{a_1},\ldots, \widetilde{a_k},{a_{k+1}},\ldots, {a_t})}\times \{ a_{t+1} \}\times \cdots \times\{ a_{n}\} }\cap  E_{\mathbb S_{( \overline{\widetilde{a_1}},\ldots, \overline{\widetilde{a_k}},{a_{k+1}},\ldots, {a_t})}\times \{ a_{t+1} \}\times \cdots \times\{ a_{n}\} }. \]  

\end{lemma}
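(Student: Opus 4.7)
The plan is to produce, for each $P \in I$, two distinct zeros on a common spherical set of the desired form, and then invoke Proposition~\ref{zericinesi} to upgrade these into vanishing on the whole spherical set.

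First I would locate a common slice. Since $a_{k+1},\ldots,a_t$ pairwise commute, there exists $J_1\in\mathbb{S}$ with $a_{k+1},\ldots,a_t \in \mathbb{C}_{J_1}$. Since $\mathbb{S}_{(a_1,\ldots,a_k)}$ is arranged, there is $J_0\in\mathbb{S}$ with $a_1,\ldots,a_k\in\mathbb{C}_{J_0}$. Choose $g\in\mathbb{H}\setminus\{0\}$ with $g^{-1}J_0 g = J_1$ and set $\widetilde{a_\ell}:=g^{-1}a_\ell g\in\mathbb{C}_{J_1}$ for $1\le \ell\le k$. Then $(\widetilde{a_1},\ldots,\widetilde{a_k})\in\mathbb{S}_{(a_1,\ldots,a_k)}$, and because all of $\widetilde{a_1},\ldots,\widetilde{a_k},a_{k+1},\ldots,a_t$ lie in the single slice $\mathbb{C}_{J_1}$, they pairwise commute. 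Replacing $g$ by $gI$ for any $I\in\mathbb{S}$ orthogonal to $J_1$ replaces each $\widetilde{a_\ell}$ by $\overline{\widetilde{a_\ell}}$, showing that $(\overline{\widetilde{a_1}},\ldots,\overline{\widetilde{a_k}})$ also belongs to $\mathbb{S}_{(a_1,\ldots,a_k)}$.

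Next, fix $P\in I$. Since both $(\widetilde{a_1},\ldots,\widetilde{a_k},a_{k+1},\ldots,a_n)$ and $(\overline{\widetilde{a_1}},\ldots,\overline{\widetilde{a_k}},a_{k+1},\ldots,a_n)$ lie in the balloon $\mathbb{S}_{(a_1,\ldots,a_k)}\times\{a_{k+1}\}\times\cdots\times\{a_n\}$, $P$ vanishes at both. Because $I$ is a right ideal, $P*q_{t+1}=q_{t+1}*P\in I$; the direct computation used in the proof of Proposition~\ref{caso1} then gives, for $q_{t+1}\neq 0$,
\[(q_{t+1}*P)(q_1,\ldots,q_n) = q_{t+1}\cdot P(q_{t+1}^{-1}q_1 q_{t+1},\ldots,q_{t+1}^{-1}q_t q_{t+1}, q_{t+1}, q_{t+2},\ldots,q_n).\]
Evaluating at the two points above (and noting that $a_{t+1}\neq 0$, since $a_t a_{t+1}\neq a_{t+1}a_t$) yields
\[P(a_{t+1}^{-1}\widetilde{a_1}a_{t+1},\ldots,a_{t+1}^{-1}a_t a_{t+1}, a_{t+1},\ldots,a_n) = 0,\]
and the analogous identity with $\overline{\widetilde{a_\ell}}$ in place of $\widetilde{a_\ell}$. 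Setting $\hat{P}(q_1,\ldots,q_t):=P(q_1,\ldots,q_t,a_{t+1},\ldots,a_n)$, this exhibits two points of the spherical set $\mathbb{S}_{(\widetilde{a_1},\ldots,\widetilde{a_k},a_{k+1},\ldots,a_t)}$, namely $(\widetilde{a_1},\ldots,\widetilde{a_k},a_{k+1},\ldots,a_t)$ and its simultaneous $a_{t+1}$-conjugate, at which $\hat{P}$ vanishes. These are distinct precisely because $a_t a_{t+1}\neq a_{t+1}a_t$, and the base point lies in $\mathbb{C}_{J_1}^{t}$ by construction; Proposition~\ref{zericinesi} then forces $\hat{P}$ to vanish on the entire spherical set, giving $P\in E_{\mathbb{S}_{(\widetilde{a_1},\ldots,\widetilde{a_k},a_{k+1},\ldots,a_t)}\times\{a_{t+1}\}\times\cdots\times\{a_n\}}$. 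Running the same argument starting from $(\overline{\widetilde{a_1}},\ldots,\overline{\widetilde{a_k}},a_{k+1},\ldots,a_n)$ delivers the second factor of the intersection.

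The main obstacle is setting up the common slice $\mathbb{C}_{J_1}$ and verifying that both $(\widetilde{a_1},\ldots,\widetilde{a_k})$ and $(\overline{\widetilde{a_1}},\ldots,\overline{\widetilde{a_k}})$ actually lie in $\mathbb{S}_{(a_1,\ldots,a_k)}$; once this is in place, the noncommutation hypothesis $a_t a_{t+1}\neq a_{t+1}a_t$ furnishes exactly the distinctness needed to invoke Proposition~\ref{zericinesi}, and the commutation of $\widetilde{a_1},\ldots,\widetilde{a_k},a_{k+1},\ldots,a_t$ follows automatically from their common membership in one slice.
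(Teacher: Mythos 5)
Your proof is correct and follows essentially the same route as the paper's: it identifies the two conjugate points of $\mathbb{S}_{(a_1,\ldots,a_k)}$ whose components commute with $a_{k+1},\ldots,a_t$, observes that $I$ vanishes there because they lie in the given balloon, and then sweeps out the two arranged spherical sets via the $q_{t+1}$-multiplication trick combined with Proposition \ref{zericinesi}. The only difference is that you inline the argument of Lemma \ref{sfere} (which the paper simply cites at this point) and you spell out the construction of $(\widetilde{a_1},\ldots,\widetilde{a_k})$ and $(\overline{\widetilde{a_1}},\ldots,\overline{\widetilde{a_k}})$, which the paper asserts without detail.
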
 
\begin{proof}
	Let $I \subset E_{\mathbb S_{(a_1,\cdots,a_k)}\times \{a_{k+1}\} \times \cdots\times \{ a_{n}\}}$. {Then there exist exactly two points  
	$(\widetilde{a_1},\ldots, \widetilde{a_k})$ and $(\overline{\widetilde{a_1}},\ldots, \overline{\widetilde{a_k}})\in \mathbb S_{(a_1,\cdots,a_k)}$ 
whose components commute with 
		$a_{k+1}, \ldots, a_t$ so that
	 $$I\subset E_{( \widetilde{a_1},\ldots, \widetilde{a_k}, {a_{k+1}},\ldots, {a_t}, a_{t+1},\ldots, a_n)}\cap E_{( \overline{\widetilde{a_1}},\ldots, \overline{\widetilde{a_k}}, {a_{k+1}},\ldots, {a_t}, a_{t+1},\ldots, a_n)}.$$}
	Since $a_{t+1}a_t\neq a_ta_{t+1}$, from Lemma \ref{sfere} we can then conclude that 
			\[I\subset E_{\mathbb S_{( \widetilde{a_1},\ldots, \widetilde{a_k}, {a_{k+1}},\ldots, {a_t})}\times \{ a_{t+1} \}\times \cdots \times\{ a_{n}\} } \cap  E_{\mathbb S_{( \overline{\widetilde{a_1}},\ldots, \overline{\widetilde{a_k}},{a_{k+1}},\ldots, {a_t})}\times \{ a_{t+1} \}\times \cdots \times\{ a_{n}\} }.\]  

\end{proof}
	\noindent Iterating the procedure described in the proof of Lemma \ref{pippo}, we get the following result, which, with a suitable change of notations, has been proved by Alon and Paran and used as the main tool to prove Theorem 1.1 in \cite{israeliani-1}. 
\begin{proposition}\label{caso2}
Assume  $I$ is a right ideal in $E_{\mathbb S_{(a_1,\cdots,a_k)}\times \{a_{k+1}\} \times \cdots\times \{  a_{n}\}}$, 
with $\mathbb S_{(a_1,\cdots,a_k)}$  an arranged spherical set.
Let $1<t_1=k<t_2\ldots<t_{p}<n$ be such that $$a_1,\ldots,a_{t_1}\in \mathbb{C}_{J_1}, \quad 
a_{t_1+1},\ldots,a_{t_2}\in \mathbb{C}_{J_2},\quad ..., \quad a_{t_p+1},\ldots,a_{n}\in \mathbb{C}_{J_{p+1}},$$
where $J_\ell \in \mathbb S$, for all $\ell=1,\ldots,p+1$ and $J_\ell\cdot J_{\ell+1}\neq J_{\ell+1}\cdot J_{\ell}$ for all $\ell=1,\ldots,p.$
%
Then 
there exist  $2^{p-1}$ distinct arranged spheres   
 $\mathbb S_{ (\widetilde{a_1}, 
 \ldots, \widetilde{a_{t_p}})}$, 
 such that 
\[I\subset E_{\mathbb S_{(\widetilde{a_1},
\ldots, \widetilde{a_{t_p}})}\times \{a_{{t_p}+1} \}\times \cdots\times\{  a_{n}\}},\]
 with $\widetilde{a_l}\in\mathbb{S}_{a_l}$ for any $ l=1,\ldots, t_p$. 
 
%
%
 
\end{proposition}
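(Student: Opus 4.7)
\emph{Proof plan for Proposition \ref{caso2}.} The natural approach is finite induction on $p$, with Lemma \ref{pippo} as the single-step doubling device. The decomposition of the coordinates into $p+1$ slice-blocks, together with the hypothesis $J_\ell J_{\ell+1}\neq J_{\ell+1}J_\ell$, guarantees that at each block boundary $t_\ell$ one has $a_{t_\ell}a_{t_\ell+1}\neq a_{t_\ell+1}a_{t_\ell}$ (as $a_{t_\ell}\in\mathbb{C}_{J_\ell}$ and $a_{t_\ell+1}\in\mathbb{C}_{J_{\ell+1}}$ with both entries non-real), which is precisely the non-commutativity needed to invoke Lemma \ref{pippo}.

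The base case $p=1$ is immediate: the hypothesis already gives $I\subset E_{\mathbb{S}_{(a_1,\ldots,a_{t_1})}\times\{a_{t_1+1}\}\times\cdots\times\{a_n\}}$, which is the desired conclusion with the single ($2^{p-1}=1$) arranged sphere. For the inductive step, suppose that after $i-1$ iterations we have produced $2^{i-1}$ arranged spheres, indexed by $j=1,\ldots,2^{i-1}$, such that
$$I\subset \bigcap_{j=1}^{2^{i-1}} E_{\mathbb{S}_{(\widetilde{a}^{(j)}_1,\ldots,\widetilde{a}^{(j)}_{t_i})}\times\{a_{t_i+1}\}\times\cdots\times\{a_n\}},$$
with each $(\widetilde{a}^{(j)}_1,\ldots,\widetilde{a}^{(j)}_{t_i})$ having pairwise commuting entries and $\widetilde{a}^{(j)}_l\in\mathbb{S}_{a_l}$. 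Fixing $j$ and applying Lemma \ref{pippo} with $k=t_i$ and $t=t_{i+1}$ splits the $j$-th containment into the intersection of two new containments at level $t_{i+1}$: the entries $a_{t_i+1},\ldots,a_{t_{i+1}}$ commute pairwise (they share the slice $\mathbb{C}_{J_{i+1}}$), and the required non-commutativity at the boundary $t_{i+1}$ was noted above. Doing this for every $j$ doubles the total count, producing $2^i$ arranged spheres. Iterating from $i=1$ up to $i=p-1$ yields the desired $2^{p-1}$ arranged spheres at level $t_p$, giving the claim.

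The main point to verify at each step is that the enlarged tuples produced by Lemma \ref{pippo} are again arranged, i.e.\ that all $t_{i+1}$ entries commute pairwise. This rests on three ingredients: the inductive hypothesis (the first $t_i$ entries commute pairwise), the conclusion of Lemma \ref{pippo} (the selected rotates commute with each of $a_{t_i+1},\ldots,a_{t_{i+1}}$), and the slice condition (the new entries all lie in $\mathbb{C}_{J_{i+1}}$ and hence commute among themselves). Distinctness of the $2^{p-1}$ final spheres is the subtler bookkeeping step: the pair of tuples produced by each invocation of Lemma \ref{pippo} are conjugate (differing in the newly constrained entries by reflection through $\mathbb{R}$ inside $\mathbb{C}_{J_{i+1}}$), and any $g\in\mathbb{H}^\ast$ conjugating one into the other would have to lie in $\bigcap_{l=t_i+1}^{t_{i+1}}C_{a_l}=\mathbb{C}_{J_{i+1}}$, forcing it to commute with the very entries it is supposed to conjugate non-trivially, a contradiction. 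Thus different sequences of conjugation choices yield genuinely distinct arranged spheres, completing the count.
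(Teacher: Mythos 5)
Your proof takes essentially the same route as the paper's, whose entire argument is the one line ``proceed as in the proof of Lemma \ref{pippo} for $p-1$ steps: at each step we can choose two different arranged spheres''; your finite induction on $p$ with Lemma \ref{pippo} as the doubling device is exactly that argument, spelled out. The additional details you supply (non-commutativity at the block boundaries, arrangedness of the enlarged tuples, and distinctness of the resulting spheres via the commutant argument) are correct and go beyond what the paper records, but do not constitute a different approach.
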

\begin{proof}
We can proceed as in the proof of Lemma \ref{pippo} for $p-1$ steps: at each step we can choose two different arranged spheres. 
\end{proof}

	\noindent The results on common zeros of slice regular polynomials in a right ideal of $\mathbb H[q_1,\ldots,q_n]$ obtained in the previous section, lead to the following definition.
\begin{definition}\label{BS}
Let $U$ be a subset of $\mathbb H^n$. We will say that $U$ is \emph{balloon symmetric} if whenever $(a_1,\ldots,a_n)\in U$, with 
$t=\min\{ k : \ a_ra_s=a_sa_r \text{ for any } \ r,s\in\{k+1,\ldots,n \}\}$ greater than 1, then, for at least one  arranged spherical set 
$\mathbb S_{(\widetilde{a_1},\cdots, \widetilde{a_t })}$ with $\widetilde{a_l}\in\mathbb{S}_{a_l}, \  l=1,\ldots, t$, the 
\emph{balloon} $\mathbb{S}_{(\widetilde{ a_1},\ldots,\widetilde{ a_t})}\times\{a_{t+1}\}\times\cdots\times \{a_n\}$ is contained in $U$.


\end{definition}
{\noindent Notice that if all points in $U\subset \mathbb H^n$ have commuting components, then $U$ is trivially balloon symmetric.}

\noindent In the two variable case, the balloon symmetry coincides with the notion of {\em $q_1$-symmetry} introduced in \cite[Definition 4.11]{Nul1}.

\begin{remark}\label{balloon}
Proposition \ref{caso2} yields that if $I$ is a right ideal in $\mathbb H[q_1,\ldots,q_n]$, then $\mathcal V(I)$ is balloon symmetric.
\end{remark}

\section{Strong Version of Hilbert Nullstellensatz  and Slice Algebraic Sets in several quaternionic variables}

\noindent The geometric properties of  vanishing sets of right ideals of slice regular polynomials are crucial to prove the following result, which is the first step to generalize the strong version of the Hilbert Nullstellensatz  
 in $n>2$  variables.

\begin{theorem}\label{Siannullano}
Let $I$ be a right ideal in $\mathbb H[q_1,\ldots,q_n]$, then the right ideal $\mathcal J(\mathcal V_c(I))$, generated by slice regular polynomials vanishing on $\mathcal V_c(I)$, is contained in the set of slice regular polynomials vanishing on $\mathcal V(I)$. 
	\end{theorem}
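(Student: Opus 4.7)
The plan is to show that every $P\in\mathcal J(\mathcal V_c(I))$ vanishes at every point $\mathbf a\in\mathcal V(I)$. Writing $P=\sum_k R_k*Q_k$ with $R_k|_{\mathcal V_c(I)}\equiv 0$ and invoking Proposition \ref{prodstar} at any $\mathbf b\in\mathcal V_c(I)$ (whose coordinates lie on a common slice and hence commute), one obtains $(R_k*Q_k)(\mathbf b)=R_k(\mathbf b)\cdot Q_k(\cdots)=0$, so $P\equiv 0$ on $\mathcal V_c(I)$. Thus it suffices to prove that any slice regular polynomial vanishing on $\mathcal V_c(I)$ also vanishes on all of $\mathcal V(I)$.

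Fix such a $P$ and $\mathbf a=(a_1,\dots,a_n)\in\mathcal V(I)$. Partition the indices into $N$ maximal consecutive blocks such that the $a_i$'s in each block lie on a common slice $\mathbb C_{J_i}$ and consecutive blocks live on different slices. I propose to induct on $N$. When $N=1$, $\mathbf a\in\mathcal V_c(I)$ and $P(\mathbf a)=0$ is immediate. For $N\ge 2$, let $[a_1,\dots,a_{t_1}]$ be the first block (on $\mathbb C_{J_1}$) and $[a_{t_1+1},\dots,a_{t_2}]$ the second (on $\mathbb C_{J_2}$). Lemma \ref{sfere} gives the balloon $\mathbb S_{(a_1,\dots,a_{t_1})}\times\{a_{t_1+1}\}\times\cdots\times\{a_n\}\subseteq\mathcal V(I)$, and the opening step of the proof of Lemma \ref{pippo} exhibits two points $\mathbf a^{(+)}=(\tilde a_1,\dots,\tilde a_{t_1},a_{t_1+1},\dots,a_n)$ and $\mathbf a^{(-)}=(\overline{\tilde a_1},\dots,\overline{\tilde a_{t_1}},a_{t_1+1},\dots,a_n)$ in $\mathcal V(I)$, where $\tilde a_l={\rm Re}(a_l)+|{\rm Im}(a_l)|J_2$ is the rotation of $a_l$ onto $\mathbb C_{J_2}$. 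Each $\mathbf a^{(\pm)}$ has one fewer commuting block (blocks 1 and 2 merge into a single block on $J_2$), so the induction hypothesis yields $P(\mathbf a^{(+)})=P(\mathbf a^{(-)})=0$.

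To close the induction, consider the cross-section $\hat P(q_1,\dots,q_{t_1}):=P(q_1,\dots,q_{t_1},a_{t_1+1},\dots,a_n)$; it is a slice regular polynomial in $t_1$ quaternionic variables that vanishes at the two distinct conjugate points $(\tilde a_1,\dots,\tilde a_{t_1})$ and $(\overline{\tilde a_1},\dots,\overline{\tilde a_{t_1}})$, both lying on slice $\mathbb C_{J_2}$. Proposition \ref{zericinesi} then forces $\hat P\equiv 0$ on the arranged spherical set $\mathbb S_{(\tilde a_1,\dots,\tilde a_{t_1})}=\mathbb S_{(a_1,\dots,a_{t_1})}$; and since $(a_1,\dots,a_{t_1})=(g^{-1}\tilde a_1 g,\dots,g^{-1}\tilde a_{t_1}g)$ for any $g\in\mathbb H\setminus\{0\}$ satisfying $g^{-1}J_2 g=J_1$, this yields $P(\mathbf a)=\hat P(a_1,\dots,a_{t_1})=0$. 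The main obstacle is identifying the correct induction invariant (the block count $N$) and the matching merge operation: once one recognizes that the ``two commuting rotations'' construction inside the proof of Lemma \ref{pippo} strictly reduces $N$ while simultaneously producing two conjugate zeros of $\hat P$ on an arranged spherical set containing $\mathbf a$, Proposition \ref{zericinesi} closes the argument.
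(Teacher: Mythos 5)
Your argument is correct in substance, but it runs on a genuinely different induction than the paper's. The paper inducts on the number of variables: it isolates the maximal commuting \emph{suffix} $(a_{t+1},\ldots,a_n)$ of the point, forms the restricted right ideal $\hat I=\{P(q_1,\ldots,q_t,a_{t+1},\ldots,a_n):P\in I\}$ in $t<n$ variables, shows (via Lemma \ref{sfere} and Proposition \ref{zericinesi}) that $\hat P_0$ vanishes on $\mathcal V_c(\hat I)$, and then invokes the inductive hypothesis \emph{for the ideal $\hat I$} to conclude that $\hat P_0$ vanishes at $(a_1,\ldots,a_t)\in\mathcal V(\hat I)$. You instead fix the ideal once and for all and induct on the number $N$ of maximal consecutive slice-coherent blocks of the point, merging the first two blocks at each step; the engine is the same pair of tools (the conjugation trick of Lemma \ref{sfere} to produce a balloon in $\mathcal V(I)$, then Proposition \ref{zericinesi} applied to the cross-section $\hat P$), but you never need to construct a new ideal, verify that it is a right ideal, or track membership in $\mathcal J(\mathcal V_c(\hat I))$. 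Your route is arguably more economical; the paper's restriction-ideal formulation is what lets them quote the $n$-variable statement verbatim as the inductive hypothesis.

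One point needs patching. You invoke Lemma \ref{sfere} for the point $\mathbf a$ with $t=t_1$, but its stated hypothesis is $a_{t_1}a_{t_1+1}\neq a_{t_1+1}a_{t_1}$, and maximality of the first block does not guarantee this: if $a_{t_1}$ is real (or accidentally commutes with $a_{t_1+1}$), the hypothesis fails even though the blocks lie on genuinely different slices --- e.g.\ $\mathbf a=(i,1,j)$, where $a_2=1$ commutes with $a_3=j$. The fix is immediate: maximality (together with $N\ge 2$) forces the first block to contain a non-real element $a_\ell\in\mathbb C_{J_1}$ with $a_\ell a_{t_1+1}\neq a_{t_1+1}a_\ell$, and the proof of Lemma \ref{sfere} (conjugation of the first $t_1$ variables by $q_{t_1+1}$) only needs \emph{some} component of the block to be moved, so its conclusion still holds. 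You should either state and use this slightly weakened form of the lemma or handle the real components explicitly. A second, purely notational slip: $\tilde a_l$ should be $x_l+y_lJ_2$ where $a_l=x_l+y_lJ_1$ with $y_l$ of either sign, rather than ${\rm Re}(a_l)+|{\rm Im}(a_l)|J_2$, so that a single conjugation carries the whole block back onto $\mathbb C_{J_1}$; this does not affect the argument, since either choice gives one of the two conjugate points of $\mathbb S_{(a_1,\ldots,a_{t_1})}\cap\mathbb C_{J_2}^{t_1}$ that you feed into Proposition \ref{zericinesi}.
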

\begin{proof}
Let us prove the inclusion by induction on the number of variables.
For $n=1$, recalling equality \eqref{jvci} there is nothing to prove, since $\mathcal V (I)=\mathcal V_c(I)$ for any right ideal $I$ in $\mathbb H[q]$.

\noindent 
Suppose now that the inclusion holds for any right ideal in $\mathbb H[q_1,\ldots,q_{r}]$ with $r<n$.
\noindent Let $I$ be a right ideal in $\mathbb H[q_1,\ldots,q_{n}]$, let $(a_1,\ldots,a_n) \in \mathcal V(I)\setminus	\mathcal V_c(I)$ and
consider $P_0\in \mathcal J(\mathcal V_c(I))$. 
Let $t=\min\{ k : a_\ell a_{m}=a_{m}a_\ell \text{ for any } \ell,m > k\}$. Notice that $t\ge 1$ { since $(a_1,\ldots,a_n)\notin \mathcal V_c(I)$ and $t\le n-1$ since $a_n$ commutes with itself.}


\noindent Let us define $\hat I$ as
\[\hat I = \{\hat P \in \mathbb{H}[q_1,\ldots,q_t] \ : \ \hat P(q_1,\ldots,q_t)=P(q_1,\ldots,q_t,a_{t+1},\ldots,a_n) \ \text{ with } \ P\in I \}.\]
The set $\hat I$ is a right ideal in $\mathbb{H}[q_1,\ldots,q_t]$. In fact if $\hat P_1, \hat P_2 \in \hat I$, then $\hat P_1 + \hat P_2= \widehat{P_1+P_2}$ and hence it belongs to $\hat I$. Consider now $Q \in \mathbb{H}[q_1,\ldots,q_t]\subset \mathbb H[q_1,\ldots,q_n]$ and $\hat P \in \hat I$. Then
$\hat P * Q = \widehat{P*Q}$ since $Q$ does not depend on $q_{t+1}, \ldots,q_n$, therefore it belongs to $\hat I$. \\
\noindent We want to prove that $\hat P_0 \in \mathcal J(\mathcal V_c(\hat I)).$\\ Let $(b_1,\ldots,b_t)\in  \mathcal V_c(\hat I).$
Observe that $(b_1,\ldots,b_t,a_{t+1},\ldots,a_n)\in \mathcal V(I),$ indeed, for any $P\in I,$ we have $$P(b_1,\ldots,b_t,a_{t+1},\ldots,a_n)=\hat{P}(b_1,\ldots,b_t)=0,$$ since $\hat{P}\in\hat{I}$. Now there are two possibilities:
\noindent
 if $(b_1,\ldots,b_t,a_{t+1},\ldots,a_n)\in \mathcal V_c(I)$ we have that $$\hat{P_0}(b_1,\ldots,b_t)=P_0(b_1,\ldots,b_t,a_{t+1},\ldots,a_n)=0.$$ 
\noindent   If,  otherwise, $(b_1,\ldots,b_t,a_{t+1},\ldots,a_n)\in \mathcal V(I)\setminus \mathcal V_c(I)$, from Lemma \ref{sfere}, we have that the balloon $$\mathbb S_{( b_1,\ldots,b_t)}\times \{a_{t+1}\}\times \cdots \times \{a_n\} \subseteq \mathcal{V}(I).$$
Therefore there exist two points $$(\widetilde{b_1},\ldots,\widetilde{b_t},a_{t+1},\ldots,a_n)\quad\mbox{and}\quad(\overline{\widetilde{ b_1}},\ldots,\overline{\widetilde{ b_t}},a_{t+1},\ldots,a_n)$$ in $\mathbb S_{( b_1,\ldots,b_t)}\times \{a_{t+1}\}\times \cdots \times \{a_n\}\cap \mathcal{V}_c(I)$ on which $P_0$ vanishes. 
Thus, thanks to Proposition \ref{zericinesi} for $\hat P_0,$ we get that $\hat{P}_0$ vanishes on the entire $\mathbb S_{({\widetilde{ b_1}},\ldots,{\widetilde{ b_t}})}$, and thus on $( b_1,\ldots,b_t)$. Therefore  we have that  $\hat P_0$ vanishes on $\mathcal V_c(\hat I)$ i.e.  $\hat P_0\in\mathcal{J}(\mathcal{V}_c(\hat I)).$ By induction, we have that $\hat P_0\in\mathcal{J}(\mathcal{V}(\hat I)).$\\ Now observe that $( a_1,\ldots,a_t)\in \mathcal V(\hat I).$ Indeed  for any $\hat Q\in\hat I$ we have $$\hat Q(a_1,\ldots,a_t)=Q(a_1,\ldots,a_n)$$ for $Q\in I$. Since $(a_1,\ldots,a_n)\in\mathcal{V}(I)$ we have  $\hat Q(a_1,\ldots,a_t)=0.$ Thus, since  $\hat{P}_0$ vanishes on $\mathcal V(\hat I)$ we get
$$P_0(a_1, \ldots, a_n)=\hat{P}_0(a_1,\ldots,a_t)=0.$$
Since $(a_1,\ldots,a_n)$ was chosen arbitrarily in $\mathcal V(I)\setminus \mathcal V_c(I)$,
we conclude that $P_0$ vanishes on the entire $\mathcal V(I)$, i.e.  $P_0\in\mathcal{J}(\mathcal V(I))$.
\end{proof}
\noindent The previous result was independently proven in \cite[Theorem 1.1]{israeliani-3}. 
\noindent Since  $\mathcal J(\mathcal V(I))\subseteq\mathcal J(\mathcal V_c(I))$ for any right ideal $I$ 
and Theorem \ref{Siannullano} yields the other inclusion,
we can extend Theorem 4.15 in \cite{Nul1}
to right ideals of slice regular polynomials in $n>2$ quaternionic variables.


\begin{corollary}\label{idealiuguali}
	Let $I\in \mathbb{H}[q_1,\ldots,q_n]$ be a right ideal. Then $\mathcal J(\mathcal V(I))=\mathcal J(\mathcal V_c(I))$.
\end{corollary}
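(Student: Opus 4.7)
The plan is to prove the equality by establishing the two inclusions separately; both reduce to short formal deductions once Theorem \ref{Siannullano} and the identity \eqref{jvci} are in hand.

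For the inclusion $\mathcal J(\mathcal V(I)) \subseteq \mathcal J(\mathcal V_c(I))$, I would argue directly from the definitions. Since $\mathcal V_c(I) = \mathcal V(I)\cap \bigcup_{J\in\mathbb S}(\mathbb C_J)^n$, we have the trivial containment $\mathcal V_c(I)\subseteq \mathcal V(I)$, so every slice regular polynomial that vanishes on $\mathcal V(I)$ vanishes also on $\mathcal V_c(I)$. By \eqref{jvci}, the set $\mathcal J(\mathcal V_c(I))$ coincides with the set of all slice regular polynomials vanishing on $\mathcal V_c(I)$. In particular, every generator of $\mathcal J(\mathcal V(I))$ (i.e. every $P\in \mathbb H[q_1,\ldots,q_n]$ with $P|_{\mathcal V(I)}\equiv 0$) lies in $\mathcal J(\mathcal V_c(I))$. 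Since $\mathcal J(\mathcal V_c(I))$ is itself a right ideal, it must contain the right ideal generated by those polynomials, giving the first inclusion.

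For the reverse inclusion $\mathcal J(\mathcal V_c(I)) \subseteq \mathcal J(\mathcal V(I))$, I would invoke Theorem \ref{Siannullano}, which states precisely that every element of $\mathcal J(\mathcal V_c(I))$ is a slice regular polynomial vanishing on all of $\mathcal V(I)$. Each such polynomial is, by definition, one of the generators of the right ideal $\mathcal J(\mathcal V(I))$ and therefore lies in it. Hence $\mathcal J(\mathcal V_c(I))\subseteq \mathcal J(\mathcal V(I))$. Combining the two inclusions yields the desired equality.

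The genuine work of this corollary has already been done in Theorem \ref{Siannullano}: the main obstacle is the non-trivial direction, showing that a polynomial vanishing merely on the ``complex slice'' part $\mathcal V_c(I)$ must also vanish on the potentially larger set $\mathcal V(I)$, which contains points with non-commuting components. That is where the balloon-symmetry information (Remark \ref{balloon}) and the representation-type argument via Proposition \ref{zericinesi} enter. Once Theorem \ref{Siannullano} is available, the corollary itself is a purely formal bookkeeping step, and no further technical ingredient is required.
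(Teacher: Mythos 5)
Your proof is correct and follows essentially the same route as the paper: the inclusion $\mathcal J(\mathcal V(I))\subseteq\mathcal J(\mathcal V_c(I))$ is the formal consequence of $\mathcal V_c(I)\subseteq\mathcal V(I)$, and the reverse inclusion is exactly what Theorem \ref{Siannullano} provides, since polynomials vanishing on $\mathcal V(I)$ are among the generators of $\mathcal J(\mathcal V(I))$. You also correctly identify that all the substance lives in Theorem \ref{Siannullano}, the corollary being pure bookkeeping.
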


\noindent As a consequence of Corollary \ref{idealiuguali} we get the following strong version of the Hilbert Nullstellensatz  for right ideals of slice  regular polynomials in $n$ quaternionic variables.
\begin{theorem}\label{StrongNull}
Let $I$ be a right ideal in $\mathbb{H}[q_1,\ldots,q_n]$. Then
\[\mathcal{J}(\mathcal{V}(I))=\sqrt{I}.\] 
\end{theorem}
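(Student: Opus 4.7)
The plan is to prove the theorem as a double inclusion, using as the key reduction the identification
\[
\mathcal{J}(\mathcal{V}(I))=\mathcal{J}(\mathcal{V}_c(I))=E_{\mathcal{V}_c(I)}=\bigcap_{\mathbf{a}\in\mathcal{V}_c(I)}E_{\mathbf{a}},
\]
which combines Corollary \ref{idealiuguali} with formula \eqref{jvci}. Thus $\mathcal{J}(\mathcal{V}(I))$ is precisely the intersection of the evaluation ideals $E_{\mathbf{a}}$ at all points $\mathbf{a}\in\mathcal{V}_c(I)$.

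For the inclusion $\sqrt{I}\subseteq\mathcal{J}(\mathcal{V}(I))$ I would prove as a preliminary lemma that for every $\mathbf{a}=(a_1,\ldots,a_n)\in\mathcal{V}_c(I)$ the evaluation ideal $E_{\mathbf{a}}$ is completely prime. Since $\mathbf{a}$ lies in a single slice $\mathbb{C}_J^n$ its coordinates pairwise commute, so Proposition \ref{prodstar} applies. Assume $P*Q\in E_{\mathbf{a}}$, $P*E_{\mathbf{a}}\subseteq E_{\mathbf{a}}$, and $P(\mathbf{a})\neq 0$; testing the hypothesis $P*E_{\mathbf{a}}\subseteq E_{\mathbf{a}}$ on the elementary elements $q_j-a_j\in E_{\mathbf{a}}$ and applying Proposition \ref{prodstar} yields $P(\mathbf{a})^{-1}a_jP(\mathbf{a})=a_j$ for every $j$, so $P(\mathbf{a})$ commutes with each $a_j$; Proposition \ref{prodstar} applied to $(P*Q)(\mathbf{a})=0$ then gives $Q(\mathbf{a})=0$. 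Since $I\subseteq E_{\mathbf{a}}$ for all $\mathbf{a}\in\mathcal{V}_c(I)$, the definition of the radical yields $\sqrt{I}\subseteq\bigcap_{\mathbf{a}\in\mathcal{V}_c(I)}E_{\mathbf{a}}=\mathcal{J}(\mathcal{V}(I))$.

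For the reverse inclusion $\mathcal{J}(\mathcal{V}(I))\subseteq\sqrt{I}$ I would argue by contrapositive. Given $P\notin\sqrt{I}$, some completely prime right ideal $L\supseteq I$ avoids $P$. Applying Zorn's lemma to the family of right ideals $L'\supseteq L$ satisfying $P\notin L'$ and $P*L'\subseteq L'$ produces a maximal element $\widetilde{L}$, which a standard argument (tracking how complete primeness propagates through maximal members of such families) shows is itself completely prime. The geometric results of Section 3 are then used to identify $\widetilde{L}$: by Remark \ref{balloon}, $\mathcal{V}(\widetilde{L})$ is balloon-symmetric; Propositions \ref{caso1}, \ref{caso2} together with Corollary \ref{formacinese} then force $\widetilde{L}$ to coincide with an evaluation ideal $E_{\mathbf{a}}$ at some point $\mathbf{a}\in\mathcal{V}_c(I)$ at which $P(\mathbf{a})\neq 0$, contradicting $P\in\mathcal{J}(\mathcal{V}(I))=E_{\mathcal{V}_c(I)}$.

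The main obstacle I anticipate is the classification step in the last paragraph: showing that a completely prime right ideal maximal with respect to avoiding $P$ and being stable under $P$-left-multiplication must coincide with $E_{\mathbf{a}}$ for a single $\mathbf{a}\in\mathcal{V}_c(I)$. The non-commutativity of the $*$-product and the mixed two-sided flavor of complete primeness make the standard commutative prime-avoidance tricks unavailable, so the reduction must proceed through the geometric descent afforded by balloons and arranged spherical sets developed in Section 3, exploiting Lemma \ref{pippo} to collapse any residual spherical symmetry down to a single point of a slice.
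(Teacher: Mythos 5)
Your reduction via Corollary \ref{idealiuguali} and formula \eqref{jvci} to the identity $\mathcal{J}(\mathcal{V}(I))=\bigcap_{\mathbf{a}\in\mathcal{V}_c(I)}E_{\mathbf{a}}$ is the right move, and your proof of the inclusion $\sqrt{I}\subseteq\mathcal{J}(\mathcal{V}(I))$ is correct and complete: testing $P*E_{\mathbf{a}}\subseteq E_{\mathbf{a}}$ on the polynomials $q_j-a_j$ and applying Proposition \ref{prodstar} does show that $E_{\mathbf{a}}$ is completely prime whenever the components of $\mathbf{a}$ commute, and since $I\subseteq E_{\mathbf{a}}$ for every $\mathbf{a}\in\mathcal{V}_c(I)$ the inclusion follows from the definition of the radical. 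Be aware, though, that the paper does not reprove the equality $\mathcal{J}(\mathcal{V}_c(I))=\sqrt{I}$ at all: it imports it from Theorem 4.10 and equation (4.2) of \cite{Nul1}, so its entire proof of Theorem \ref{StrongNull} is that citation plus Corollary \ref{idealiuguali}.

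The genuine gap is in your reverse inclusion. First, the claim that a right ideal $\widetilde L$ maximal among those containing $L$, avoiding $P$ and satisfying $P*\widetilde L\subseteq\widetilde L$ is itself completely prime is not a ``standard argument'' here: the commutative prime-avoidance proof relies on the complement of a prime being multiplicatively closed, and the bespoke definition of complete primeness used in this paper (quantified over all pairs with the side condition $P*L\subseteq L$) does not obviously transfer to such maximal elements. Second, and more seriously, the assertion that Propositions \ref{caso1}, \ref{caso2}, Corollary \ref{formacinese} and Lemma \ref{pippo} ``force'' $\widetilde L$ to equal an evaluation ideal $E_{\mathbf{a}}$ with $\mathbf{a}\in\mathcal{V}_c(I)$ and $P(\mathbf{a})\neq 0$ is unsupported: those results describe how the common zero set of a \emph{given} right ideal is enlarged by balloons and arranged spherical sets; none of them classifies completely prime right ideals, and not every completely prime right ideal is an evaluation ideal (the zero ideal is completely prime, since the ring has no zero divisors). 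The step you flag as ``the main obstacle'' is in fact the entire content of the hard direction --- it subsumes the weak Nullstellensatz, since you must in particular rule out $\mathcal{V}_c(\widetilde L)=\varnothing$ --- and it is exactly what the cited Theorem 4.10 of \cite{Nul1} supplies. As written, your argument for $\mathcal{J}(\mathcal{V}(I))\subseteq\sqrt{I}$ is a plan, not a proof.
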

\begin{proof}
Theorem 4.10 in \cite{Nul1}, combined with equation $(4.2)$ in \cite{Nul1}, imply that 
$\mathcal{J}(\mathcal{V}_c(I))=\sqrt{I}.$
Corollary \ref{idealiuguali} immediately leads to the conclusion.
\end{proof}

\noindent We can now introduce the following important
\begin{definition}
	A subset $V\subseteq \mathbb H^n$ is called {\em slice algebraic} if for any $K\in \mathbb S$, $V\cap \mathbb C_K^n$ is a complex algebraic subset of $\mathbb C_K^n$.
\end{definition}

\begin{proposition}

If one defines slice algebraic subsets as {\em closed} subsets of  $\mathbb H^n$,
then one obtains a topology on $\mathbb H^n$ (which resembles the Zariski topology).

\end{proposition}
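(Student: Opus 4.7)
The plan is to verify directly the three axioms for a family of closed sets, reducing each axiom on $\mathbb{H}^n$ to the corresponding axiom for the ordinary Zariski topology on every slice $\mathbb{C}_K^n$, $K\in\mathbb{S}$. The definition of slice algebraic set is stated slicewise, so this reduction is natural: the whole statement will boil down to the fact that, for each fixed $K$, the complex algebraic subsets of $\mathbb{C}_K^n$ are themselves the closed sets of a topology (the Zariski topology), together with the elementary set-theoretic identities that describe how intersection with the fixed slice $\mathbb{C}_K^n$ interacts with unions and intersections.

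First I would handle the trivial cases by observing that $\emptyset\cap\mathbb{C}_K^n=\emptyset$ and $\mathbb{H}^n\cap\mathbb{C}_K^n=\mathbb{C}_K^n$ are complex algebraic subsets of $\mathbb{C}_K^n$ for every $K\in\mathbb{S}$, so both $\emptyset$ and $\mathbb{H}^n$ are slice algebraic. Next, given any family $\{V_\alpha\}_{\alpha\in A}$ of slice algebraic subsets of $\mathbb{H}^n$, the distributivity identity
\[\Big(\bigcap_{\alpha\in A} V_\alpha\Big)\cap \mathbb{C}_K^n \;=\; \bigcap_{\alpha\in A}\bigl(V_\alpha\cap \mathbb{C}_K^n\bigr)\]
expresses the intersection on the $K$-slice as an intersection of complex algebraic subsets of $\mathbb{C}_K^n$, which is again algebraic by the Zariski axioms on $\mathbb{C}_K^n$. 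Hence $\bigcap_\alpha V_\alpha$ is slice algebraic.

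For the stability under finite unions, given slice algebraic $V_1,\ldots,V_m\subseteq\mathbb{H}^n$, the analogous identity
\[(V_1\cup\cdots\cup V_m)\cap \mathbb{C}_K^n \;=\; \bigl(V_1\cap \mathbb{C}_K^n\bigr)\cup\cdots\cup\bigl(V_m\cap \mathbb{C}_K^n\bigr)\]
realizes the $K$-slice as a finite union of complex algebraic subsets of $\mathbb{C}_K^n$, which is again algebraic. This gives the remaining axiom, so declaring the slice algebraic sets to be the closed sets yields a topology on $\mathbb{H}^n$.

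There is no substantive obstacle here: the whole argument is purely formal once one notices that the definition of \emph{slice algebraic} was crafted slicewise, and that intersection with a fixed slice commutes with both arbitrary intersections and finite unions. The only point worth underlining is that the family is non-empty and interesting: by Theorem \ref{StrongNull} and Remark \ref{balloon}, zero sets $\mathcal{V}(I)$ of right ideals in $\mathbb{H}[q_1,\ldots,q_n]$ are slice algebraic (since $\mathcal{V}(I)\cap\mathbb{C}_K^n$ is cut out by the complex polynomials $F,G$ obtained slicewise via Lemma \ref{splitting}), so this Zariski-type topology genuinely extends the classical one on each slice.
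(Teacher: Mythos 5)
Your proof is correct and follows essentially the same route as the paper: both verify the closed-set axioms by intersecting with each slice $\mathbb{C}_K^n$ and invoking the corresponding Zariski axioms there, using that intersection with a fixed slice commutes with unions and intersections. The closing remark about $\mathcal{V}(I)$ is extra context not needed for the proposition itself.
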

\begin{proof}
Let us show that the family of slice algebraic sets satisfies the topology axioms.  
\begin{enumerate}
	\item The empty set is closed; indeed,  for any $K\in \mathbb S$,
          $\varnothing\cap \mathbb C_K^n=\varnothing$, which is
          algebraic in $\mathbb C_K^n$.
	\item $\mathbb H^n$ is closed; indeed, for any $K\in \mathbb S$,
          $\mathbb H^n\cap \mathbb C_K^n=\mathbb C_K^n$, which is
          algebraic in $\mathbb C_K^n$.
	\item If $V,W$ are slice algebraic sets, then for any $K\in \mathbb S$, 
	\[(V\cup W )\cap \mathbb C_K^n=(V\cap \mathbb C_K^n)\cup (W \cap \mathbb C_K^n),\]
	is the union of two algebraic sets in $\mathbb C_K^n$, thus $V\cup W$  is a slice algebraic set
        in $ \mathbb H^n$.
        
	\item If $\{V_\ell\}_\ell$ is a  family of slice algebraic
          sets, then, for any $K\in \mathbb
          S$, it turns out that
          \[(\bigcap_{\ell} V_\ell)\cap \mathbb
          C_K^n=\bigcap_{\ell}(V_\ell\cap \mathbb C_K^n),\] is the
          intersection of an infinite family of algebraic sets in
          $\mathbb C_K^n$, thus $\bigcap\limits_{\ell} V_\ell$
          is a slice algebraic set in $ \mathbb H^n$.
\end{enumerate}
\end{proof}
\noindent An important link between slice algebraic sets and vanishing sets of slice regular polynomials which belong to a right ideal   in $\mathbb H [q_1,\ldots,q_n]$ is the following.
\begin{theorem}
	Let $I$ be a right ideal in $\mathbb H [q_1,\ldots,q_n]$, then
        $\mathcal V(I)$ is a slice algebraic set in $ \mathbb H^n$.
\end{theorem}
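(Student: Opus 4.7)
The plan is to fix an arbitrary imaginary unit $K\in\mathbb{S}$ and show that $\mathcal{V}(I)\cap\mathbb{C}_K^n$ is a complex algebraic subset of $\mathbb{C}_K^n\cong\mathbb{C}^n$. The key tool will be the Splitting Lemma (Lemma \ref{splitting}): if $L\in\mathbb{S}$ is orthogonal to $K$, then every $P\in\mathbb{H}[q_1,\ldots,q_n]$ restricts on $\mathbb{C}_K^n$ to
\[ P(z_1,\ldots,z_n)=F_P(z_1,\ldots,z_n)+G_P(z_1,\ldots,z_n)\,L,\]
where $F_P,G_P\in\mathbb{C}_K[z_1,\ldots,z_n]$ are honest complex polynomials in $n$ variables.

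First, I would observe that since $\{1,L\}$ is $\mathbb{C}_K$-linearly independent in $\mathbb{H}$, a point $(z_1,\ldots,z_n)\in\mathbb{C}_K^n$ lies in $Z_P$ precisely when $F_P(z_1,\ldots,z_n)=0$ and $G_P(z_1,\ldots,z_n)=0$. Hence
\[ Z_P\cap\mathbb{C}_K^n=\{F_P=0\}\cap\{G_P=0\},\]
which is a complex algebraic subset of $\mathbb{C}_K^n$.

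Next, I would take the intersection over all $P\in I$:
\[ \mathcal{V}(I)\cap\mathbb{C}_K^n=\bigcap_{P\in I}\bigl(Z_P\cap\mathbb{C}_K^n\bigr)=V\!\bigl(\{F_P,G_P:P\in I\}\bigr),\]
i.e.\ the common zero locus in $\mathbb{C}_K^n$ of the (possibly infinite) family of complex polynomials $\{F_P,G_P\}_{P\in I}\subset \mathbb{C}_K[z_1,\ldots,z_n]$. Since $\mathbb{C}_K[z_1,\ldots,z_n]$ is Noetherian by the Hilbert basis theorem, the ideal generated by this family in $\mathbb{C}_K[z_1,\ldots,z_n]$ is finitely generated, so this common vanishing locus is cut out by finitely many complex polynomials and is therefore a genuine complex algebraic subset of $\mathbb{C}_K^n$.

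Since $K\in\mathbb{S}$ was arbitrary, $\mathcal{V}(I)$ meets every slice $\mathbb{C}_K^n$ in a complex algebraic set, which by definition means that $\mathcal{V}(I)$ is slice algebraic. There is no real obstacle here: the Splitting Lemma reduces the problem to the commutative polynomial setting, and Noetherianity handles the (a priori uncountable) intersection. The only point that deserves a moment of care is the well-definedness and $\mathbb{C}_K$-polynomiality of $F_P$ and $G_P$ for each $P\in I$, which is exactly the content of Lemma \ref{splitting}.
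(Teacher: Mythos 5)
Your proof is correct, and it takes a mildly but genuinely different route from the paper's. The paper first invokes Noetherianity of the non-commutative ring $\mathbb H[q_1,\ldots,q_n]$ to write $I=\langle P_1,\ldots,P_m\rangle$, and then applies the Splitting Lemma only to the finitely many generators; this forces it to identify $\mathcal V(I)\cap\mathbb C_K^n$ with the common zero set of $P_1,\ldots,P_m$ on the slice, a step that silently relies on Proposition \ref{prodstar} (points of $\mathbb C_K^n$ have commuting components, so $P_\ell*Q$ vanishes wherever $P_\ell$ does, hence the generators' slice zeros are already common zeros of all of $I$). You instead split \emph{every} element of $I$, observe that $Z_P\cap\mathbb C_K^n=\{F_P=0\}\cap\{G_P=0\}$, and push the finiteness problem into the commutative ring $\mathbb C_K[z_1,\ldots,z_n]$, where the Hilbert basis theorem (equivalently, the fact that arbitrary intersections of Zariski-closed sets are closed) finishes the argument. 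What your version buys is that it sidesteps both the unproved assertion that $\mathbb H[q_1,\ldots,q_n]$ is Noetherian and the generators-versus-ideal subtlety on the slice; what the paper's version buys is an explicit finite set of defining equations $F_1,G_1,\ldots,F_m,G_m$ for $\mathcal V(I)\cap\mathbb C_K^n$ coming from a single finite generating set of $I$. Both are sound.
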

\begin{proof}
Since $\mathbb H [q_1,\ldots,q_n]$ is Noetherian, there exist $P_1,\ldots,P_m\in \mathbb{H}[q_1,\ldots, q_n]$ such that $I=\langle P_1,\ldots,P_m\rangle$.
Let $K\in \mathbb S$ 
  and let $L\in \mathbb S$ be orthogonal to $K$ and consider the splitting \eqref{split} of the generators of $I$ on $\mathbb C_K^n$ with respect to $L$:
 $P_\ell({\bf{q}})=F_\ell({\bf{q}})+G_\ell({\bf{q}})L$, 
  with $F_\ell$ and $G_\ell$ complex polynomials in $\mathbb C_K^n$, for all $\ell=1,\ldots,m$. Hence  ${\bf{q}}
 \in \mathcal V(I)\cap (\mathbb{C}_K)^n$ if and only if $P_1({\bf{q}})=\ldots=P_m({\bf{q}})=0$, if and only if 
  $F_\ell({\bf{q}})=G_\ell({\bf{q}})=0$ for all $\ell=1,\ldots,m$. 
\end{proof}
\begin{remark}\label{ij}
	Not every slice algebraic set can be written as $\mathcal V (I)$ for some right ideal $I$ in $\mathbb H[q_1,\ldots,q_n]$.
	As an example consider $V=\{i,j\}\subset \mathbb H$. It is easy to see that it is slice algebraic, but if a polynomial in $I\subset \mathbb H[q]$ vanishes on $i$ and $j$, then it vanishes on the entire sphere $\mathbb S$. Thus $\mathcal V(I)$ should contain $\mathbb S$. 
\end{remark}
\noindent The operators $\mathcal V$ and $\mathcal J$ establish a relation between right ideals and slice algebraic sets. Such a relation becomes a bijection if we restrict to radical right ideals and to algebraic sets of the form $\mathcal V(I)$.

\begin{proposition}
The operators $\mathcal V$ and $\mathcal J$ are such that:
	
\begin{enumerate}
	
	\item for any radical right ideal $I$ in $\mathbb H[q_1,\ldots,q_n]$,  $\mathcal J (\mathcal V (I))=I$;
	\item for any 
right ideal $I$ in $\mathbb H[q_1,\ldots,q_n]$,  $\mathcal{V}(\mathcal J (\mathcal V (I)))=\mathcal{V}(I)$.
\end{enumerate}
\end{proposition}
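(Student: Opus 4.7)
The plan is to deduce both statements directly from results already established earlier in the paper, namely Theorem \ref{StrongNull}, Theorem \ref{Siannullano} and Corollary \ref{idealiuguali}.

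For part (1), the argument is immediate: a radical right ideal is by definition one satisfying $\sqrt{I}=I$, so Theorem \ref{StrongNull} gives $\mathcal J(\mathcal V(I))=\sqrt{I}=I$. Nothing more is required here.

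For part (2), I would prove the two inclusions separately. The inclusion $\mathcal V(\mathcal J(\mathcal V(I)))\subseteq\mathcal V(I)$ follows from the order-reversing property of $\mathcal V$ applied to $I\subseteq \mathcal J(\mathcal V(I))$. The latter containment holds because any $P\in I$ vanishes on $\mathcal V(I)$ by the very definition of $\mathcal V(I)$, and hence $P=P*1$ lies in the right ideal generated by polynomials vanishing on $\mathcal V(I)$. Then if $J_1\subseteq J_2$ are right ideals, every common zero of $J_2$ is in particular a common zero of $J_1$, giving $\mathcal V(J_2)\subseteq \mathcal V(J_1)$; applying this to $I\subseteq\mathcal J(\mathcal V(I))$ yields the desired inclusion.

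For the reverse inclusion $\mathcal V(I)\subseteq \mathcal V(\mathcal J(\mathcal V(I)))$, I would invoke Theorem \ref{Siannullano} combined with Corollary \ref{idealiuguali}. Since $\mathcal V_c(I)\subseteq \mathcal V(I)$, every slice regular polynomial vanishing on $\mathcal V(I)$ vanishes on $\mathcal V_c(I)$, so $\mathcal J(\mathcal V(I))\subseteq \mathcal J(\mathcal V_c(I))$. Theorem \ref{Siannullano} then tells us that every polynomial in $\mathcal J(\mathcal V_c(I))$ actually vanishes identically on $\mathcal V(I)$. Consequently, for any $x\in \mathcal V(I)$ and any $P\in \mathcal J(\mathcal V(I))$ we have $P(x)=0$, which shows $x\in \mathcal V(\mathcal J(\mathcal V(I)))$.

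The only conceptually delicate point is this last step, because the $*$-product does not evaluate multiplicatively at points with non-commuting components (see Proposition \ref{prodstar} and the discussion following it), so it is not \emph{a priori} evident that the generators' vanishing on $\mathcal V(I)$ is inherited by arbitrary sums of $*$-products. This is precisely the content of Theorem \ref{Siannullano}, so the whole argument hinges on citing that result correctly; everything else is a formal inclusion-reversing manipulation.
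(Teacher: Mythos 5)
Your proof is correct and follows essentially the same route as the paper: part (1) is read off from Theorem \ref{StrongNull}, and part (2) combines the trivial containment $I\subseteq\mathcal J(\mathcal V(I))$ with the fact (via Theorem \ref{Siannullano} and Corollary \ref{idealiuguali}) that $\mathcal J(\mathcal V(I))$ consists exactly of polynomials vanishing on $\mathcal V(I)$. You also correctly flag the only delicate point, namely that the non-multiplicativity of pointwise evaluation is what makes Theorem \ref{Siannullano} necessary here.
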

\begin{proof}
The first statement is a direct consequence of the Strong Nullstellensatz \ref{StrongNull}.

\noindent For the second statement, thanks to Proposition \ref{Siannullano}, $\mathcal J (\mathcal{V}(I))$ coincides with the set of slice regular polynomials which vanish on $\mathcal{V}(I)$. 
Hence $\mathcal{V}(I) \subseteq \mathcal V(\mathcal J (\mathcal{V}(I)))$.
\noindent On the other hand,  any $P \in I$  vanishes on $\mathcal{V}(I)$. Hence $I \subseteq \mathcal J (\mathcal{V}(I))$ and therefore $\mathcal V(I)\supseteq \mathcal V(\mathcal J (\mathcal{V}(I)))$.
\end{proof}

\begin{remark}
If $V$ is not of the form $\mathcal V(I)$, then $\mathcal V (\mathcal J(V))$ does not coincide with $V$. Moreover it is not possible to establish an inclusion relation between the two sets:  
\begin{enumerate}
	\item the example mentioned in Remark \ref{ij} shows a case in which $\mathcal V(\mathcal J (V))\supsetneq V$;
	\item starting from the slice algebraic set $V=\{(i,j)\}\in \mathbb H^2$, we get  (see \cite{israeliani-1, Nul1}) that $\mathcal J (V)=\mathbb H[q_1,q_2]$ and hence $\mathcal V (\mathcal J(V))=\varnothing \subsetneq V$.
\end{enumerate}

\end{remark}

\section*{Declarations}
\noindent The authors have no competing interests to declare that are relevant to the content of this article.\\

%
%

\end{document}